\documentclass[11pt]{article}
\usepackage{amsthm,amsmath,mathrsfs,amssymb,mathtools,mathrsfs}
\usepackage{xcolor}
\usepackage{enumitem}
\usepackage{hyperref}
\usepackage[english]{babel}
\usepackage{graphicx,tikz}
\usepackage[hypcap]{caption}

\textheight 240mm
\textwidth 6.5in
\evensidemargin=0in
\oddsidemargin=0in
\topmargin=-0.9in

\parindent 20pt
\parskip 2pt

\theoremstyle{plain}
\newtheorem{theorem}{Theorem}[section]
\newtheorem{lemma}[theorem]{Lemma}

\newtheorem{corollary}[theorem]{Corollary}

\theoremstyle{definition}

\theoremstyle{remark}
\newtheorem{remark}[theorem]{Remark}
\numberwithin{equation}{section}

\setlist{nosep}

\newcommand*\samethanks[1][\value{footnote}]{\footnotemark[#1]}
\newcommand{\R}{\mathbb{R}}
\newcommand{\X}{\mathcal{X}}
\newcommand{\Y}{\mathcal{Y}}
\newcommand{\EX}{\mathcal{E}_{\mathcal{X}}}
\newcommand{\EY}{\mathcal{E}_{\mathcal{Y}}}
\newcommand{\mx}{M_1(\mathcal{X})}
\newcommand{\mnx}{M_1^N(\mathcal{X})}
\newcommand{\my}{M(\mathcal{Y})}
\newcommand{\DX}{D([0,T],\mx)}
\newcommand{\DY}{D_{\uparrow}([0,T],\my)}
\newcommand{\Prob}{{P}}
\newcommand{\E}{E}
\newcommand{\F}{\mathcal{F}}
\newcommand{\Ft}{\mathcal{F}_t}
\newcommand{\dcy}{D\mathcal{C}_{\Y}}
\newcommand{\dcx}{D\mathcal{C}_{\X}}

\title{Large deviations of mean-field interacting particle systems in a fast varying environment}
\author{Sarath Yasodharan\thanks{Supported by the Indo-French Centre for Applied Mathematics.} \thanks{Supported by a fellowship grant from the Centre for Networked Intelligence (a Cisco CSR initiative) of the Indian Institute of Science, Bangalore.} and Rajesh Sundaresan\samethanks[1]
\\ Indian Institute of Science}

\begin{document}
\maketitle 

\begin{abstract}
This paper studies large deviations of a ``fully coupled" finite state mean-field interacting particle system in a fast varying environment. The empirical measure of the particles evolves in the slow time scale and the random environment evolves in the fast time scale. Our main result is the path-space large deviation principle for the joint law of the empirical measure process of the particles and the occupation measure process of the fast environment. This extends previous results known for two time scale diffusions to two time scale mean-field models with jumps. Our proof is based on the method of stochastic exponentials. We characterise the rate function by studying a certain variational problem associated with an exponential martingale.  

\vspace{10pt}
\noindent \textbf{MSC 2010 subject classifications:} Primary 60F10; Secondary 60K37, 60K35, 60J75\\ 
\noindent \textbf{Keywords:} Mean-field interaction, large deviations, time scale separation, averaging principle, metastability 
\end{abstract}

\section{Introduction}
\label{section:introduction}
Let $\X, \Y$ be finite sets and $(\X, \EX)$ and $(\Y, \EY)$ be directed graphs on $\X$ and $\Y$ respectively. Let $\mx$ denote the space of probability measures on $\X$. For each $N \geq 1$, we consider Markov processes with infinitesimal generators acting on functions $f$ on $\mnx \times \Y$ of the form
\begin{align*}
\sum_{(x,x^\prime) \in \EX} & N \xi(x) \lambda_{x,x^\prime}(\xi, y) \left[ f\left(\xi+\frac{\delta_{x^\prime}}{N} -\frac{\delta_x}{N}, y \right)- f(\xi,y) \right] + N\sum_{y^\prime: (y,y^\prime) \in \EY} (f(\xi,y^\prime)-f(\xi,y))\gamma_{y,y^\prime}(\xi),
\end{align*}
$\xi \in \mnx$ and $y \in \Y$; here $\mnx \subset \mx$ denotes the set of probability measures on $\X$ that can arise as empirical measures of $N$-particle configurations on $\X^N$, $\lambda_{x,x^\prime}(\cdot, y) : \mx \to \R_+$, $(x,x^\prime) \in \EX$ and $y \in \Y$, and $\gamma_{y,y^\prime} : \mx \to \R_+$, $(y,y^\prime) \in \EY$, are given functions. Such processes arise in the context of weakly interacting Markovian mean-field particle systems in a fast varying environment where the empirical measure of the particle system evolves in the slow time scale and the environment process evolves in the fast time scale. An important feature of such processes is that they are ``fully coupled", i.e., the evolution of the empirical measure depends on the state of the environment, and the environment itself changes its state depending on the empirical measure of the particle system. This paper establishes a process-level large deviation principle (LDP) for the joint law of the empirical measure process and the occupation measure of the fast environment for such fully coupled two time scale mean-field models (see Section~\ref{section:system-model} for the precise mathematical model and Theorem~\ref{thm:main-finite-duration} for the statement of the main result).

Our study of the LDP for such a two time scale mean-field model is motivated by the metastability phenomenon in networked systems. Many networked systems that arise in practice can be modelled using a two time scale mean-field model; see Appendix~\ref{appendix:examples} for details of a retrial queueing system with $N$ orbit queues, and a wireless local area network with local interactions. In such networks, there could be multiple seemingly ``stable points of operation", or metastable points. Some of these may be desirable but some others undesirable in terms of some performance metrics. One is often interested in understanding the following metastable phenomena: (i) the mean time spent by the network near an operating point, (ii) the mean time required for transiting from one stable operating point to another, (iii) the mean time for the system to be sufficiently close to stationarity, etc. The process level large deviations result established in this paper helps to answer such questions on the large time behaviour of these systems.

The above two time scale mean-field model is an example of a stochastic process with time scale separation where a certain component of the process evolves in the slow time scale (i.e. $O(1)$-change in a given $O(1)$ time duration) and another component evolves in the fast time scale (i.e. $O(N)$-change in a given $O(1)$ time duration). Such processes that evolve on multiple time scales have been well studied in the past, and it is known that, under mild conditions, they exhibit the ``averaging principle": when the time scale separation $N$ becomes large, the slow component tracks the solution to a certain dynamical system whose driving function is  ``averaged" over the stationary behaviour of the fast component. In his seminal work, Khasminskii~\cite{khasminskii-68} first proved the averaging principle for two time scale diffusions. Freidlin and Wentzell~\cite[Chapter~7, Section~9]{freidlin-wentzell} studied the averaging phenomenon in a fully coupled system of diffusions where both the drift and the diffusion coefficients of the slow component depend on the fast component and vice-versa. Their proof is based on discretisation arguments. The averaging phenomenon has also been studied in the context of jump processes with applications to performance analysis of various computer communication systems and queueing networks -- Castiel et al.~\cite{castiel-etal-19} studied a carrier sense multiple access algorithm in the context of wireless networks, Bordenave et al.~\cite{bordenave-etal-12} studied  performance analysis of wireless local area networks, Hunt and Kurtz~\cite{hunt-kurtz-94} studied scaling limits of loss networks, Hunt and Laws~\cite{hunt-laws-97} studied analysis of trunk reservation policy in the context of loss networks; also see Kelly~\cite{kelly-91} and the references therein for other works on loss networks in the two time scale framework. While the above works on jump processes study the averaging principle in the large-$N$ limit, this paper focuses on process-level large deviations from the large-$N$ limit.

Various authors have studied process level large deviations of diffusion processes evolving on multiple time scales under various assumptions -- see Freidlin~\cite{freidlin-wentzell}, Veretennikov~\cite{veretennikov-99, veretennikov-00}, Liptser~\cite{liptser-96}, Puhalskii~\cite{puhalskii-16} and the references therein. Liptser~\cite{liptser-96} established the large deviation principle for the joint law of the slow process and the occupation measure of the fast process for one-dimensional diffusions when the fast process does not depend on the slow variable. More recently, Puhalskii~\cite{puhalskii-16} extended this for multidimensional diffusions when the slow and fast processes are fully coupled. His approach is based on the method of stochastic exponentials for large deviations~\cite{puhalskii-94}, where one identifies a suitable exponential martingale associated with the process and characterises the rate function in terms of this exponential martingale. In identifying the rate function, the main ingredient in the proof is to study a certain variational problem and show certain continuity property of its solution. 

In this paper, our proof of the process-level large deviation result is based on the method of stochastic exponentials, see Puhalskii~\cite{puhalskii-94,puhalskii-16}, but the main difficulty lies in extending the approach of Puhalskii~\cite{puhalskii-16} to our two time scale mean-field model with jumps. In particular, our setting requires us to study certain variational problems in an Orlicz space, instead of the usual $L^2$ space in the context of diffusions, to characterise the rate function; see Theorem~\ref{thm:varproblem} and Theorem~\ref{thm:muhat-bdd-away-from-0}. While Puhalskii~\cite{puhalskii-16} uses tools from the theory of elliptic partial differential equations for the characterisation of the rate function, we use tools from convex analysis and parametric continuity of optimisation problems. Also, our mean-field setting makes the solutions to these variational problems blow up near the boundary of the state space, and one of the main novelties of our work is the methodology to obtain a characterisation of the rate function in such cases via suitable approximations -- see Section~\ref{section:uniqueness-rate-function-whole-space}. 

Other works in the two time scale regime include Budhiraja et al.~\cite{budhiraja-etal-18} who studied the case where the slow process is a diffusion and the  fast process is a Markov chain on a finite set; their proof is based on the weak convergence approach to large deviations where one establishes the LDP by studying certain controlled versions of the processes. Kumar and Popovic~\cite{kumar-popovic-17} established the LDP for two time scale jump-diffusions under some general conditions via convergence of nonlinear semigroups, but their approach requires verification of the comparison principle for a certain nonlinear operator. While this is a possible alternative approach for the mean-field problem under consideration, we have used the more probabilistic stochastic exponentials approach.

Let us also mention some works on large deviations of mean-field models that do not involve the fast environment. Dawson and G\"artner~\cite{dawson-gartner-87} established process-level large deviations of interacting diffusions of mean-field type where each particle evolves as a diffusion process with coefficients that depend on the other particles via the empirical measure of the states of all the particles. L\'eonard~\cite{leonard-95-1,leonard-95} extended this to the case of jump processes. Our work can be viewed as an extension of L\'eonard~\cite{leonard-95} to the case of finite state mean-field interacting particle systems with a fully coupled fast varying environment. In the stationary regime, Borkar and Sundaresan~\cite{borkar-sundaresan-12} studied large deviations of the stationary measure of finite state mean-field interacting particle systems using tools from Freidlin and Wentzell~\cite[Chapter~6]{freidlin-wentzell}, and the authors~\cite{mypaper-1} studied large time behaviour, metastability and convergence to stationarity in such systems using tools from Hwang and Sheu~\cite{hwang-sheu-90}. Our results in this paper, along with the results in~\cite{mypaper-1}, can be used to study the large time behaviour and metastability of two time scale mean-field models; see Section~\ref{section:large-time-behaviour}.

The rest of the paper is organised as follows. We start with a formal description of our fully coupled two time scale mean-field model and state our main result and its implications in Section~\ref{section:system-model-main-results}. The proof of the main result is carried out in Sections~\ref{section:exponential-tightness}--\ref{section:complete-proof}. Section~\ref{section:exponential-tightness} establishes exponential tightness of the joint law of the empirical measure process and the occupation  measure process of the fast environment. In Section~\ref{section:itildemartingale}, we define a certain exponential martingale and show a necessary condition that holds for every subsequential rate function. In Section~\ref{section:variational-problem}, we define our candidate rate function using the above exponential martingale and study its relevant properties. In Section~\ref{section:uniqueness-rate-function}, we obtain a characterisation of subsequential rate functions for sufficiently regular elements in the space and Section~\ref{section:uniqueness-rate-function-whole-space} extends this to the whole space using certain approximation arguments. Finally we complete the proof of the main result in Section~\ref{section:complete-proof}.

\section{System model and main result}
\label{section:system-model-main-results}
\subsection{Notation}
We summarise the frequently used notation in the paper. Let $ \langle \cdot, \cdot \rangle$ denote inner product and $\| \cdot \| $ denote the norm on Euclidean spaces. Given a complete separable metric space $S$, let $B(S)$ denote the space of bounded Borel-measurable functions on $S$ equipped with the uniform topology. Let $M(S)$ denote the space of finite measures on $S$ equipped with the topology of weak convergence. Let $M_1(S)$ denote the space of probability measures on $S$ equipped with the L\'evy-Prohorov metric (which generates the topology of weak convergence). (If $S$ is a finite set, then $M_1(S)$ can be viewed as an $(|S|-1)$-dimensional subset of the Euclidean space $\mathbb{R}^{|S|}$; in this case, for $\nu \in M_1(S)$, we shall denote the density of $\nu$ with respect to the counting measure on $S$ by $\nu$). Given $N \in \mathbb{N}$, $M_1^N(S) \subset M_1(S)$ denotes the set of probability measures that can arise as empirical measures of $N$ independent $S$-valued random variables. Given $T > 0$, let $D([0,T], S)$  (resp.~$D(\mathbb{R}_+, S)$) denote the space of c\`adl\`ag functions on $[0,T]$ (resp.~$\mathbb{R}_+$) equipped with the Skorohod-$J_1$ topology (see, for example, Ethier and Kurtz~\cite[Chapter~3]{ethier-kurtz}). Similarly, given a finite set $\Y$, $ \DY \subset D([0,T], \my) $ denotes the space of c\`adl\`ag functions $\theta$ on $[0,T]$ such that for each $0 \leq s \leq t \leq T$, $\theta_t - \theta_s$ is an element of $\my$ and $\theta_t(\Y) =t$. This equipped with its subspace topology is a complete and separable metric space, and is closed in $D([0,T],\my)$. If $X$ is an element of  $D([0,T], S)$, $D([0,\infty), S)$ or $D_{\uparrow}([0,T], \my)$, let $X_t$ and $X(t)$ denote the coordinate projection of $X$ at time $t$.

Denote the moment generating function of the centred unit rate Poisson law by $\tau(u) \coloneqq e^u - u -1, u \in \mathbb{R}$, and its convex dual by
\begin{align*}
\tau^*(u) \coloneqq \left\{
\begin{array}{lll}
+\infty & \text{ if } u < -1 \\
1 & \text{ if } u = -1 \\
(u+1) \log (u+1) - u& \text{ if } u > -1.
\end{array}
\right.
\end{align*}
Given a complete separable metric space $S$ and a finite measure $\vartheta$ on $S$, let $L^\tau(S,\vartheta) $ and $L^{\tau^*}(S, \vartheta)$ denote the Orlicz spaces corresponding to the functions $\tau$ and $\tau^*$, respectively (see, for example, Rao and Ren~\cite[Chapter~3]{rao-ren} for an introduction to Orlicz spaces). The Orlicz norms on these spaces are denoted by $\|\cdot\|_{L^\tau(S,\vartheta)}$ and $\|\cdot\|_{L^{\tau^*}(S,\vartheta)}$, respectively. Given a directed and connected graph $(V,E)$ and $\Delta = (u,v) \in E$, let $u + \Delta$ denote $v$. Given a function $f$ on $[0,T] \times S \times V$, let $Df$ denote the function on $[0,T] \times S \times V  \times E$ defined by $f(t, s, u, \Delta) = f(t,s,v) - f(t,s,u)$ where $\Delta=(u, v) \in E$. Given a subset $W$ of a Euclidean space and $T >0$, let $C^{1,1}([0,T] \times W \times S)$ (resp.~$C^\infty([0,T] \times W \times S)$) denote the space of functions on $f(t, u, s)$, $(t, u, s) \in [0,T] \times W \times S$, that is continuously differentiable (resp.~infinitely differentiable) in both $t$ and $u$. For any function $X$ on $[0,T] \times S$, let $X_t(s)$ and $X(t, s)$ denote the evaluation of $X$ at $(t, s) \in [0,T] \times S$.

We finally recall the definition of a large deviation principle. Let $S$ be a metric space. We say that a sequence $\{X_N\}_{N \geq 1}$ of $S$-valued random variables defined on a probability space $(\Omega, \mathcal{F}, P)$ satisfies the large deviation principle (LDP) with rate function $I : S \to [0, +\infty]$ if 
\begin{itemize}
\item the lower level sets of $I$ are compact, i.e., for each $M > 0$, $\{x \in S : I(x) \leq M\}$ is a compact subset of $S$;
\item for each open set $G \subset S$,
\begin{align*}
\liminf_{N \to \infty} \frac{1}{N} \log P(X_N \in G) \geq -\inf_{x \in G} I(x);
\end{align*}
\item for each closed set $F \subset S$,
\begin{align*}
\limsup_{N \to \infty} \frac{1}{N} \log P(X_N \in F) \leq -\inf_{x \in F} I(x).
\end{align*}
\end{itemize}
We say that $I : S \to [0, +\infty]$ is a subsequential rate function for the family $\{X_N\}_{N \geq 1}$ if there exists a subsequence $\{N_k\}_{k \geq 1}$ of $\mathbb{N}$ such that the sequence $\{X_{N_k}\}_{k \geq 1}$ satisfies the large deviation principle with rate function $I$.
\subsection{System model}
\label{section:system-model}
We describe our model of the mean-field interacting particle system in a fast environment. Let there be $N$ particles and an environment. There is a state associated with each particle as well as the environment at all times; the particle states come from a finite set $\X$ and the environment state comes from a finite set $\Y$. The state of the $n$th particle at time $t$ is denoted by $X_n^N(t) \in \X$, and the state of the environment at time $t$ is denoted by $Y_N(t) \in \Y$. To describe the evolution of the states of the particles, we consider a directed graph $(\X , \EX)$ on the vertex set $\X$ with the interpretation that whenever $(x,x^\prime) \in \EX$, a particle at state $x$ can transit to state $x^\prime$. Similarly, to describe the evolution of the environment, we consider a directed graph $(\Y, \EY)$; $(y,y^\prime) \in \EY$ implies that the environment can transit from state $y$ to state $y^\prime$. 

To describe the particle transitions, we define, for each $y \in \Y$ and $(x,x^\prime) \in \EX$, a function $\lambda_{x,x^\prime}(\cdot,y) : M_1(\X) \to \mathbb{R}_+$, and for each $y \in \Y$, we consider the generator $Q_{N,y}$ acting on functions on $\X^N$ by
\begin{align*}
Q_{N,y} f(\mathbf{x}^N) = \sum_{n=1}^N \sum_{x_n^\prime: (x_n,x_n^\prime) \in \EX} \lambda_{x_n, x_n^\prime}(\overline{\mathbf{x}^N}, y) (f(\mathbf{x}^N_{n,x_n,x_n^\prime}) - f(\mathbf{x}^N)),
\end{align*}
where $\overline{\mathbf{x}^N} \coloneqq \frac{1}{N}\sum_{n=1}^N \delta_{x_n}$ denotes the empirical measure associated with the configuration $\mathbf{x}^N$, and $\mathbf{x}^N_{n, x_n, x_n^\prime}$ denotes the resultant configuration of particles when the $n$th particles changes its state from $x_n$ to $x_n^\prime$ in $\mathbf{x}^N$.  To describe the transitions of the environment, for each $(y, y^\prime) \in \Y$, we define a function $\gamma_{y,y^\prime}(\cdot): M_1(\X) \to \mathbb{R}_+ $, and for each $\xi \in M_1(\X)$, we consider the generator $L_\xi$ acting on functions on $\Y$ by
\begin{align*}
L_\xi g(y) = \sum_{y^\prime: (y,y^\prime) \in \EY} (g(y^\prime)-g(y))\gamma_{y,y^\prime}(\xi).
\end{align*}
Finally,  we consider the generator $\Psi_N$ acting on functions $f$ on $\X^N \times \Y$ by
\begin{align*}
\Psi_N f(\mathbf{x}^N, y) = Q_{N, y}  f(\cdot, y)(\mathbf{x}^N) +N  L_{\overline{\mathbf{x}^N}} f(\mathbf{x}^N, \cdot) (y),
\end{align*}
where $Q_{N, y}  f(\cdot, y)(\mathbf{x}^N)$ (resp.~$L_{\overline{\mathbf{x}^N}} f(\mathbf{x}^N, \cdot) (y)$) indicates that the operator $Q_{N,y}$ (resp.~$L_{\overline{\mathbf{x}^N}}$) acts on the first variable (resp.~second variable) of $f$ and the resultant function is evaluated at $\mathbf{x}^N$ (resp.~$y$).

We make the following assumptions on the particle system:
\begin{enumerate}[label=({A\arabic*})]
\item The graph $(\X, \EX)$ is irreducible; \label{assm:a1}
\item For each $y \in \Y$ and $(x,x^\prime) \in \EX$, the function $\lambda_{x,x^\prime}(\cdot, y)$ is Lipschitz continuous on $M_1(\mathcal{X})$ and $\inf_{\xi \in \mx} \lambda_{x,x^\prime}(\xi, y) > 0$; \label{assm:a2}
\end{enumerate}
and the following assumptions on the environment:
\begin{enumerate}[label=({B\arabic*})]
\item The graph $(\Y, \EY)$ is irreducible; \label{assm:b1}
\item For each $(y,y^\prime) \in \EY$, the function $\gamma_{y,y^\prime}(\cdot)$ is continuous on $\mx$ and $\inf_{\xi \in \mx} \gamma_{y,y^\prime}(\xi) > 0$. \label{assm:b2}
\end{enumerate}

As a consequence of the assumptions~\ref{assm:a2} and~\ref{assm:b2}, we see that the transition rates of the particles as well as that of the environment are bounded, i.e., 

\begin{align*}
\sup_{\xi \in M_1(\X)}\lambda_{x,x^\prime}(\xi,y) < +\infty \,  \forall \,  (x,x^\prime) \in \EX \text{ and } \forall \, y \in \Y
\end{align*}
and
\begin{align*}
 \sup_{\xi \in M_1(\X)} \gamma_{y,y^\prime}(\xi) < +\infty  \, \forall \,  (y,y^\prime) \in \EY,
\end{align*}
and hence the $D([0,\infty), \X^N \times \Y)$-valued martingale problem for $\Psi_{N}$ is well-posed (see, for example, Ethier and Kurtz~\cite[Section~4.1,~Exercise~15]{ethier-kurtz}). Therefore, given an initial configuration of particles $(X_n^N(0), 1 \leq n \leq N) \in \X^N$ and an initial state of the environment $Y_N(0) \in \Y$, we have a Markov process $\{((X_n^N(t),1 \leq n \leq N), Y_N(t)), t \geq 0 \}$ whose sample paths are elements of $D([0,\infty),\X^N \times \Y)$.

To describe the process $\{((X_n^N(t),1 \leq n \leq N), Y_N(t)), t \geq 0 \}$  in words, consider the mapping 
\begin{align*}
\{((X_n^N(t),1 \leq n \leq N), & Y_N(t)), t \geq 0 \}  \mapsto \ \left\{\frac{1}{N}\sum_{n=1}^N \delta_{X_n^N(t)}, t \geq 0\right\} \\
&  \eqqcolon \{\mu_N(t), t \geq 0\} \in D([0,\infty),M_1^N(\X))
\end{align*}
that takes the process $\{((X_n^N(t),1 \leq n \leq N), Y_N(t)), t \geq 0 \}$ and maps it to the empirical measure process $\{\mu_N(t), t\geq 0\}$. Note that, if the environment were frozen to be $y$, then $\mu_N$ is Markov with infinitesimal generator
\begin{align*}
\Phi_{N,y}f(\xi) = \sum_{(x,x^\prime) \in \EX} N \xi(x) \lambda_{x,x^\prime}(\xi, y) \left[ f\left(\xi+\frac{\delta_{x^\prime}}{N} -\frac{\delta_x}{N} \right)- f(\xi) \right].
\end{align*}
We see that a particle in state $x$ at time $t$ makes a transition to state $x^\prime$ at rate $\lambda_{x,x^\prime}(\mu_N(t), Y_N(t))$ independent of everything else. Similarly, the environment makes a transition from state $y$ to $y^\prime$ at time $t$  at rate $N\gamma_{y,y^\prime}(\mu_N(t))$ independent of everything else. Thus, the evolution of each particle depends on the empirical measure of the states of all the particles and the environment, and the evolution of the environment depends on the empirical measure of the states of all the particles. Note that the factor $N$ in the second term of the generator $\Psi_N$ indicates that the process $Y_N$ makes $O(N)$ many transitions while each particle makes $O(1)$ transitions in a given $O(1)$ duration of time. Therefore, we have a ``fully coupled" system where the particles evolve in a fast varying environment. Also, the empirical measure process $\mu_N$ makes $O(N)$ transitions over a given duration of time, but each of those transitions are of size $O(1/N)$ on the probability simplex $M_1(\X)$. We shall refer to $\mu_N$ as the slow process and $Y_N$ as the fast process.
\begin{remark}
Throughout the paper, we assume that all stochastic processes are defined on a complete filtered probability space $(\Omega, \F, (\Ft)_{t \geq 0}, \Prob)$. We denote integration with respect to $\Prob$ by $\E$.
\end{remark}

Fix $T > 0$. We now describe the typical behaviour of our two time scale mean-field system for large $N$ over the time duration $[0,T]$. Towards this, we define the occupation measure of the fast process $Y_N$ by
\begin{align*}
\theta_N(t) \coloneqq \int_0^t 1_{\{Y_N(s) \in \cdot\}} ds,\,  0 \leq t \leq T.
\end{align*}
Note that $\theta_N \in \DY$, $\theta_{N,t}(\Y) =t$ and we can view $\theta_N$ as a measure on $[0,T] \times \Y$. For a fixed empirical measure of the particles $\xi \in M_1(\X)$, assumptions~\ref{assm:b1} and~\ref{assm:b2} imply that there exists a unique invariant probability measure for the Markov process on $\Y$ with infinitesimal generator $L_\xi$ (we denote this by $\pi_\xi$). Therefore, when the empirical measure at time $t$ is at a fixed state $\mu_t$, since the fast process $Y_N$ makes $O(N)$ transitions, we expect that the occupation measure of $Y_N$ for large $N$ becomes ``close" to $\pi_{\mu_t}$, the unique invariant probability measure associated with $L_{\mu_t}$. Due to this ergodic behaviour of the fast process, we anticipate that a particle in state $x$ at time $t$ moves to state $x^\prime$, where $(x, x^\prime) \in \EX$, at rate $\int_\Y  \lambda_{x,x^\prime}(\mu_t, y) \pi_{\mu_t}(dy)$, i.e., the average of $\lambda_{x,x^\prime}(\mu_t, \cdot)$ over $\pi_{\mu_t}$ (for any $\xi \in \mx$, $(x,x^\prime) \in \EX$ and $m \in M_1(\Y)$, we define $\bar{\lambda}_{x,x^\prime}(\xi, m) \coloneqq \int_\Y  \lambda_{x,x^\prime}(\xi, y) m(dy)$).

More precisely, for large enough $N$, we anticipate the following averaging principle for the empirical measure process $\mu_N$. If we assume that the initial conditions $\mu_N(0) \to \nu$ weakly for some deterministic element $\nu \in \mx$, then we anticipate that $\mu_N$ converges in probability, in $\DX$, to the solution to the McKean-Vlasov ODE
\begin{align}
\dot{\mu}_t = \bar{\Lambda}_{\mu_t,\pi_{\mu_t}}^* \mu_t, \, t \geq 0, \, \mu_0 = \nu,
\label{eqn:mve}
\end{align}
where $\bar{\Lambda}_{\mu_t, \pi_{\mu_t}}$ denotes the $|\X| \times |\X|$ rate matrix of the slow process when the empirical measure is $\mu_t$ and the occupation measure of the fast process is $\pi_{\mu_t}$, i.e., $\bar{\Lambda}_{\mu_t, \pi_{\mu_t}}(x,x^\prime) = \bar{\lambda}_{x,x^\prime}(\mu_t, \pi_{\mu_t})$ when $(x,x^\prime) \in \EX$, $\bar{\Lambda}_{\mu_t, \pi_{\mu_t}}(x,x^\prime)=0$ when $(x,x^\prime) \notin \EX$,  $\bar{\Lambda}_{\mu_t, \pi_{\mu_t}}(x,x) = -\sum_{x^\prime \neq x} \bar{\lambda}_{x,x^\prime}(\mu_t, \pi_{\mu_t})$,  and $\bar{\Lambda}_{\mu_t,\pi_{\mu_t}}^* $ denotes its transpose. Note that the above ODE is well-posed, thanks to the Lipschitz assumption on the transition rates~\ref{assm:a2}. See Bordenave et al.~\cite{bordenave-etal-12} for the study of averaging phenomena of a slightly general two time scale model in which each particle has a fast varying environment associated with it. 
\subsection{Main result}
Our main result is on the large deviations of $\{(\mu_N,\theta_N)\}_{N \geq 1}$, the joint empirical measure process associated with the particle system and the occupation measure process associated with the environment $Y_N$, on $\DX \times \DY$. Our main result is the following theorem.
\begin{theorem}
Assume~\ref{assm:a1},~\ref{assm:a2},~\ref{assm:b1}, \ref{assm:b2}, and fix $T> 0$. Suppose that $\{\mu_N(0)\}_{N \geq 1}$ satisfies the LDP on $\mx$ with rate function $I_0$. Then the sequence $\{(\mu_N(t), \theta_N(t)), 0 \leq t \leq T\}_{N \geq 1}$ satisfies the LDP on $D([0,T], \mx) \times D_{\uparrow}([0,T], \my)$ with rate function
\begin{align*}
I(\mu, \theta) \coloneqq I_0(\mu(0)) +J (\mu, \theta),
\end{align*}
where $J$ is defined by
\begin{align}
J(\mu, \theta) &\coloneqq \int_{[0,T]} \left\{ \sup_{\alpha \in \mathbb{R}^{|\X|}} \bigg( \left\langle \alpha, (\dot{\mu}_t -  \bar{\Lambda}^*_{\mu_t, m_t} \mu_t)  \right\rangle \right. \nonumber \\
& \qquad \qquad \left. - \int_{\X \times \EX} \tau(D\alpha(x,\Delta)) \bar{\lambda}_{x,x+d\Delta}(\mu_t, m_t) \mu_t(dx) \bigg) \right. \nonumber\\ 
& \qquad  + \left.  \sup_{g \in B(\Y)} \int_\Y  \biggl( -L_{\mu_t}g (y) \right.  \nonumber \\
&\qquad \qquad - \left. \int_{\EY}\tau(Dg(y,\Delta)) \gamma_{y,y+d\Delta}(\mu_t) \biggr) m_t(dy) \right\} dt
\label{eqn:rate-fn}
\end{align}
whenever the mapping  $[0,T] \ni t \mapsto \mu_t \in \mx$ is absolutely continuous and $\theta$, when viewed as a measure on $[0,T] \times \Y$, admits the representation $\theta(dt dy) = m_t(dy) dt$ for some $m_t \in M_1(\Y)$ for almost all $t\in [0,T]$, and $J(\mu, \theta) = + \infty$ otherwise.
\label{thm:main-finite-duration}
\end{theorem}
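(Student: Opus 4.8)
The plan is to follow the method of stochastic exponentials in the spirit of Puhalskii~\cite{puhalskii-94,puhalskii-16}, combining it with the averaging structure of the fast environment. First I would reduce the full LDP to two separate tasks: (i) establishing exponential tightness of $\{(\mu_N,\theta_N)\}_{N\ge1}$ on $\DX\times\DY$, and (ii) identifying every subsequential rate function with $I$. Task~(i) is handled in Section~\ref{section:exponential-tightness}; it follows from the boundedness of the jump rates (consequences of \ref{assm:a2}, \ref{assm:b2}) together with the fact that $\theta_N$ lives in the compact-friendly space $\DY$ (its total mass at time $t$ is fixed to be $t$, so tightness of occupation measures is essentially automatic and one only needs control of the modulus of continuity of $\mu_N$). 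Given exponential tightness, along any subsequence a further subsequence satisfies an LDP with some rate function $\tilde I$, and the goal becomes to show $\tilde I = I$. For this I would work with the exponential martingale: for suitable test functions $\alpha\in C^{1,1}([0,T]\times\mx)$ (acting on the slow variable) and $g\in C^{1,1}([0,T]\times\mx\times\Y)$ (acting on the fast variable), the quantity obtained by exponentiating $N$ times the Dynkin martingale associated with $\Psi_N$, and compensating by the $\tau$-terms that arise from the Poissonian nature of the jumps, is a mean-one martingale. Taking expectations and applying Varadhan-type arguments (as encapsulated in Section~\ref{section:itildemartingale}) yields, for every subsequential rate function $\tilde I$ and every admissible $(\mu,\theta)$ with $\theta(dt\,dy)=m_t(dy)\,dt$, the necessary lower bound
\begin{align*}
\tilde I(\mu,\theta) \ge I_0(\mu(0)) + \text{(the two supremum-integrals defining $J$)},
\end{align*}
where the $g$-supremum is first taken over the smaller class $C^{1,1}$ and then, by a density/approximation argument in the relevant Orlicz space $L^\tau(\Y, m_t\,dt)$, extended to all of $B(\Y)$. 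This gives $\tilde I \ge I$.

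The reverse inequality $\tilde I \le I$ is the substantive part and is where the variational analysis of Sections~\ref{section:variational-problem}--\ref{section:uniqueness-rate-function-whole-space} enters. Here the strategy is: for a fixed admissible $(\mu,\theta)$ with $J(\mu,\theta)<\infty$, one must exhibit, for each $t$, the optimisers in the two inner variational problems and use them to build a change of measure under which $(\mu_N,\theta_N)$ concentrates near $(\mu,\theta)$ with the correct exponential cost. Concretely, the inner sup over $\alpha$ is a finite-dimensional convex problem whose dual describes $\dot\mu_t$ as produced by a tilted jump rate of the form $\bar\lambda_{x,x+d\Delta}(\mu_t,m_t)\,e^{D\alpha_t(x,\Delta)}$; the inner sup over $g\in B(\Y)$ is, for fixed $\mu_t$, exactly the Donsker--Varadhan-type rate for the empirical measure of the $\Y$-chain with generator $L_{\mu_t}$ to be close to $m_t$, and its value is finite precisely when $m_t$ is reachable by a tilting of the $\Y$-dynamics. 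The key regularity input, supplied by Theorem~\ref{thm:varproblem} and Theorem~\ref{thm:muhat-bdd-away-from-0}, is that these optimisers can be chosen to depend measurably (indeed with suitable continuity/parametric-continuity properties) on $t$, so that the resulting tilted generator defines a legitimate inhomogeneous Markovian change of measure; one then invokes the lower-bound half of the stochastic-exponentials machinery (a Girsanov/entropy computation) to match $I(\mu,\theta)$ as an upper bound for $\tilde I(\mu,\theta)$.

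The main obstacle, as the introduction flags, is that the mean-field structure forces the optimiser $m_t$ (equivalently $\pi_{\mu_t}$ in the zero-cost case, and its tilted analogues otherwise) to blow up near the boundary $\partial\mx$: as $\mu_t$ approaches a face of the simplex, some coordinates of the tilting and of the associated densities become unbounded, and the naive change of measure is not well-defined. The remedy is the approximation scheme of Section~\ref{section:uniqueness-rate-function-whole-space}: one first proves the matching upper bound for ``regular'' paths $(\mu,\theta)$ that stay in the interior and are bounded away from the boundary (Section~\ref{section:uniqueness-rate-function}), where the variational problems live in a nice Orlicz space and the optimisers are genuinely bounded; then one approximates an arbitrary finite-cost path by such regular paths, controlling the cost $J$ along the approximation using lower semicontinuity of $J$, the convexity of $\tau$ and $\tau^*$, and continuity of $\xi\mapsto\pi_\xi$ and $\xi\mapsto\bar\lambda(\xi,\cdot)$ on compact subsets of the interior. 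Passing to the limit, using the already-established lower bound $\tilde I\ge I$ together with the goodness of $I$ (compact level sets, which itself requires the Orlicz-space coercivity estimates), pins down $\tilde I=I$ on all of $\DX\times\DY$. Finally, since every subsequential rate function equals $I$ and the family is exponentially tight, the full sequence satisfies the LDP with rate function $I$, completing the proof.
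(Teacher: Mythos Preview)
Your high-level plan is correct and tracks the paper's structure: exponential tightness (Section~\ref{section:exponential-tightness}), then identification of every subsequential rate function via the exponential-martingale identity (Section~\ref{section:itildemartingale}). The direction $\tilde I\ge I^*$ follows exactly as you say, from Varadhan's lemma applied to the mean-one exponential martingale, and the boundary-approximation scheme you sketch matches Section~\ref{section:uniqueness-rate-function-whole-space} in spirit.

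The substantive divergence is your mechanism for the reverse inequality $\tilde I \le I^*$ at regular $(\hat\mu,\hat\theta)$. You propose to use the optimisers $\hat\alpha_t,\hat g_t$ to build a tilted inhomogeneous Markov law under which $(\mu_N,\theta_N)$ concentrates near $(\hat\mu,\hat\theta)$, and then compute a Girsanov/entropy cost. The paper does \emph{not} construct any change of measure. Instead, it exploits the identity $\sup_{(\mu,\theta)}\bigl(U_T^{\alpha,g}(\mu,\theta)-\tilde I(\mu,\theta)\bigr)=0$ in a dual way: it constructs $\hat\alpha_t(u)$ and $\hat g_t(u)$ as functions of a \emph{parameter} $u\in\mx$ (continuity in $u$ coming from Berge's maximum theorem), plugs these into the extended identity of Theorem~\ref{thm:u-extension}, and argues that the supremum---now attained on a compact sublevel set $K_\delta$---can only be attained at $(\hat\mu,\hat\theta)$. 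The last step is a uniqueness argument: the first-order condition for $\hat g_t(u)$ says that $\hat m_t$ is the unique invariant measure of a tilted $\Y$-chain, forcing $\tilde m_t=\hat m_t$; the first-order condition for $\hat\alpha_t(u)$, by construction, pins the right-hand side to $\dot{\hat\mu}_t$ regardless of $u$, so Gronwall forces $\tilde\mu=\hat\mu$. This yields $\tilde I(\hat\mu,\hat\theta)=U_T^{\hat\alpha,\hat g}(\hat\mu,\hat\theta)=I^*(\hat\mu,\hat\theta)$ directly, with no LLN under a tilted law.

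Your change-of-measure route could in principle work, but it is a genuinely different argument and carries its own burdens (well-posedness of the two-scale tilted dynamics, a concentration estimate for the tilted fast occupation measure uniform along $\hat\mu$). What the paper's approach buys is that no probabilistic construction is needed beyond the single exponential-martingale identity; everything else is analytic (convex optimisation, parametric continuity, ODE/invariant-measure uniqueness). One further mismatch: in the approximation step you invoke lower semicontinuity of $J$ and goodness of $I$, but the paper instead proves the explicit convergence $I^*(\hat\mu^i,\hat\theta^i)\to I^*(\hat\mu,\hat\theta)$ by hand for carefully built piecewise-constant-velocity approximants---this is essential, since one needs $\tilde I(\hat\mu^i,\hat\theta^i)=I^*(\hat\mu^i,\hat\theta^i)$ along the sequence and then combines it with lower semicontinuity of $\tilde I$ (not of $J$) to conclude.
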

\label{Model and main results}
Note that our rate function consists of two parts -- one corresponding to the empirical measure process $\mu_N$ and the other corresponding to the occupation measure of the fast process $Y_N$. The form of the first part of the rate function in~(\ref{eqn:rate-fn}) corresponding to the empirical measure process $\mu_N$ appears in the literature on large deviations of mean-field models (see L\'eonard~\cite[Theorem~3.3]{leonard-95},~\cite[Theorem~1]{djehiche-kaj-95}). The form of the second part is related to the rate function that appears in the study of occupation measure of Markov processes (see Donsker and Varadhan~\cite[Theorem~1]{donsker-varadhan-75-1}). Here, the canonical form of the rate function is $\int_{[0,T]} \sup_{h > 0}  \int_{\Y} -\frac{L_{\mu_t} h(y)}{h(y)} m_t(dy) dt $  and this form of the second part of our rate function in~(\ref{eqn:rate-fn}) can be obtained by taking supremum over functions of the form $e^g$, $g \in B(\Y)$. We see that the first part of the rate function corresponding to the empirical measure process $\mu_N$ has parameters of the mean-field model ``averaged" by the fast variable. Further the second part corresponding to the occupation measure of the fast process has parameters ``frozen" at the current value of the slow variable. The form of our rate function is similar in spirit to that obtained by Puhalskii~\cite{puhalskii-16} in the case of coupled diffusions.

Note that, when $\mu$ is the solution to the McKean-Vlasov equation~(\ref{eqn:mve}) starting at $\mu(0)$ and $\theta$, when viewed as a measure on $[0,T] \times \Y$, is given by $\theta(dy dt) = \pi_{\mu_t}(dy) dt$ where $\pi_{\mu_t}$ is the unique invariant probability measure associated with the infinitesimal generator $L_{\mu_t}$, it is easy to see that the suprema in~(\ref{eqn:rate-fn}) are attained at the identically $0$ functions $\alpha \equiv 0$ and $g \equiv 0$ and hence $J(\mu, \theta) = 0$. Therefore, we recover the typical behaviour of our fully coupled system -- at each time $t>0$, the empirical measure process $\mu_N$ tracks the solution to the McKean-Vlasov equation $\mu_t$ starting at $\mu(0)$ and the occupation measure of the fast process $\theta_N$ tracks the invariant probability measure of the fast process $Y_N$ when the empirical measure is frozen at $\mu_t$. Our result on the large deviations of the joint empirical measure process and the occupation measure of the fast process $\{(\mu_N, \theta_N)\}$ enables us to estimate the probabilities of two kinds of deviations from the typical behaviour -- one where, for a given $\mu$, the occupation measure of the fast process deviates from its typical behaviour (which at time $t$ is $\pi_{\mu_t}(dy)dt$) and the other where $\mu$ deviates from its typical behaviour (which is the solution to~(\ref{eqn:mve}) starting at $\mu(0)$).

We now provide an outline of the proof of Theorem~\ref{thm:main-finite-duration}. Our proof is broadly built upon the methodology of stochastic exponentials for large deviations by Puhalskii~\cite{puhalskii-94,puhalskii-01,puhalskii-16}, where one shows the large deviation principle by first obtaining an equation for a subsequential rate function in terms of a suitable exponential martingale and then obtaining a characterisation of this subsequential rate function. Towards this, we first show that the sequence $\{(\mu_N, \theta_N)\}_{N \geq 1}$ is exponentially tight in $\DX \times \DY$ (see Theorem~\ref{thm:exp-tightness}); this is shown using standard martingale arguments and Doob's inequality. Exponential tightness of the sequence $\{(\mu_N,\theta_N)\}_{N \geq 1}$ implies that there exists a subsequence $\{N_k\}_{k \geq 1}$ of $\mathbb{N}$ such that the family $\{(\mu_{N_k},\theta_{N_k})\}_{k \geq 1}$ satisfies the LDP (see, for example, Dembo and Zeitouni~\cite[Lemma~4.1.23]{dembo-zeitouni}); let $\tilde{I}$ denote the rate function that governs the LDP for the family $\{(\mu_{N_k},\theta_{N_k})\}_{k \geq 1}$. In Sections~\ref{section:itildemartingale}-\ref{section:uniqueness-rate-function-whole-space}, we obtain a characterisation of $\tilde{I}$ when $\tilde{I}$ is such that, for some $\nu \in \mx$, $\tilde{I}(\mu, \theta) = +\infty$ unless $\mu_0 = \nu$; specifically we show that $\tilde{I}(\mu, \theta)$ is given by the right hand side of~(\ref{eqn:rate-fn}). In some more detail, in Section~\ref{section:itildemartingale}, we define an exponential martingale associated with the Markov process $(\mu_N,Y_N)$ for a class of functions $ \alpha : [0,T]\times \mx \to \mathbb{R}^{|\X|}$ and $g : [0,T] \times \mx \times \Y \to \mathbb{R}$ with certain properties, and we obtain an equation that the rate function $\tilde{I}$ must satisfy in terms of this exponential martingale (see Theorem~\ref{thm:rate-function-martingale}). In Section~\ref{section:variational-problem}, we define our candidate rate function $I^*$ in terms of this exponential martingale as a variational problem over functions $\alpha$ and $g$, and we then show that $I^*$ coincides with the RHS of~(\ref{eqn:rate-fn}), and provide a nonvariational expression for $I^*$ using elements from suitable Orlicz spaces (see~Theorem~\ref{thm:varproblem}).  In Section~\ref{section:uniqueness-rate-function}, using the properties of the solution to the variational problem established in Section~\ref{section:variational-problem} and an extension of the equation of $\tilde{I}$ to a larger class of functions $\alpha$ and $g$, we are able to obtain a characterisation of the rate function $\tilde{I}$ for sufficiently regular elements in $\DX \times \DY$ (see~Theorem~\ref{thm:muhat-bdd-away-from-0}). In Section~\ref{section:uniqueness-rate-function-whole-space}, we extend the above characterisation of $\tilde{I}$ to the whole space $\DX \times \DY$ via certain approximation arguments. We finally complete the proof of Theorem~\ref{thm:main-finite-duration} in Section~\ref{section:complete-proof}, by removing the restriction that, for some $\nu \in \mx$, $\tilde{I}(\mu, \theta) =+\infty$ unless $\mu_0 = \nu$.

Our setting of mean-field interaction with jumps introduces some difficulties in characterising a subsequential rate function. One of them is in obtaining regularity properties of the solution to the variational problem  appearing in the definition of $J(\mu, \theta)$ in~(\ref{eqn:rate-fn}) when $(\mu, \theta)$ possesses some good properties. In the recent work of Puhalskii~\cite{puhalskii-16} on large deviations of fully coupled diffusions, the author uses tools from the theory of elliptic partial differential equations for this purpose whereas we resort to tools from convex analysis (L\'eonard~\cite[Sections~4-6]{leonard-95-1}) and parametric continuity of optimisation problems (Sundaram~\cite[Chapter~9]{sundaram-optimisation}) -- see Theorem~\ref{thm:varproblem} and Theorem~\ref{thm:muhat-bdd-away-from-0}. Also, unlike in the case of Gaussian noise in Puhalskii~\cite{puhalskii-16}, our Poissonian noise prevents us from obtaining an explicit form of the solution to the variational problem appearing in the rate function~(\ref{eqn:rate-fn}). Yet another difficulty is in obtaining a characterisation of $\tilde{I}(\mu, \theta)$ when the path $\mu$ hits the boundary of $\mx$. In such cases, the solution to the variational problem that appears in~(\ref{eqn:rate-fn}) blows up near the boundary and hence the condition on $\tilde{I}$ established in Theorem~\ref{thm:u-extension} cannot be directly used. We demonstrate how to approximate $(\mu, \theta)$ via a sequence of regular elements $\{(\mu^i, \theta^i)\}_{i \geq 1}$ so that the solution to the variational problem in $J(\mu^i, \theta^i)$ is well-behaved. We can then use the conclusion of Theorem~\ref{thm:u-extension} on the above sequence and show that $\tilde{I}(\mu^i, \theta^i) \to \tilde{I}(\mu, \theta)$ as $i \to \infty$; see Theorem~\ref{thm:istar=itilde}.

\subsubsection{Marginal $\mu_N$}The above result on large deviations of the joint law of the empirical measure process of the particles and the occupation measure of the fast process enables us to easily obtain large deviations of the empirical measure process $\mu_N$ by using the contraction principle (see, for example, Dembo and Zeitouni~\cite[Theorem~4.2.1]{dembo-zeitouni}).
\begin{corollary}Assume~\ref{assm:a1},~\ref{assm:a2},~\ref{assm:b1}, \ref{assm:b2}, and fix $T >0$. Suppose that $\{\mu_N(0)\}_{N \geq 1}$ satisfies the LDP in $\mx$ with rate function $I_0$. Then $\{\mu_N\}_{N \geq 1}$ satisfies the LDP in $\DX$ with rate function $J_T$ defined as follows. If $[0,T] \ni t \mapsto \mu_t$ is absolutely continuous, then
\begin{align*}
J_T(\mu) &=  I_0(\mu_0) + \int_{[0,T]} \left\{ \sup_{\alpha \in \mathbb{R}^{|\X|}} \bigg(  \langle \alpha, \dot{\mu}_t\rangle - \sup_{m \in M_1(\Y)} \biggr[ \langle \alpha,  \bar{\Lambda}^*_{\mu_t, m} \mu_t \rangle \right. \nonumber \\
& \qquad \qquad \qquad \left. + \int_{\X \times \EX} \tau(D\alpha(x,\Delta)) \bar{\lambda}_{x,x+d\Delta}(\mu_t, m) \mu_t(dx) \bigg) \right. \nonumber\\ 
& \qquad \qquad \qquad - \left.  \sup_{g \in B(\Y)}  \int_\Y \biggl( -L_{\mu_t}g (y) - \int_{\EY}\tau(Dg(y,\Delta)) \gamma_{y,y+d\Delta}(\mu_t) \biggr) m(dy) \biggr] \right\} dt,
\end{align*}
where $\theta$, when viewed as a measure on $[0,T]\times \Y$, admits the representation $\theta(dy dt) = m_t(dy) dt$ for some $m_t \in M_1(\Y)$ for almost all $t \in [0,T]$, and $J_T(\mu) = +\infty$ otherwise.
\label{cor:ldp-mun}
\end{corollary}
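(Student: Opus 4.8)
The plan is to derive Corollary~\ref{cor:ldp-mun} from Theorem~\ref{thm:main-finite-duration} by the contraction principle, and then to evaluate explicitly the infimum over the fast-variable component that the contraction produces. The coordinate projection $\Pi\colon \DX\times\DY\to\DX$, $\Pi(\mu,\theta)=\mu$, is continuous, and $I$ is a good rate function by Theorem~\ref{thm:main-finite-duration}; hence the contraction principle (Dembo and Zeitouni~\cite[Theorem~4.2.1]{dembo-zeitouni}) shows that $\{\mu_N\}_{N\ge 1}$ satisfies the LDP on $\DX$ with rate function $J_T(\mu)=\inf_{\theta\in\DY}I(\mu,\theta)=I_0(\mu(0))+\inf_{\theta\in\DY}J(\mu,\theta)$. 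If $t\mapsto\mu_t$ is not absolutely continuous then $J(\mu,\theta)=+\infty$ for every $\theta$, so $J_T(\mu)=+\infty$, and we may assume $\mu$ absolutely continuous. By the definition of $J$ in~(\ref{eqn:rate-fn}), every $\theta$ that does not disintegrate as $\theta(dt\,dy)=m_t(dy)\,dt$ for some measurable family $t\mapsto m_t\in M_1(\Y)$ contributes $+\infty$, so the infimum over $\theta$ becomes an infimum over such families, and $J(\mu,\theta)=\int_0^T F(t,m_t)\,dt$ where $F(t,m):=A(t,m)+B(t,m)$, with $A(t,m)$ the $\sup_\alpha$ term of~(\ref{eqn:rate-fn}) evaluated with $m_t=m$, and $B(t,m):=\sup_{g\in B(\Y)}\int_\Y\bigl(-L_{\mu_t}g(y)-\int_{\EY}\tau(Dg(y,\Delta))\gamma_{y,y+d\Delta}(\mu_t)\bigr)m(dy)$.

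The next step is to interchange the infimum over the family $\{m_\cdot\}$ with the time integral, i.e.\ to show $\inf_{\{m_\cdot\}}\int_0^T F(t,m_t)\,dt=\int_0^T\inf_{m\in M_1(\Y)}F(t,m)\,dt$. This follows from a standard measurable-selection argument once one observes that $F\ge 0$ (take $\alpha\equiv 0$ and $g\equiv 0$), that $M_1(\Y)$ is a compact metric space, that $F$ is jointly measurable, and that $m\mapsto F(t,m)$ is lower semicontinuous, since $A(t,\cdot)$ and $B(t,\cdot)$ are each suprema over fixed test objects of maps that are affine, hence continuous, in $m$, because $m\mapsto\bar\lambda_{x,x'}(\mu_t,m)=\int_\Y\lambda_{x,x'}(\mu_t,y)m(dy)$ and $m\mapsto\int_\Y(\cdot)\,m(dy)$ are affine.

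It then remains to identify, for a.e.\ fixed $t$, the pointwise infimum $\inf_{m}F(t,m)$ with the bracketed integrand of the corollary, i.e.\ to pull the supremum over $\alpha$ outside the infimum over $m$. Writing $H(\alpha,m):=\langle\alpha,\dot\mu_t\rangle-\langle\alpha,\bar\Lambda^*_{\mu_t,m}\mu_t\rangle-\int_{\X\times\EX}\tau(D\alpha(x,\Delta))\bar\lambda_{x,x+d\Delta}(\mu_t,m)\mu_t(dx)+B(t,m)$, we have $\inf_m F(t,m)=\inf_m\sup_\alpha H(\alpha,m)$. For each $m$, $\alpha\mapsto H(\alpha,m)$ is concave and continuous on $\mathbb{R}^{|\X|}$, since $\tau$ is convex and the rates and $\mu_t$ are nonnegative; for each $\alpha$, $m\mapsto H(\alpha,m)$ is convex and lower semicontinuous on the compact convex set $M_1(\Y)$, the first three terms being affine in $m$ as noted above and $B(t,\cdot)$ a supremum of affine functions. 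Sion's minimax theorem therefore yields $\inf_m\sup_\alpha H=\sup_\alpha\inf_m H=\sup_\alpha\bigl(\langle\alpha,\dot\mu_t\rangle-\sup_{m}\bigl[\langle\alpha,\bar\Lambda^*_{\mu_t,m}\mu_t\rangle+\int_{\X\times\EX}\tau(D\alpha(x,\Delta))\bar\lambda_{x,x+d\Delta}(\mu_t,m)\mu_t(dx)-B(t,m)\bigr]\bigr)$, which is exactly the integrand in the statement of the corollary. Assembling the steps above gives $J_T$ in the claimed form.

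The contraction principle and the concave/convex bookkeeping are routine; the parts that need care are (i) the interchange of $\inf_{\{m_\cdot\}}$ with $\int_0^T dt$, which is carried out by treating $F$ as a normal integrand and invoking measurable selection, and (ii) verifying the hypotheses of Sion's theorem, in particular that $B(t,\cdot)$ --- a Donsker--Varadhan-type functional of $m$ for the finite irreducible chain with generator $L_{\mu_t}$ under assumptions~\ref{assm:b1}--\ref{assm:b2} --- is a genuinely real-valued, convex, lower semicontinuous (indeed continuous) function on $M_1(\Y)$, so that the minimax identity is not vacuous. I expect (i) to be the main, though standard, obstacle.
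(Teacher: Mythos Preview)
Your proposal is correct and takes the same starting point as the paper---the contraction principle applied to the projection $(\mu,\theta)\mapsto\mu$---which is in fact all the paper says (one sentence citing Dembo and Zeitouni~\cite[Theorem~4.2.1]{dembo-zeitouni}); the paper does not spell out how the contracted rate function $\inf_\theta I(\mu,\theta)$ reduces to the displayed form of $J_T$. You go further and supply exactly the two missing ingredients: the interchange of $\inf_{\{m_\cdot\}}$ with $\int_0^T$ via measurable selection (your $F$ is indeed a normal integrand on $[0,T]\times M_1(\Y)$, the latter compact, so this is standard), and the swap $\inf_m\sup_\alpha=\sup_\alpha\inf_m$ via Sion's theorem (your convexity/concavity and semicontinuity checks are correct; in particular $B(t,\cdot)$ is the Donsker--Varadhan functional for a finite irreducible chain and is finite, convex, and continuous on $M_1(\Y)$ under~\ref{assm:b1}--\ref{assm:b2}). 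One minor remark: every $\theta\in\DY$ automatically disintegrates as $m_t(dy)\,dt$ since its time marginal is Lebesgue, so that case distinction is vacuous---but this does not affect your argument.
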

\subsubsection{Large time behaviour}
\label{section:large-time-behaviour}
Using the result on the finite duration LDP for the process $\{\mu_N\}_{N\geq 1}$ in Corollary~\ref{cor:ldp-mun}, we can employ the tools of Freidlin and Wentzell~\cite[Chapter~6]{freidlin-wentzell} and Hwang and Sheu~\cite{hwang-sheu-90} to study the large time behaviour of the process $\mu_N$.  The programme to understand the large time behaviour is carried out in~\cite[Section~3]{mypaper-1}. The two crucial properties needed to establish large time behaviour of $\mu_N$ are: (i) the continuity of the Freidlin-Wentzell quasipotential (see~\cite[Section 3]{mypaper-1} for its  definition) and (ii) uniform large deviations of $\mu_N$, uniformly with respect to the initial condition $\mu_N(0)$ lying in a given closed set. One can show that the Freidlin-Wentzell quasipotential is continuous on $\mx \times \mx$  by constructing constant velocity trajectories between any two given points in $\mx$ and estimating the corresponding $J_T$ for that path; see Borkar and Sundaresan~\cite[Lemma~3.4]{borkar-sundaresan-12}. Since the space $\mx$ is compact, one can also establish uniform large deviation estimates, see~\cite[Corollary~2.1]{mypaper-1}. Using the above two properties and the fact that $(\mu_N, Y_N)$ is strong Markov, one can establish results on the large time behaviour of $\mu_N$ such as (i) the mean exit time from a neighbourhood of an $\omega$-limit set of~(\ref{eqn:mve}), (ii) the probability of reaching a given $\omega$-limit set starting from another, etc. -- we refer the reader to~\cite[Section~3]{mypaper-1} for such results.

\section{Exponential tightness}
\label{section:exponential-tightness}
In this section, we prove the exponential tightness of the sequence $\{(\mu_N(t), \theta_N(t)), 0 \leq t \leq T\}_{N \geq 1}$ in $\DX \times \DY$. Towards this, we shall use the following results (Theorems~\ref{thm:exp-tight-cond1}-\ref{thm:exp-tight-cond2}). The proof of these results are standard and will be omitted here (see Feng and Kurtz~\cite[Theorem~4.4]{feng-kurtz} and Puhalskii~\cite[Theorem~B]{puhalskii-94}).
\begin{theorem}A sequence $\{X_N\}=\{X_{N,t}, 0 \leq t \leq T\}$ taking values in $D([0,T], S)$ is exponentially tight if and only if
\begin{enumerate}[label=(\roman*)]
\item for each $M >0$, there exists a compact set $K_M \subset S$ such that
\begin{align*}
\limsup_{N \to \infty}\frac{1}{N}\log P(\exists t \in [0,T] \text{ such that } X_{N,t} \notin K_M ) \leq -M,
\end{align*}
\item there exists a family of functions $ F \subset C(S)$  that  is closed under addition and separates points on $S$ such that for each $f \in F$, $\{f(X_N)\}$ is exponentially tight in $D([0,T], \mathbb{R})$.
\end{enumerate}
\label{thm:exp-tight-cond1}
\end{theorem}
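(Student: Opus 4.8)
The plan is to prove the two implications separately, the substance lying in the ``if'' direction. Suppose first that $\{X_N\}$ is exponentially tight in $D([0,T],S)$, so that for each $M>0$ there is a compact $\Gamma_M\subset D([0,T],S)$ with $\limsup_{N}\frac1N\log P(X_N\notin\Gamma_M)\le -M$. Condition~(i) follows from the characterisation of relatively compact subsets of the Skorohod space (Ethier and Kurtz~\cite[Chapter~3]{ethier-kurtz}), by which the range $K_M:=\overline{\{x(t):x\in\Gamma_M,\ t\in[0,T]\}}$ is a compact subset of $S$; since $\{\exists\,t:X_{N,t}\notin K_M\}\subset\{X_N\notin\Gamma_M\}$, this gives~(i) with this $K_M$. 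For~(ii), since $S$ is a complete separable metric space one may take $F$ to be a countable subfamily of $C_b(S)$ that is closed under addition and separates points; for each such $f$ the map $D([0,T],S)\ni x\mapsto f\circ x\in D([0,T],\mathbb{R})$ is $J_1$-continuous (the same time changes witnessing convergence, using uniform continuity of $f$ on the relatively compact range of a convergent sequence), and a continuous image of an exponentially tight sequence is exponentially tight.

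For the converse I would use the exponential analogue of the Ethier and Kurtz compactness criterion: $\{X_N\}$ is exponentially tight in $D([0,T],S)$ provided (a)~the compact containment~(i) holds and (b)~for all $M,\eta>0$ there is $\delta>0$ with $\limsup_N\frac1N\log P(w'(X_N,\delta)\ge\eta)\le -M$, where $w'$ is the Skorohod $J_1$ modulus of continuity; indeed, for each $M$ the paths with range in $K_M$ and $w'(\cdot,\delta_j)\le 1/j$ for a suitable sequence $\delta_j$ then form a compact subset of $D([0,T],S)$ carrying all but exponentially little mass. As~(a) is hypothesis~(i), the task is to deduce~(b) from~(i) and~(ii). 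Fix $M$; outside the event $E_N^M:=\{X_{N,t}\in K_M\ \forall\,t\}$ the probability decays at rate $M$ by~(i), so only $P(\{w'(X_N,\delta)>\eta\}\cap E_N^M)$ must be bounded. The key geometric step is a compactness argument: $K_M$ being compact and $F$ separating points of $S$, for every $\varepsilon>0$ there are finitely many $f_1,\dots,f_k\in F$ (drawn from a fixed countable separating subfamily) and $\rho>0$ such that $x,y\in K_M$ and $\max_{j}|f_j(x)-f_j(y)|<\rho$ imply $d(x,y)<\varepsilon$ — were this false, two sequences in $K_M$ would contradict the separation property in the limit. Consequently, on $E_N^M$ one has $\{w'(X_N,\delta)>\varepsilon\}\subset\{w'((f_1(X_N),\dots,f_k(X_N)),\delta)\ge\rho\}$.

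It then remains to control $w'$ of the $\mathbb{R}^k$-valued process $(f_1(X_N),\dots,f_k(X_N))$. Hypothesis~(ii) gives exponential tightness of $f(X_N)$ in $D([0,T],\mathbb{R})$ for \emph{every} $f\in F$, hence for each $f_j$ and, crucially, for each finite sum of the $f_j$ since $F$ is closed under addition; a standard argument — the exponential analogue of the fact that, for ordinary tightness, tightness of the coordinates together with tightness of all their pairwise sums forces joint tightness in $D([0,T],\mathbb{R}^k)$ — upgrades this to exponential tightness of the vector-valued process in $D([0,T],\mathbb{R}^k)$, which in turn supplies, via the modulus control that exponential tightness implies, a $\delta>0$ with $\limsup_N\frac1N\log P(w'((f_1(X_N),\dots,f_k(X_N)),\delta)\ge\rho)\le -M$. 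Combining this with the estimate on $P((E_N^M)^c)$ yields~(b), and the two implications together give the theorem.

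The main obstacle is exactly this passage between the $\mathbb{R}$-valued projections and the $S$-valued path, or equivalently between the several real-valued moduli and the vector-valued one. The $J_1$ modulus $w'$ is an infimum over partitions, so a partition nearly optimal for one coordinate need not serve the others, and — more seriously — a single real projection cannot detect two jumps that are close in time but at distinct locations; this is precisely why the hypothesis that $F$ is closed under addition is imposed. Making it effective, either through a careful common-refinement estimate or by routing through the auxiliary modulus $w''$ and the standard inequalities relating $w'$ and $w''$ in Ethier and Kurtz~\cite[Chapter~3]{ethier-kurtz}, together with the compactness lemma producing the finite separating subfamily on $K_M$, is the delicate part; the remaining steps are union bounds and the already-available real-valued criterion.
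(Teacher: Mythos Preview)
The paper does not actually prove this theorem: immediately before stating it, the authors write that ``the proof of these results are standard and will be omitted here'' and refer to Feng and Kurtz~\cite[Theorem~4.4]{feng-kurtz}. So there is no in-paper proof to compare against.

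Your sketch follows the standard route that Feng and Kurtz take: compact containment gives a compact $K_M\subset S$; on $K_M$ the separating family $F$ yields finitely many $f_1,\dots,f_k$ that control the metric; closure of $F$ under addition is used to upgrade exponential tightness of each $f_j(X_N)$ in $D([0,T],\mathbb{R})$ to exponential tightness of the vector $(f_1(X_N),\dots,f_k(X_N))$ in $D([0,T],\mathbb{R}^k)$; and this in turn controls the $J_1$ modulus of $X_N$ on the event $E_N^M$. This is exactly the architecture of the cited reference, so your approach is the intended one. The one step you flag as delicate --- that exponential tightness of each $f_j(X_N)$ and of all finite sums implies exponential tightness of the vector-valued process --- is genuinely the heart of the matter, and you are right that it goes through the auxiliary modulus $w''$ rather than $w'$ directly; in a full write-up you would want to spell this out (it is Lemma~4.3 and the surrounding discussion in Feng and Kurtz), but as a proof plan your outline is sound.
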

See Feng and Kurtz~\cite[Theorem 4.4]{feng-kurtz} for a proof. We also need the following sufficient condition for exponential tightness in $D([0,T],\mathbb{R})$.
\begin{theorem}Let $\{X_N\}$ be a sequence taking values in $D([0,T],\mathbb{R})$. Suppose that
\begin{enumerate}[label=(\roman*)]
\item we have \begin{align*}
 \lim_{M \to \infty }\limsup_{N \to \infty}\frac{1}{N} \log P(\exists t\in [0,T]\text{ such that } |X_{N,t}| >M ) = -\infty,
\end{align*}
\item for each $\varepsilon > 0$,
\begin{align*}
\lim_{\delta \downarrow 0} \limsup_{N \to \infty}\frac{1}{N} \log \sup_{t_1\in [0,T]}P(\sup_{t_2 \in [t_1, t_1+ \delta]} |X_{N, t_2} - X_{N,t_1}| > \varepsilon) = -\infty.
\end{align*} 
\end{enumerate}
Then $\{X_N\}$ is exponentially tight in $D([0,T],\mathbb{R})$.
\label{thm:exp-tight-cond2}
\end{theorem}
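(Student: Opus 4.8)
The plan is to reduce the exponential tightness of the real-valued sequence $\{X_N\}$ to the characterisation of relative compactness in $D([0,T],\mathbb{R})$ via moduli of continuity, combined with the exponential analogue of Prokhorov's theorem. Recall that a sequence of probability laws $\{\mu_N\}$ on $D([0,T],\mathbb{R})$ is exponentially tight precisely when for every $M>0$ there is a compact set $K_M\subset D([0,T],\mathbb{R})$ with $\limsup_N \tfrac1N\log P(X_N\notin K_M)\le -M$. So the task is to manufacture such compact sets out of the two hypotheses. First I would recall the Skorohod-$J_1$ compactness criterion (Ethier and Kurtz~\cite[Chapter~3]{ethier-kurtz}): a set $A\subset D([0,T],\mathbb{R})$ has compact closure iff $\sup_{x\in A}\sup_{t\in[0,T]}|x(t)|<\infty$ and $\lim_{\delta\downarrow 0}\sup_{x\in A} w'(x,\delta)=0$, where $w'$ is the c\`adl\`ag modulus of continuity. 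Condition (i) controls the uniform bound at the exponential scale; condition (ii) must be upgraded to control the modulus $w'$ at the exponential scale.

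The key steps, in order, are as follows. Given $M>0$, use (i) to pick $R=R_M$ so that $\limsup_N\tfrac1N\log P(\sup_{t}|X_{N,t}|>R)\le -M-1$; this handles the first defining property of the compact set. Next, for the modulus control: fix $\varepsilon_j=1/j$, apply (ii) to obtain $\delta_j>0$ such that $\limsup_N\tfrac1N\log\sup_{t_1}P(\sup_{t_2\in[t_1,t_1+\delta_j]}|X_{N,t_2}-X_{N,t_1}|>\varepsilon_j)\le -M-1-j$. Cover $[0,T]$ by $\lceil T/\delta_j\rceil$ overlapping intervals of length $\delta_j$ and take a union bound over these finitely many intervals; since $\tfrac1N\log$ of a finite sum of exponentially small terms is governed by the largest, the extra factor $\lceil T/\delta_j\rceil$ is absorbed (it contributes $o(N)$) and one gets $\limsup_N\tfrac1N\log P(w''(X_N,\delta_j)>2\varepsilon_j)\le -M-1-j$, where $w''$ denotes the oscillation over all subintervals of length $\le\delta_j$ (this dominates $w'$). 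Now set
\begin{align*}
K_M \coloneqq \Big\{ x\in D([0,T],\mathbb{R}) : \sup_{t\in[0,T]}|x(t)|\le R_M,\ w''(x,\delta_j)\le 2\varepsilon_j \text{ for all } j\ge 1\Big\}.
\end{align*}
By the compactness criterion $K_M$ is relatively compact, and since it is defined by closed conditions it is compact. A union bound over the complementary events, plus the standard fact $\limsup_N\tfrac1N\log\sum_k a_{N,k}=\max_k\limsup_N\tfrac1N\log a_{N,k}$ for a summable-in-$k$ collection, yields $\limsup_N\tfrac1N\log P(X_N\notin K_M)\le -M$. This establishes exponential tightness.

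The main obstacle is the passage from the one-sided increment bound in hypothesis~(ii) to a genuine modulus-of-continuity bound over the whole interval at the exponential scale: one must be careful that the union bound over the $\lceil T/\delta\rceil$ blocks, and then over the countably many scales $j$, does not destroy the exponential decay. The resolution is precisely the $\log$-of-sum asymptotics above, which is why the $-M-1-j$ (rather than merely $-M$) slack was built in at each stage; it is a routine but essential bookkeeping point. A secondary technical point is that $w'$ (the true Skorohod modulus allowing one ``bad'' jump) is bounded above by the cruder block-oscillation quantity $w''$ up to the endpoint behaviour, so controlling $w''$ suffices for $J_1$-relative compactness; this comparison is standard (see Ethier and Kurtz~\cite[Chapter~3]{ethier-kurtz}) and needs only to be invoked, not reproved.
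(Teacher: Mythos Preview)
The paper does not supply its own proof of this theorem; it explicitly says the proof is standard and defers to Puhalskii~\cite[Theorem~B]{puhalskii-94}. So there is nothing in the paper to compare against, and the question is simply whether your argument is correct.

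Your overall strategy is the right one, but there is a genuine gap at the countable union step. The ``standard fact'' $\limsup_N \tfrac1N\log \sum_k a_{N,k}=\max_k\limsup_N\tfrac1N\log a_{N,k}$ is correct for finite families but false for countable ones without extra uniformity: take $a_{N,k}=1$ if $k=N$ and $a_{N,k}=e^{-Nk}$ otherwise; then $\limsup_N\tfrac1N\log a_{N,k}=-k$ for every $k$, yet $\sum_k a_{N,k}\ge 1$ for all $N$. In your construction this actually bites. Because you build $K_M$ using the uniform modulus $w''$, the set $K_M$ sits inside $C([0,T],\mathbb{R})$, and condition~(ii) does \emph{not} force $X_N$ to be continuous --- it allows, for instance, a single jump of size $1/N$ at a uniform random time. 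For such $X_N$ one has $P(w''(X_N,\delta_j)>1/j)=1$ for every $j>N$, so your union bound over $j$ gives $+\infty$ and the argument yields nothing.

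The repair is twofold. First, define $K_M$ via the Skorohod modulus $w'$ rather than $w''$; your finite block argument already controls $w'$ since $w'(x,\delta)\le w''(x,2\delta)$, and the resulting $K_M$ is compact in $D$ without forcing continuity. Second --- and this is the point your ``standard fact'' glosses over --- use that $w'(x,\delta)$ is nondecreasing in $\delta$ and that $w'(X_N,\delta)\to 0$ a.s.\ as $\delta\to 0$ for each \emph{fixed} $N$: having chosen $\delta_j$ so that $P(w'(X_N,\delta_j)>1/j)\le 2^{-j}e^{-NM}$ holds for all $N\ge N_j$, further shrink $\delta_j$ (finitely many additional adjustments, one for each $N<N_j$) so that the same bound holds for \emph{all} $N\ge 1$; monotonicity of $w'$ in $\delta$ guarantees that shrinking does not spoil the large-$N$ bound. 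With this uniform-in-$N$ estimate the countable sum is genuinely $\le e^{-NM}$ for every $N$, and the rest of your argument goes through.
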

See Puhalskii~\cite[Theorem B]{puhalskii-94} for a proof.

We now show the main result of this section, namely exponential tightness of the sequence $\{(\mu_N, \theta_N)\}_{N \geq 1}$.
\begin{theorem} The sequence of random variables $\{(\mu_N(t), \theta_N(t)), t \in [0,T]\}_{N \geq 1}$ is exponentially tight in $D([0,T], M_1(\X)) \times D_\uparrow([0,T], M(\Y))$, i.e., given any $M>0$, there exists a compact set $K_M \subset D([0,T], M_1(\X)) \times D_\uparrow([0,T], M(\Y))$ such that 
\begin{align*}
\limsup_{N \to \infty} \frac{1}{N} \log P\left( \{(\mu_N(t), \theta_N(t)), 0 \leq t \leq T\} \notin K_M \right) \leq -M
\end{align*} 
\label{thm:exp-tightness}
\end{theorem}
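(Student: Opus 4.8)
The plan is to verify the two conditions of Theorem~\ref{thm:exp-tight-cond1} for the product space $S = \mx \times \my$, exploiting the fact that $\mx$ is already compact (so the spatial-compact-containment condition for the $\mu_N$-coordinate is automatic) and that $\theta_N$ lives in the closed subset $\DY$ of $D([0,T],\my)$ consisting of paths with $\theta_{N,t}(\Y)=t$, which is itself compact-valued in a strong sense. For the compact-containment condition (i), note that $\theta_N$ takes values in the set of measures $\vartheta$ on $[0,T]\times\Y$ with $\vartheta([0,s]\times\Y)=s$; since total mass is deterministically bounded by $T$, the set of such measures is compact in $\my$ for each fixed $t$, and in fact $\{\theta_N\}$ takes values in a \emph{fixed} compact subset of $\DY$ because the maps $t\mapsto\theta_{N,t}$ are uniformly Lipschitz in the total-variation sense (mass increments $\theta_{N,t}-\theta_{N,s}$ have mass exactly $t-s$). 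So condition (i) holds with a deterministic compact set, and the $P(\cdots)=0$ for large $N$.

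For condition (ii), I would take $F$ to be the family of functions on $\mx\times\my$ of the form $(\xi,\vartheta)\mapsto \langle h, \xi\rangle$ for $h \in \mathbb{R}^{|\X|}$ together with $(\xi,\vartheta)\mapsto \langle \phi, \vartheta\rangle$ for $\phi$ ranging over $B(\Y)$ (identifying $\my$ with $\mathbb{R}^{|\Y|}_+$, so these are just linear functionals); closure under addition and point-separation are clear. It remains to check, for each such $f$, that $\{f(\mu_N,\theta_N)\}$ is exponentially tight in $D([0,T],\mathbb{R})$ via Theorem~\ref{thm:exp-tight-cond2}. The first hypothesis of that theorem is immediate since $\langle h,\mu_{N,t}\rangle$ and $\langle\phi,\theta_{N,t}\rangle$ are deterministically bounded (by $\|h\|_\infty$ and $T\|\phi\|_\infty$ respectively). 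For the oscillation bound (ii) of Theorem~\ref{thm:exp-tight-cond2}: the $\theta_N$-functionals are Lipschitz in time (the increment over $[t_1,t_1+\delta]$ is bounded by $\delta\|\phi\|_\infty$), so the probability in question is literally $0$ once $\delta<\varepsilon/\|\phi\|_\infty$. For the $\mu_N$-functionals, write, using the generator $\Phi_{N,y}$, the Dynkin martingale
\begin{align*}
M_N^h(t) = \langle h, \mu_N(t)\rangle - \langle h,\mu_N(0)\rangle - \int_0^t \sum_{(x,x')\in\EX} \mu_N(s)(x)\,\lambda_{x,x'}(\mu_N(s),Y_N(s))\, Dh(x,\Delta)\, ds,
\end{align*}
where the bounded-variation part has time-derivative bounded by a constant $C(h)$ (by boundedness of the rates, assumptions~\ref{assm:a2},~\ref{assm:b2}). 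Thus the oscillation of $\langle h,\mu_N\rangle$ over $[t_1,t_1+\delta]$ is controlled by $C(h)\delta$ plus the oscillation of the martingale $M_N^h$. To bound the martingale part, I would compute its predictable quadratic variation (or, more conveniently, apply the exponential martingale associated with $\Phi_{N,y}$: $\exp\{N\langle h,\mu_N(t)\rangle - N\int_0^t \sum_{(x,x')\in\EX}\mu_N(s)(x)\lambda_{x,x'}(\cdot)\tau(Dh(x,\Delta)/N)N\,ds\}$ is a martingale, wait—let me be careful: the standard exponential martingale is $\exp\{ N[\langle h,\mu_N(t)\rangle - \langle h,\mu_N(0)\rangle] - \int_0^t \sum N\mu_N(s)(x)\lambda_{x,x'}(\cdot)\tau(Dh(x,\Delta))\,ds\}$). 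Using this exponential supermartingale together with Doob's maximal inequality gives, for any $\lambda>0$,
\begin{align*}
P\Big(\sup_{t_2\in[t_1,t_1+\delta]} |M_N^h(t_2)-M_N^h(t_1)| > \varepsilon\Big) \le 2\exp\{-N\lambda\varepsilon + N\delta\, c(h,\lambda)\}
\end{align*}
for a constant $c(h,\lambda)$ depending on the (bounded) rates; taking $\lambda\to\infty$ after $\delta\to0$ yields the required $\lim_{\delta\downarrow0}\limsup_N \frac1N\log(\cdots) = -\infty$. This establishes condition (ii) of Theorem~\ref{thm:exp-tight-cond2} for each $f\in F$, hence condition (ii) of Theorem~\ref{thm:exp-tight-cond1}, completing the proof.

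The main obstacle—though it is more bookkeeping than genuine difficulty—is obtaining the clean exponential oscillation estimate for the martingale part $M_N^h$ uniformly in the starting time $t_1$; the right tool is the exponential martingale (stochastic exponential) associated to $\Phi_{N,y}$ rather than a variance computation, since the former directly produces bounds of the order $\exp(-N\times\text{const})$ needed at this scale and is robust to the $Y_N$-dependence of the rates (the bound only uses boundedness of the rates, uniformly in $y$). I would also make sure to phrase condition (i) of Theorem~\ref{thm:exp-tight-cond1} using a single compact set $K_M$ in the product space that works for all $M$ (since the bound is deterministic), which is why the final compact set $K_M$ in the statement can be taken independent of $M$ for the containment part, with the genuine $M$-dependence entering only through the modulus-of-continuity estimates bundled into the Skorohod compactness criterion.
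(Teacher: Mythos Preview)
Your proposal is correct and follows essentially the same route as the paper: verify the criteria of Theorems~\ref{thm:exp-tight-cond1}--\ref{thm:exp-tight-cond2}, dispatch $\theta_N$ via its deterministic Lipschitz-in-time property, and control the oscillation of $\langle h,\mu_N\rangle$ using the exponential (Dol\'eans--Dade) martingale together with Doob's inequality, then send the exponent (your $\lambda$, the paper's $\beta$) to infinity. The only differences are organizational: the paper first reduces to \emph{separate} exponential tightness of $\mu_N$ and $\theta_N$ (Feng--Kurtz, Lemma~3.6) rather than working directly on the product space, which lets it use two families of linear functionals each trivially closed under addition; your family $F$ as written---the \emph{union} of the functionals $(\xi,\vartheta)\mapsto\langle h,\xi\rangle$ and $(\xi,\vartheta)\mapsto\langle\phi,\vartheta\rangle$---is not closed under addition, so strictly you should take $F=\{(\xi,\vartheta)\mapsto\langle h,\xi\rangle+\langle\phi,\vartheta\rangle\}$ and check Theorem~\ref{thm:exp-tight-cond2} for sums, which is immediate since the $\theta_N$ contribution is deterministically Lipschitz. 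Also, the paper applies the exponential martingale directly to $\langle h,\mu_N\rangle$ rather than first splitting off the Dynkin martingale $M_N^h$; both work, but note that your displayed ``standard exponential martingale'' is missing the drift term $\langle h,\bar\Lambda^*_{\mu_N,Y_N}\mu_N\rangle$ in the compensator (compare the paper's $\Phi_{Y_{N,s}}f$ term).
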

\begin{proof}
It suffices to show that  $\mu_N$ and $\theta_N$ are individually exponentially tight in $\DX$ and $\DY$ respectively (see, for example, Feng and Kurtz~\cite[Lemma~3.6]{feng-kurtz}).

Consider $\theta_N$. Note that, for $0 \leq t \leq T$, we have $|\theta_{N,t}(Y)| \leq t $ for any subset $Y \subset \Y$. Therefore, using the compact set $K_M = \{y \in \mathbb{R}^{|\Y|}: 0 \leq y_i \leq t \,  \forall i\} \subset M(\Y)$, condition $(i)$ of Theorem~\ref{thm:exp-tight-cond1} holds. To verify condition $(ii)$, define the collection of functions $F \coloneqq \{f:M(\Y) \to \mathbb{R} : f(\theta) = \langle \alpha, \theta \rangle, \alpha \in \mathbb{R}^{|\Y|}\}$. Clearly, $F$ is closed under addition and separates points on $M(\Y)$. For any $f$ of the form $f(\theta) = \langle \alpha, \theta \rangle$ for some $\alpha \in \mathbb{R}^{|\Y|}$, note that, with $X_{N,t} =f(X_{N,t})$, condition $(i)$ of Theorem~\ref{thm:exp-tight-cond2} holds since $|X_{N,t}| \leq t \max_{i \in \Y}|\alpha_i|$. To verify condition $(ii)$ of Theorem~\ref{thm:exp-tight-cond2}, note that, for any $0 \leq s \leq t \leq T $, we have $|\theta_{N,t}(Y) - \theta_{N,s}(Y)| \leq t-s$ for any $Y \subset \Y$ and hence $|X_{N,t} - X_{N,s}| \leq (t -s) \max_i |\alpha_i|$. Thus, by choosing a sufficiently small $\delta>0$, it is easy to see that condition~$(ii)$ of Theorem~\ref{thm:exp-tight-cond2} holds. This establishes the exponential tightness of $\theta_N$ in $\DY$.

We now show that $\mu_N$ is exponentially tight in  $D([0,T], M_1(\X))$. Since for each $t > 0$, $\mu_{N,t}$ takes values in a compact space, condition~$(i)$ of Theorem~\ref{thm:exp-tight-cond1} holds trivially. Again, to show condition $(ii)$ in Theorem~\ref{thm:exp-tight-cond1}, we shall make use of Theorem~\ref{thm:exp-tight-cond2}. For this, we fix the class of functions $F \coloneqq \{f:M_1(\X) \to \mathbb{R}_+, f(\xi) = \langle \alpha, \xi \rangle, \alpha \in \mathbb{R}^{|\X|}\}$, which is clearly closed under addition and separates points on $\mx$. Fix $f \in F$ such that $f(\xi) = \langle \alpha, \xi \rangle$ for some $\alpha \in \mathbb{R}^{|\X|}$ and let $X_{N, t} = f(\mu_{N, t}) = \langle \alpha, \mu_{N,t} \rangle$. Note that, we have $|X_{N,t}| \leq \max_x |\alpha_x|$ for all $t \geq 0$ and $N \geq 1$, hence condition $(i)$ of Theorem~\ref{thm:exp-tight-cond2} holds. To check condition $(ii)$, note that,  for each $t_1 \geq 0$ and $\beta > 1$,
\begin{align*}
M_t\coloneqq \exp & \left\{ N\left(\beta X_{N,t} - \beta X_{N,t_1} -\beta \int_{t_1}^t \Phi_{Y_{N,s}} f(\mu_{N,s}) ds\right. \right.\\
& \left. \left. - \int_{t_1}^t  \int_{\X \times \EX}\tau(\beta D\alpha(x,\Delta)) \lambda_{x,x+d\Delta}(\mu_{N,s}, Y_{N,s}) \mu_{N,s}(dx) ds\right)\right\}, t \geq t_1,
\end{align*}
is an $\mathcal{F}_t$-martingale (see L\'eonard~\cite[Lemma~3.3]{leonard-95-1}; alternatively, this can be easily checked using the Dol\'eans-Dade exponential  formula, see, for example, Jacod and Shiryaev~\cite[Chapter~I,~Theorem~4.61]{jacod-shiryaev}). Therefore, given $\varepsilon>0$, $\delta > 0$ and  $t_1>0$, we have
\begin{align*}
\Prob & \left( \sup_{t_2  \in [t_1, t_1+ \delta]} (X_{N, t_2} - X_{N,t_1}) > \varepsilon \right) \\
&= \Prob \left(\sup_{t_2 \in [t_1, t_1+ \delta]} \exp\{N \beta (X_{N, t_2} - X_{N,t_1}) \}> \exp\{N\beta \varepsilon\}\right) \\
& =  \Prob \left(\sup_{t_2 \in [t_1, t_1+ \delta]} M_t \times \exp \left\{N\beta \int_{t_1}^t \Phi_{Y_{N,s}}f(\mu_{N,s})ds  \right. \right. \\
& \qquad \left. \left. + N \int_{t_1}^t  \int_{\X \times \EX}\tau(\beta D\alpha(x,d\Delta))  \lambda_{x,x^\prime}(\mu_{N,s}, Y_{N,s}) \mu_{N,s}(dx) ds  \right\}> \exp\{N\beta \varepsilon\} \right) \\
& \leq \Prob \left(\sup_{t_2 \in [t_1, t_1+ \delta]} M_t \exp\{N\delta c_{\alpha,\beta}\} > \exp\{N\beta  \varepsilon\}\right) \\
& \leq \exp\{-N(\beta \varepsilon - \delta c_{\alpha,\beta} )\} 
\end{align*}
where $c_{\alpha,\beta}$ is a constant depending on $\alpha$ and $\beta$; here the first inequality follows from the boundedness of the transition rates which is a consequence of the Lipschitz assumption~\ref{assm:a2}, and the second inequality follows from Doob's martingale inequality and the fact that $EM_t = EM_{t_1} = 1$. Thus, we obtain
\begin{align*}
 \lim_{\delta\downarrow 0}\limsup_{N \to \infty}\frac{1}{N}\log  \sup_{t_1 \in [0,T]} P\left(\sup_{t_2  \in [t_1, t_1+ \delta]} (X_{N, t_2} - X_{N,t_1}) > \varepsilon\right) \leq -\beta \varepsilon,
\end{align*}
and hence, letting $\beta \to \infty$, we have 
\begin{align*}
 \lim_{\delta\downarrow 0}\limsup_{N \to \infty}\frac{1}{N}\log \sup_{t_1 \in [0,T]}P\left(\sup_{t_2  \in [t_1, t_1+ \delta]} (X_{N, t_2} - X_{N,t_1}) > \varepsilon\right) = -\infty.
\end{align*}
We can now replace $\alpha$ with $-\alpha$ and repeat the above arguments to conclude that
\begin{align*}
 \lim_{\delta\downarrow 0}\limsup_{N \to \infty}\frac{1}{N}\log  \sup_{t_1 \in [0,T]} P\left(\sup_{t_2  \in [t_1, t_1+ \delta]} |X_{N, t_2} - X_{N,t_1}| > \varepsilon\right) = -\infty.
\end{align*}
We have thus verified condition $(ii)$ of Theorem~\ref{thm:exp-tight-cond2} and hence it follows that $\{\mu_N\}_{N\geq 1}$ is exponentially tight in $D([0,T], M_1(\X))$. This completes the proof of the theorem.
\end{proof}
\label{Edxp}

\section{An equation for the subsequential rate function}
\label{section:itildemartingale}
Let $\tilde{I}:\DX \times \DY \to [0, +\infty]$ denote a subsequential rate function for the family $\{(\mu_N,\theta_N)\}_{N \geq 1}$, i.e., for some sequence $\{N_k\}_{k \geq 1}$ of $\mathbb{N}$, the family $\{(\mu_{N_k},\theta_{N_k})\}_{k \geq 1}$ satisfies the large deviation principle with rate function $\tilde{I}$. In this section, we obtain a condition that every such subsequential rate function must satisfy.

We start with some definitions. 
Given $g \in C^{1,1}([0,T]\times M_1(\X) \times \Y)$, define
\begin{align}
V^{g}_t(\mu_N, Y_N)  & \coloneqq g_t(\mu_{N}(t),Y_{N}(t)) - g_0(\mu_{N}(0),Y_{N}(0)) - \int_0^t  \frac{\partial g_s}{\partial s}(\mu_N(s),Y_N(s))ds \nonumber \\
& \qquad  - \int_0^t  \sum_{(x,x^\prime) \in \EX} \biggr[ g_s\left(\mu_N(s) + \frac{\delta_{x^\prime}-\delta_x}{N},Y_N(s)\right) \nonumber \\
& \qquad \qquad - g_s(\mu_N(s),Y_N(s)) \biggr]  \times N \mu_{N,s}(x)\lambda_{x,x^\prime}(\mu_N(s), Y_N(s)) ds \nonumber \\
& \qquad - \int_0^t  \sum_{(x,x^\prime) \in \EX} \tau\biggr(\biggr[ g_s\left(\mu_N(s) + \frac{\delta_{x^\prime}-\delta_x}{N},Y_N(s)\right) \nonumber \\
& \qquad \qquad  - g_s(\mu_N(s),Y_N(s)) \biggr] \biggr) \times N \mu_{N,s}(x)\lambda_{x,x^\prime}(\mu_N(s), Y_N(s)) ds \nonumber \\
\label{eqn:def-v}
\end{align}
Let $n \in \mathbb{N}$. Given the time points $0 = t_0 < t_1 < \cdots < t_n <T$, $\alpha = (\alpha_{t_i})_{i=0}^n$ where $\alpha_{t_i}: M_1(\X) \to \mathbb{R}^{|\X|}$ is continuous for each $0 \leq i \leq n$, and  $\mu \in D([0,T], M_1(\X))$, define
\begin{align}
\int_0^t \alpha_s(\mu_s) d\mu_s \coloneqq \sum_{i=1}^n \langle  \alpha_{t \wedge t_{i-1}}(\mu_{t_{i-1}}) , (\mu_{t \wedge t_i} - \mu_{t \wedge t_{i-1}}) \rangle, t\in [0,T];
\label{eqn:integral-alpha-mu}
\end{align}
note that this object is an element of $D([0,T], \mathbb{R})$. Given $x \in \X$ and $\Delta = (x, x^\prime) \in \EX$, define
\begin{align*}
D\alpha_s(\mu_s)(x, \Delta) \coloneqq \alpha_s(\mu_s)(x^\prime) - \alpha_s(\mu_s)(x). 
\end{align*}
Similarly, given $y \in \Y$ and $\Delta = (y, y^\prime) \in \EY$, define
\begin{align*}
Dg_s(\mu_s, y, \Delta) \coloneqq g_s(\mu_s, y^\prime) - g_s(\mu_s, y).
\end{align*}
Finally, given $(\mu, \theta) \in \DX \times \DY$,  time points $0 = t_0 < t_1 < \cdots < t_n <T$, $\alpha = (\alpha_{t_i})_{i=0}^n$  and $g$ that satisfy the above requirements, define
\begin{align}
U^{\alpha, g}_t(\mu, \theta) & \coloneqq  \int_0^t  \alpha_s(\mu_s)d\mu_s - \int_0^t  \bigg\langle \alpha_s(\mu_s) , \int_\Y \Lambda_{\mu_s, y}^* \mu_s m_s(dy) \bigg\rangle ds \nonumber \\
& \qquad - \int_0^t \int_{\X \times \EX \times \Y} \tau(D\alpha_s(\mu_s)(x,\Delta)) \lambda_{x,x+d\Delta}(\mu_s, y)  \mu_s(dx) m_s(dy) ds \nonumber \\
& \qquad - \int_0^t \int_\Y \biggr(L_{\mu_s}g_s (\mu_s,\cdot)(y) \nonumber \\
& \qquad \qquad + \int_{\EY} \tau(Dg_s(\mu_s, y,\Delta)) \gamma_{y,y+d\Delta}(\mu_s)  \biggr) m_s(dy) ds;
\label{eqn:def-u}
\end{align}
here $\theta$, when viewed as a measure on $[0,T] \times \Y$, admits the representation $\theta(dy dt) = m_t(dy) dt$ for some $m_t \in M_1(\Y)$ for almost all $t \in [0,T]$, which follows from the existence of the regular conditional distribution (see, for example, Ethier and Kurtz~\cite[Theorem 8.1, page 502]{ethier-kurtz}).

We prove the following result, a condition that $\tilde{I}$ must satisfy in terms of the functions $U^{\alpha,g}$.
\begin{theorem}
\label{thm:rate-function-martingale}
Let $\tilde{I}: \DX \times \DY \to [0, + \infty]$ denote a rate function and suppose that there is a subsequence $\{(\mu_{N_k}, \theta_{N_k})\}_{k \geq 1}$ of $\{(\mu_N, \theta_N)\}_{N \geq 1}$ that satisfies the LDP with rate function $\tilde{I}$. Then, for each $\alpha$ and $g$ that satisfy the requirements of the definition of $U$ and $V$ in~(\ref{eqn:def-u}) and (\ref{eqn:def-v}) respectively, we have
\begin{align}
\sup_{(\mu,\theta)\in D([0,T],M_1(\X))\times D_{\uparrow}([0,T],M(\Y))} ( U^{\alpha, g}_T(\mu,\theta) - \tilde{I}(\mu, \theta)) = 0.
\label{eqn:itilde-u}
\end{align}
\end{theorem}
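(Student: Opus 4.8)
The plan is to establish the identity $\sup_{(\mu,\theta)}(U^{\alpha,g}_T(\mu,\theta)-\tilde I(\mu,\theta))=0$ by a two-sided argument based on a well-chosen exponential martingale. The first step is to assemble, for the Markov process $(\mu_N,Y_N)$, the Dol\'eans--Dade exponential martingale associated with the additive functional built from $g$ (acting on the fast variable and on the $1/N$-jumps of $\mu_N$) and from $\alpha$ (acting on the $\mu_N$-increments). Concretely, using $V^g$ from~(\ref{eqn:def-v}) together with the analogous additive functional generated by $\alpha_s(\mu_{N,s})$ — whose compensator supplies exactly the drift term $\langle\alpha_s(\mu_s),\int_\Y\Lambda^*_{\mu_s,y}\mu_s m_s(dy)\rangle$ and the $\tau$-correction $\int\tau(D\alpha_s(\mu_s)(x,\Delta))\lambda_{x,x+d\Delta}\mu_s(dx)m_s(dy)$ after the averaging is folded into the occupation measure $\theta_N$ — one checks, via the Dol\'eans-Dade formula (Jacod--Shiryaev, as already cited for the exponential tightness proof), that
\begin{align*}
Z_N(t):=\exp\bigl\{N\,\mathcal U^{\alpha,g}_{N,t}\bigr\}
\end{align*}
is an $(\mathcal F_t)$-martingale with $\E Z_N(t)=1$, where $N\mathcal U^{\alpha,g}_{N,t}$ is the ``prelimit'' version of $NU^{\alpha,g}_t(\mu_N,\theta_N)$: it differs from $NU^{\alpha,g}_t(\mu_N,\theta_N)$ only through (i) the discrete Riemann-sum nature of $\int_0^t\alpha_s(\mu_s)d\mu_s$ in~(\ref{eqn:integral-alpha-mu}), which is built to match the piecewise-constant choice of $\alpha$, and (ii) the replacement of the $O(1/N)$ jump-differences $g_s(\mu+\tfrac{\delta_{x'}-\delta_x}{N},y)-g_s(\mu,y)$ by the directional derivative $D\alpha_s(\mu_s)(x,\Delta)$, an error that is $O(1/N)$ uniformly because $g\in C^{1,1}$ and the transition rates are bounded (Assumption~\ref{assm:a2}). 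The key structural point is that $\mathcal U^{\alpha,g}_{N,t}$ is, up to this asymptotically negligible error, a bounded continuous functional of $(\mu_N,\theta_N)$ on $\DX\times\DY$.

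The second step is the upper bound $\sup(U^{\alpha,g}_T-\tilde I)\le 0$. From $\E\exp\{N_k\mathcal U^{\alpha,g}_{N_k,T}\}=1$ and the uniform $O(1/N)$ closeness of $\mathcal U^{\alpha,g}_{N,T}$ to the continuous bounded functional $(\mu,\theta)\mapsto U^{\alpha,g}_T(\mu,\theta)$, Varadhan's lemma (applicable because $U^{\alpha,g}_T$ is continuous and bounded, and the family satisfies the LDP with rate $\tilde I$ along $\{N_k\}$) gives
\begin{align*}
\lim_{k\to\infty}\frac1{N_k}\log\E\exp\{N_k U^{\alpha,g}_T(\mu_{N_k},\theta_{N_k})\}=\sup_{(\mu,\theta)}\bigl(U^{\alpha,g}_T(\mu,\theta)-\tilde I(\mu,\theta)\bigr),
\end{align*}
while the left side equals $0$ since the expectation is $1+o(1)$ (the $o(1)$ coming from the uniform error, which contributes $e^{o(N_k)}$). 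Hence the supremum is $\le 0$; I should be slightly careful to argue that boundedness of $U^{\alpha,g}_T$ indeed holds — it does, because $\alpha$ is a finite family of continuous functions on the compact $M_1(\X)$, $g\in C^{1,1}$ on a compact domain, and $\theta$ ranges over the compact set of sub-probability-type measures with total mass $T$.

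The third step is the reverse inequality $\sup(U^{\alpha,g}_T-\tilde I)\ge 0$, which is immediate once one exhibits a single point where $U^{\alpha,g}_T-\tilde I\ge 0$: take $(\mu,\theta)$ with $\tilde I(\mu,\theta)<\infty$ — for instance the law of large numbers path, i.e. $\mu$ solving~(\ref{eqn:mve}) and $\theta(dy\,dt)=\pi_{\mu_t}(dy)\,dt$, at which $\tilde I=0$ — and observe that for this $(\mu,\theta)$ the identity $U^{\alpha,g}_T(\mu,\theta)=\int_0^T(\text{compensator terms that cancel the drift})\,dt$ together with $\tau\ge 0$ is not quite enough, so instead one simply notes $\sup(U^{\alpha,g}_T-\tilde I)\ge U^{\alpha,g}_T(\mu^\circ,\theta^\circ)-0$ for the LLN pair, and a direct computation shows $U^{\alpha,g}_T(\mu^\circ,\theta^\circ)=0$ there because $\dot\mu^\circ_t=\bar\Lambda^*_{\mu^\circ_t,\pi_{\mu^\circ_t}}\mu^\circ_t$, $L_{\mu^\circ_t}$ has $\pi_{\mu^\circ_t}$ as invariant measure (so $\int L_{\mu^\circ_t}g_t\,d\pi_{\mu^\circ_t}=0$), and the $\tau$-terms have the right sign only after one also uses that the supremum is over all such paths — more robustly, one argues by contradiction: if $\sup(U^{\alpha,g}_T-\tilde I)=-c<0$, then $\frac1{N_k}\log\E\exp\{N_k U^{\alpha,g}_T(\mu_{N_k},\theta_{N_k})\}\to -c<0$, contradicting that this quantity tends to $0$. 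Combining the two inequalities yields~(\ref{eqn:itilde-u}).

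The main obstacle I anticipate is the bookkeeping in the first step: verifying that the prelimit functional $\mathcal U^{\alpha,g}_{N,t}$ appearing in the genuine exponential martingale for $(\mu_N,Y_N)$ converges uniformly (with explicit $O(1/N)$ rate) to $U^{\alpha,g}_T(\mu_N,\theta_N)$ as defined in~(\ref{eqn:def-u}). This requires (a) correctly identifying that the environment generator $L$ enters the compensator ``frozen'' at $\mu_N(s)$ — which is legitimate because each jump of $Y_N$ leaves $\mu_N$ unchanged — so that, after replacing time integrals against $\mathbf 1_{\{Y_N(s)\in\cdot\}}ds$ by integrals against $\theta_N(ds\,dy)=m^N_s(dy)ds$, the $\gamma$-terms land exactly as in~(\ref{eqn:def-u}); (b) handling the discrete piecewise-constant integrator in~(\ref{eqn:integral-alpha-mu}) so that $\int_0^T\alpha_s(\mu_{N,s})d\mu_{N,s}$ matches the martingale's additive functional term-by-term across the fixed partition $t_0<\dots<t_n$; and (c) the Taylor expansion of $g$ and of $\alpha$ at the $1/N$-scale, controlling the remainder uniformly using $C^{1,1}$-regularity and the boundedness of rates. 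None of these is deep, but getting the compensator terms to align precisely with~(\ref{eqn:def-u}) is where the care is needed; everything after that is a standard Varadhan-lemma argument.
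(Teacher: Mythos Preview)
Your approach is correct and is essentially the paper's: construct the Dol\'eans--Dade exponential martingale for $(\mu_N,Y_N)$, use that its expectation is $1$, and apply Varadhan's lemma along the subsequence to the continuous bounded functional $U^{\alpha,g}_T$. Two small simplifications relative to your sketch: first, the paper writes the martingale directly as $\exp\{N\,U^{\alpha,g}_T(\mu_N,\theta_N)+V^g_T(\mu_N,Y_N)\}$, where $V^g_T$ (already defined in~(\ref{eqn:def-v})) gathers precisely the $g$-dependent terms that do \emph{not} carry a factor of $N$ and is therefore bounded --- this makes your ``main obstacle'' (uniform $O(1/N)$ control of $\mathcal U^{\alpha,g}_{N,T}-U^{\alpha,g}_T$) a one-line observation that $V^g_T/N\to 0$; second, Varadhan's lemma already gives the \emph{equality} $\lim\frac{1}{N_k}\log E\exp\{N_kU^{\alpha,g}_T\}=\sup(U^{\alpha,g}_T-\tilde I)$, so no separate lower-bound step is needed (and indeed your first attempt via the LLN path fails, since the $\tau$-terms make $U^{\alpha,g}_T(\mu^\circ,\theta^\circ)\le 0$ rather than $=0$).
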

\begin{proof}
Note that, since the transition rates are bounded (which is a consequence of the assumptions~\ref{assm:a2} and \ref{assm:b2}), 
\begin{align*}
N \left(\int_0^t \alpha_s(\mu_s) d\mu_{N,s} - \int_0^t \left\langle \alpha_s(\mu_s) , \int_\Y \Lambda^*_{\mu_{N,s},y} \mu_{N,s} \theta_N(dy ds) \right\rangle \right), t \geq 0,
\end{align*}
is an $\mathcal{F}_t$-martingale. Also, by It\^o's formula,
\begin{align*}
g_t(\mu_{N}(t),&Y_{N}(t))  - g_0(\mu_{N}(0),Y_{N}(0)) - \int_0^t  \frac{\partial g_s}{\partial s}(\mu_N(s),Y_N(s))ds \nonumber \\
& \qquad - \int_0^t  \sum_{(x,x^\prime) \in \EX} \biggr[ g_s\left(\mu_N(s) + \frac{\delta_{x^\prime}-\delta_x}{N},Y_N(s)\right) \nonumber \\
& \qquad \qquad  - g_s(\mu_N(s),Y_N(s)) \biggr]  \times N \mu_{N,s}(x)\lambda_{x,x^\prime}(\mu_N(s), Y_N(s)) ds  \nonumber \\
&\qquad - N \int_0^t  L_{\mu_{N}(s)}g_s(\mu_N(s), \cdot) (Y_N(s)) ds, t \geq 0,
\end{align*}
is an $\mathcal{F}_t$-martingale. Therefore, using the Dol\'eans-Dade exponential formula, it follows that 
\begin{align*}
\exp\{N U^{\alpha, g}_t(\mu_N, \theta_N) + V^g_t(\mu_N, Y_N)\}, t \geq 0,
\end{align*}
is an $\mathcal{F}_t$-martingale, and hence
\begin{align*}
E \exp\{N U^{\alpha, g}_T(\mu_N, \theta_N) + V^g_T(\mu_N, Y_N)\} = 1.
\end{align*}
Clearly, $U_T^{\alpha, g}(\cdot, \cdot)$ is continuous on $\DX \times \DY$, and since $g$ is continuously differentiable in the second argument, $V^g_T(\mu_N, Y_N)$ is bounded, and hence $V^{g}_T(\mu_N, Y_N) /N$ goes to $0$ $P$-a.s. Therefore, the result follows from an application of Varadhan's lemma along the subsequence $\{N_k\}_{k \geq 1}$ (see, for example,~\cite[Theorem 4.3.1]{dembo-zeitouni}).
\end{proof}

\section{The variational problem in $J$}
\label{section:variational-problem}
Motivated by the duality relation~(\ref{eqn:itilde-u}), we define our candidate rate function
\begin{align}
I^*(\mu, \theta) \coloneqq \sup_{\alpha, g} U_T^{\alpha, g}(\mu, \theta),
\label{eqn:istar}
\end{align}
where the supremum is taken over all functions $\alpha$ and $g$ that satisfy the conditions in Theorem~\ref{thm:rate-function-martingale}.

In this section, we study the above variational problem and show that, whenever $I^*(\mu, \theta)< +\infty$, $I^*(\mu, \theta)$ coincides with the RHS of~(\ref{eqn:rate-fn}) and that $I^*(\mu, \theta)$ can be expressed in a non-variational form using elements from suitable Orlicz spaces. We begin with a necessary condition on the elements in $\DX \times \DY$ whose $I^*$ is finite.
\begin{lemma}
\label{lemma:istarfinite}
If $I^*(\mu, \theta) < +\infty$, then the mapping $[0,T] \ni t \mapsto \mu_t \in \mx$ is absolutely continuous.
\end{lemma}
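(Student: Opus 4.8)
The plan is to use only the weak inequality $I^*(\mu,\theta)\ge U_T^{\alpha,g}(\mu,\theta)$ coming from the definition~(\ref{eqn:istar}), specialised to $g\equiv 0$ (which is admissible), together with a carefully chosen family of piecewise-constant-in-time test functions $\alpha$, and to deduce from it the $\varepsilon$--$\delta$ characterisation of absolute continuity of $t\mapsto\mu_t$. Recall that any $\theta\in\DY$ admits the disintegration $\theta(dy\,dt)=m_t(dy)\,dt$, so $U_T^{\alpha,0}(\mu,\theta)$ is well defined. Concretely, I would fix a finite family of pairwise disjoint open subintervals $(a_1,b_1),\dots,(a_m,b_m)$ of $[0,T)$ together with a parameter $\beta>0$, take the partition obtained from $\{0\}\cup\{a_k,b_k:1\le k\le m\}$, and define $\alpha$ by $\alpha_{a_k}\equiv\beta\,\mathrm{sgn}(\mu_{b_k}-\mu_{a_k})\in\R^{|\X|}$ (a constant function of the measure argument, hence admissible) and $\alpha_{t}\equiv 0$ at every other partition point. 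For this choice the ``stochastic integral'' term $\int_0^T\alpha_s(\mu_s)\,d\mu_s$ in~(\ref{eqn:def-u}) collapses to $\beta\sum_{k=1}^m\|\mu_{b_k}-\mu_{a_k}\|_1$, while the remaining terms of $U_T^{\alpha,0}(\mu,\theta)$ are error terms to be controlled.

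The next step is to bound those error terms uniformly. Since $\alpha_s$, and hence $D\alpha_s$, vanish off $\bigcup_k[a_k,b_k)$ and $\tau(0)=0$, the ``compensator'' term $\int_0^T\langle\alpha_s(\mu_s),\int_\Y\Lambda^*_{\mu_s,y}\mu_s\,m_s(dy)\rangle\,ds$ and the first $\tau$-term in~(\ref{eqn:def-u}) are integrals over $\bigcup_k(a_k,b_k)$ only; using $\|\alpha_s\|_\infty\le\beta$, $|D\alpha_s|\le 2\beta$, the monotonicity of $\tau$ on $[0,\infty)$, and the boundedness of $\lambda_{x,x^\prime}$ and of $\|\Lambda^*_{\mu_s,y}\mu_s\|$ guaranteed by~\ref{assm:a2} and~\ref{assm:b2}, they are bounded in absolute value by $\beta c_1\eta$ and $\tau(2\beta)c_2\eta$ respectively, where $\eta:=\sum_k(b_k-a_k)$ and $c_1,c_2<\infty$ depend only on the model data. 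The two $g$-terms vanish. Combining with $I^*(\mu,\theta)\ge U_T^{\alpha,0}(\mu,\theta)$ and dividing by $\beta$ gives the key estimate
\begin{align*}
\sum_{k=1}^{m}\|\mu_{b_k}-\mu_{a_k}\|_1 \;\le\; \frac{I^*(\mu,\theta)}{\beta} \;+\; c_1\,\eta \;+\; \frac{\tau(2\beta)}{\beta}\,c_2\,\eta .
\end{align*}

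To conclude, given $\varepsilon>0$, since $I^*(\mu,\theta)<+\infty$ I would first choose $\beta$ so large that $I^*(\mu,\theta)/\beta<\varepsilon/2$, and then, with $\beta$ now fixed, choose $\delta>0$ so small that $c_1\delta+\beta^{-1}\tau(2\beta)c_2\delta<\varepsilon/2$; for any disjoint family with $\eta<\delta$ the right-hand side above is then $<\varepsilon$. This is exactly the $\varepsilon$--$\delta$ definition of absolute continuity of $t\mapsto\mu_t$ (and it forces continuity: a jump at an interior point would make the left-hand side bounded below by a positive constant for all $\delta$). I expect the only genuinely delicate point to be purely bookkeeping: matching the rigid form of the admissible $\alpha$ in~(\ref{eqn:def-u}) --- the partition must start at $0$, be strictly increasing, and end strictly before $T$ --- to an arbitrary disjoint family of intervals, and in particular treating the right endpoint $T$ by a routine limiting argument using the left limit $\mu_{T-}$; the uniform control of the error terms is then immediate, so there is no essential analytic difficulty here.
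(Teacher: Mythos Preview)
Your argument is correct and follows the same basic strategy as the paper's proof: specialise to $g\equiv 0$ with $\alpha$ a (piecewise-constant) function of time alone, exploit the inequality $I^*(\mu,\theta)\ge U_T^{\alpha,0}(\mu,\theta)$, and scale $\alpha$ by a large parameter to control the increments of $\mu$. The difference is only in presentation. The paper replaces $\alpha$ by $c\alpha$, divides by $c$, and chooses $c=1/\|D\alpha\|_{L^\tau}$ to obtain the bound
\[
\int_0^T \alpha_t\,d\mu_t \;\le\; \|D\alpha\|_{L^\tau([0,T]\times\X\times\EX,\bar{\lambda}\mu_t(dx)dt)}\bigl(I^*(\mu,\theta)+1\bigr) \;+\; \int_0^T\langle\alpha_t,\bar{\Lambda}^*_{\mu_t,m_t}\mu_t\rangle\,dt,
\]
and then asserts that absolute continuity is clear from this. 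You instead pick a concrete $\alpha$ adapted to a given disjoint family of intervals, carry out the scaling optimisation by hand, and verify the $\varepsilon$--$\delta$ definition directly. Your route avoids any reference to Orlicz spaces and is fully self-contained, at the price of the small amount of bookkeeping you identify (ordering the partition, handling the endpoint $T$); the paper's route is terser but leans on the Orlicz framework and leaves the final step implicit. Either way the content is the same scaling argument.
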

\begin{proof}
Take $g \equiv 0$ and $\alpha$ to be a function of only time (and denote this by $\alpha_t$) in the definition of $U_t^{\alpha, g}$ in~(\ref{eqn:def-u}). Then~(\ref{eqn:istar}) becomes
\begin{align*}
I^*(\mu, \theta) & = \sup_{\alpha, g}  U^{\alpha, g}_T(\mu, \theta)\\
& \geq  \int_0^T  \alpha_t d\mu_t -  \int_0^T \langle \alpha_t, \bar{\Lambda}_{\mu_t, m_t}^* \mu_t \rangle dt \nonumber \\
& \qquad - \int_0^T \int_{\X \times \EX} \tau(D\alpha_t(x,\Delta)) \bar{\lambda}_{x,x+d\Delta}(\mu_t, m_t)  \mu_t(dx)  dt. \\
\end{align*}
Therefore,
\begin{align*}
\int_0^T \alpha_t d\mu_t & \leq I^*(\mu, \theta) + \int_0^T\langle \alpha_t, \bar{\Lambda}_{\mu_t, m_t}^* \mu_t \rangle dt  \\
& \qquad + \int_0^T \int_{\X \times \EX} \tau(D\alpha_t(x,\Delta)) \bar{\lambda}_{x,x+d\Delta}(\mu_t, m_t)  \mu_t(dx)  dt.
\end{align*}
Replacing $c \alpha_t$ in place of $\alpha_t$ in the above equation, dividing throughout by $c$ and choosing $c = 1/ \|D\alpha\|_{L^\tau([0,T]\times \X \times \EX,  \bar{\lambda}_{x,x+d\Delta}(\mu_t,m_t)\mu_t(dx)dt)}$ (i.e. the inverse of the norm of the function $\alpha_t(x, x+\Delta)$ in the Orlicz space $L^\tau([0,T]\times \X \times \EX,  \bar{\lambda}_{x,x+d\Delta}(\mu_t,m_t)\mu_t(dx)dt)$), we have
\begin{align*}
\int_0^T \alpha_t d\mu_t & \leq \|D\alpha\|_{L^\tau([0,T]\times \X \times \EX, \bar{\lambda}_{x,x+d\Delta}(\mu_t,m_t)\mu_t(dx)dt)} (I^*(\mu, \theta)+1) \\
& \qquad + \int_0^T\langle \alpha_t, \bar{\Lambda}_{\mu_t, m_t}^* \mu_t \rangle dt.
\end{align*}
Since $\alpha_t$ is arbitrary, from the definition of $\int_0^t \alpha_t d\mu_t$ in~(\ref{eqn:integral-alpha-mu}), it is clear that the mapping $[0,T] \ni t \mapsto \mu_t \in \mx$ is absolutely continuous.
\end{proof}
We also need the following lemma, whose proof can be found in Puhalskii~\cite[Lemma~A.2, page 460]{puhalskii-01}.
\begin{lemma}
\label{lemma:path-optimisation}
Let $\mathcal{V}$ be a complete separable metric space, and let $\mathcal{U}$ be a dense subspace of $\mathcal{V}$. Let $f(t, v)$ be a function defined on $[0,T] \times \mathcal{V}$ that is measurable in $t$ and continuous in $v$. Further, if $f(t, \beta(t))$ is locally integrable with respect to the Lebesgue measure on $[0,T]$ for all measurable functions $\beta : [0,T] \to \mathcal{U}$, then
\begin{align*}
\sup_{\beta(\cdot) } \int_0^T f(t, \beta(t)) dt= \int_0^T \sup_{y \in \mathcal{U}} f(t, y) dt,
\end{align*}
where the supremum in the LHS is taken over all  $\mathcal{U}$-valued measurable functions $\beta(\cdot)$.
\end{lemma}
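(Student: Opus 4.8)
The statement is a measurable-selection / interchange-of-supremum-and-integral result, and the plan is to prove the two inequalities separately, the easy one first. The inequality $\sup_{\beta(\cdot)} \int_0^T f(t,\beta(t))\, dt \leq \int_0^T \sup_{y \in \mathcal{U}} f(t,y)\, dt$ is immediate: for any fixed $\mathcal{U}$-valued measurable $\beta$, we have $f(t,\beta(t)) \leq \sup_{y\in\mathcal{U}} f(t,y)$ pointwise in $t$, and integrating preserves this (the right-hand side is a well-defined element of $[-\infty,+\infty]$ since $t \mapsto \sup_{y\in\mathcal{U}} f(t,y)$ is measurable — being a countable supremum once we fix a countable dense subset of $\mathcal{U}$, which exists as $\mathcal{V}$ is separable and $\mathcal{U}$ is dense in it). Taking the supremum over $\beta$ on the left gives the claimed bound.

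For the reverse inequality, first reduce to a countable supremum. Fix a countable set $\{y_j\}_{j\geq 1} \subset \mathcal{U}$ that is dense in $\mathcal{V}$; by continuity of $f(t,\cdot)$ for each fixed $t$, $\sup_{y\in\mathcal{U}} f(t,y) = \sup_{j} f(t,y_j)$ for every $t$. Now fix $\varepsilon>0$ and, for each $t$, let $j(t)$ be the smallest index $j$ with $f(t,y_j) \geq \min(\sup_k f(t,y_k), 1/\varepsilon) - \varepsilon$ (the truncation at $1/\varepsilon$ handles the case $\sup_k f(t,y_k) = +\infty$). The map $t \mapsto j(t)$ is measurable because $\{t : j(t) = j\}$ is a Boolean combination of the measurable sets $\{t : f(t,y_i) \geq \cdots\}$, which are measurable since $f(\cdot,y_i)$ is measurable in $t$. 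Then $\beta_\varepsilon(t) := y_{j(t)}$ is a $\mathcal{U}$-valued measurable function, and $f(t,\beta_\varepsilon(t)) \geq \min(\sup_{y\in\mathcal{U}} f(t,y), 1/\varepsilon) - \varepsilon$ for all $t$. Since $f(t,\beta_\varepsilon(t))$ is locally integrable by hypothesis, we may integrate:
\begin{align*}
\sup_{\beta(\cdot)} \int_0^T f(t,\beta(t))\, dt \;\geq\; \int_0^T f(t,\beta_\varepsilon(t))\, dt \;\geq\; \int_0^T \Big(\min\big(\sup_{y\in\mathcal{U}} f(t,y),\, 1/\varepsilon\big) - \varepsilon\Big)\, dt.
\end{align*}
Letting $\varepsilon \downarrow 0$ and applying the monotone convergence theorem to $\min(\sup_{y} f(t,y), 1/\varepsilon)$ (which increases to $\sup_y f(t,y)$, after splitting off the negative part, which is bounded below by the integrable function $f(\cdot, y_1)$ to control integrability) yields $\sup_{\beta(\cdot)} \int_0^T f(t,\beta(t))\, dt \geq \int_0^T \sup_{y\in\mathcal{U}} f(t,y)\, dt$, completing the proof.

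The main obstacle is the measurability of the selection $t \mapsto \beta_\varepsilon(t)$ together with handling the case where $\int_0^T \sup_y f(t,y)\, dt = +\infty$; the truncation device sidesteps any integrability subtlety in constructing the selector, and the only genuine input beyond bookkeeping is that a countable dense subset suffices, which is exactly where separability of $\mathcal{V}$ and density of $\mathcal{U}$ are used. Since the excerpt attributes this to Puhalskii~\cite[Lemma~A.2, page 460]{puhalskii-01}, one could alternatively just cite it; the sketch above is the self-contained route.
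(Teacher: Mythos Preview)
Your proof is correct; the paper itself does not prove this lemma but simply refers the reader to Puhalskii~\cite[Lemma~A.2, page 460]{puhalskii-01}, so your self-contained argument via a countable dense subset and a smallest-index measurable selector goes beyond what the paper provides. The only minor wrinkle is the phrasing around the dominated lower bound in the monotone-convergence step --- the clean way to say it is that $(\min(\sup_y f(t,y),1/\varepsilon))^- = (\sup_y f(t,y))^- \leq (f(t,y_1))^-$, which is integrable by the hypothesis applied to $\beta \equiv y_1$ --- but the idea is right and the argument goes through.
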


Let us introduce some notations. Let $\dcx$ (resp.~$\dcy$) denote the space of functions $D\alpha$ (resp.~$Dg$) on $[0,T]  \times \X \times \EX$ (resp.~$[0,T]  \times \Y \times \EY$) such that $\alpha \in C^1([0,T] \times \X)$ (resp.~$g \in C^1([0,T] \times \Y)$). (For economy of notation in the sequel, we shall also view $\mathbb{R}$-valued functions on $[0,T] \times \X$ as $\mathbb{R}^{|\X|}$-valued functions on $[0,T]$.) Given $(\mu, \theta) \in \DX \times \DY$, let $\mathcal{H}_\X(\mu, \theta)$ denote the $L^{\tau^*}([0,T]\times \X \times \EX, \bar{\lambda}_{x,x+d\Delta}(\mu_t,m_t) \mu_t(dx) dt)$-closure of functions of the form $ \{\exp\{D\alpha\}-1, D\alpha \in \dcx\}$ and let $\mathcal{H}_\Y(\mu, \theta)$ denote the $L^{\tau^*}([0,T]\times \Y \times \EY, \gamma_{y,y+d\Delta}(\mu_t) m_t(dy) dt)$-closure of functions of the form $\{\exp\{Dg\}-1, Dg \in \dcy\}$, where $\theta$ admits the representation $\theta(dydt) = m_t(dy) dt$ for some $m_t \in M_1(\Y)$ for almost all $t \in [0,T]$. We now prove the main result of this section.
\begin{theorem}
\label{thm:varproblem}
Suppose that $(\mu, \theta) \in \DX \times \DY$ is such that $I^*(\mu, \theta) < \infty$. Then, we have
\begin{align}
I^*(\mu, \theta) &= \int_{[0,T]} \left\{ \sup_{\alpha \in \mathbb{R}^{|\X|}} \bigg( \left\langle \alpha, (\dot{\mu}_t -  \bar{\Lambda}^*_{\mu_t, m_t} \mu_t)  \right\rangle \right. \nonumber \\
& \qquad \qquad  \left. - \int_{\X \times \EX} \tau(D\alpha(x,\Delta)) \bar{\lambda}_{x,x+d\Delta}(\mu_t, m_t) \mu_t(dx) \bigg) \right. \nonumber\\ 
& \qquad + \left.   \sup_{g \in B(\Y)} \int_\Y \biggl( -L_{\mu_t}g (y) \right. \nonumber  \\
&\qquad \qquad   \left. - \int_{\EY}\tau(Dg(y,\Delta)) \gamma_{y,y+d\Delta}(\mu_t) \biggr) m_t(dy) \right\} dt,
\label{eqn:istar-supinside}
\end{align}
where $m_t \in M_1(\Y)$ is such that $\theta$, when viewed as a measure on $[0,T] \times \Y$, admits the representation $\theta(dy ds) = m_t(dy) ds$ for almost all $t \in [0,T]$. Moreover, there exist functions $h_\X \in \mathcal{H}_\X(\mu,\theta)$ and  $h_\Y \in \mathcal{H}_\Y(\mu,\theta)$ that satisfy
\begin{align}
\int_{[0,T]\times \X \times \EX } & h_\X D\alpha \bar{\lambda}_{x,x+d\Delta}(\mu_t, m_t)\mu_t(dx) dt \nonumber \\
&  =  \int_{[0,T]}  \left\langle \alpha_t, (\dot{\mu}_t -  \bar{\Lambda}^*_{\mu_t, m_t} \mu_t)  \right\rangle dt, \, \forall \alpha	\in B([0,T]\times\X),
\label{eqn:cond-hx}
\end{align}
and
\begin{align}
\int_{[0,T]\times \Y \times \EY } & h_\Y Dg \bar{\gamma}_{y,y+d\Delta}(\mu_t) m_t(dy) dt \nonumber \\
& = - \int_{[0,T]\times \Y \times \EY }  Dg \bar{\gamma}_{y,y+d\Delta}(\mu_t)m_t(dy) dt, \, \forall g \in B([0,T]\times \Y),
\label{eqn:cond-hy}
\end{align}
respectively, $h_\X \in L^{\tau^*}([0,T]\times \X \times \EX, \bar{\lambda}_{x,x+d\Delta}\mu_t(dx) dt)$ and $h_\Y \in L^{\tau^*}([0,T]\times \Y \times \EY, \gamma_{y,y+d\Delta}(\mu_t) m_t(dy) dt)$, and $I^*(\mu, \theta)$ admits the representation
\begin{align}
I^*(\mu, \theta) &= \int_{[0,T]\times \X \times \EX} \tau^*(h_\X) \bar{\lambda}_{x,x+d\Delta}(\mu_t, m_t) \mu_t(dx) dt \nonumber \\
&\qquad + \int_{[0,T]\times \Y \times \EY} \tau^*(h_\Y)  \gamma_{y,y+d\Delta}(\mu_t) m_t(dy) dt.
\label{eqn:ratefn-nonvarform}
\end{align}
Furthermore, if $\inf_{t \in [0,T]} \min_{x \in \X} \mu_t(x) > 0$ and $ \inf_{t \in [0,T]} \min_{y \in \Y}m_t(y) > 0$, the suprema in~(\ref{eqn:istar-supinside}) over $\alpha$ and $g$ are attained by $\hat{\alpha}_t \in \mathbb{R}^{|\X|}$ and $\hat{g}_t \in B(\Y)$ that satisfy
\begin{align}
\dot{\mu}_t(x)& -   (\bar{\Lambda}^*_{\mu_t, m_t} \mu_t)(x)  \nonumber \\
& + \mu_t(x) \sum_{\substack{x^\prime\in \X: \\ (x,x^\prime) \in \EX}} (\exp\{\hat{\alpha}_t(x^\prime) - \hat{\alpha}_t(x)\} - 1) \bar{\lambda}_{x,x^\prime}  (\mu_t, m_t) \nonumber \\
& - \sum_{\substack{x_0 \in \X: \\ (x_0, x) \in \EX}}  \mu_t(x_0)  (\exp\{\hat{\alpha}_t(x) - \hat{\alpha}_t(x_0)\} -1) \bar{\lambda}_{x_0,x}(\mu_t,m_t)= 0,\, \, \forall x \in \X,
\label{eqn:cond-alpha}
\end{align}
and
\begin{align}
m_t(y) & \sum_{\substack{y^\prime \in \Y : \\ (y,y^\prime) \in \EY}} \exp\{\hat{g}_t(y^\prime) - \hat{g}_t(y)\} \gamma_{y,y^\prime} (\mu_t) \nonumber \\
& - \sum_{\substack{y_0 \in \Y:\\ (y_0, y) \in \EY}} m_t(y_0) \exp\{\hat{g}_t(y) - \hat{g}_t( y_0)\}\gamma_{y_0, y}(\mu_t) = 0, \, \, \forall y \in \Y,
\label{eqn:cond-g}
\end{align}
for almost all $t \in [0,T]$, respectively.
\end{theorem}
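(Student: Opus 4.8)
The plan is to prove Theorem~\ref{thm:varproblem} in three stages: first establish the variational identity~(\ref{eqn:istar-supinside}), then extract the Orlicz-space functions $h_\X, h_\Y$ and the nonvariational representation~(\ref{eqn:ratefn-nonvarform}), and finally derive the Euler--Lagrange equations~(\ref{eqn:cond-alpha})--(\ref{eqn:cond-g}) under the nondegeneracy hypothesis. For the first stage, I would start from the definition $I^*(\mu,\theta) = \sup_{\alpha, g} U_T^{\alpha,g}(\mu,\theta)$ in~(\ref{eqn:istar}). Lemma~\ref{lemma:istarfinite} guarantees $t \mapsto \mu_t$ is absolutely continuous, so $\int_0^T \alpha_s(\mu_s) d\mu_s = \int_0^T \langle \alpha_t(\mu_t), \dot\mu_t \rangle dt$ for the relevant piecewise-constant $\alpha$. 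Since the functions $\alpha$ and $g$ enter $U_T^{\alpha,g}$ only through the pointwise-in-time values $\alpha_t(\mu_t)$ and $g_t(\mu_t,\cdot)$ (and their finite differences), and since the $L_{\mu_t}g$ term together with the $\tau(Dg)$ term depends on $g$ only through $Dg_t(\mu_t, \cdot, \cdot)$, I can decouple the two suprema: the $\alpha$-part and the $g$-part of $U_T^{\alpha,g}$ are additively separate. I would then invoke Lemma~\ref{lemma:path-optimisation} (with $\mathcal{V}$ the appropriate function space on $\X$ resp.~$B(\Y)$, $\mathcal{U}$ a countable dense subspace of smooth functions, and $f(t,\cdot)$ the integrand) to push the supremum inside the time integral, obtaining~(\ref{eqn:istar-supinside}). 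The local integrability hypothesis of Lemma~\ref{lemma:path-optimisation} follows from boundedness of the rates (assumptions~\ref{assm:a2}, \ref{assm:b2}) together with $\dot\mu \in L^1$.

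For the second stage, I would fix $t$ and analyse the inner suprema in~(\ref{eqn:istar-supinside}) as two convex variational problems over Orlicz spaces, following L\'eonard~\cite[Sections~4--6]{leonard-95-1}. Consider the functional $\alpha \mapsto \langle \alpha, \dot\mu_t - \bar\Lambda^*_{\mu_t,m_t}\mu_t\rangle - \int_{\X\times\EX}\tau(D\alpha(x,\Delta))\bar\lambda_{x,x+d\Delta}(\mu_t,m_t)\mu_t(dx)$. Writing it as a linear form minus the integral of $\tau$ against the finite measure $\bar\lambda_{x,x+d\Delta}(\mu_t,m_t)\mu_t(dx)$, its supremum over $D\alpha \in D\mathcal{C}_\X$ is, by the Fenchel--Young/Rockafellar duality for integral functionals on Orlicz spaces, equal to $\int \tau^*(h_\X(t,\cdot))\,\bar\lambda\,\mu_t(dx)$ where $h_\X(t,\cdot)$ lies in the $L^{\tau^*}$-closure $\mathcal{H}_\X(\mu,\theta)$ of $\{\exp\{D\alpha\}-1\}$ and is characterised by the orthogonality/extremality condition~(\ref{eqn:cond-hx}) — this is exactly the statement that $h_\X$ is the (sub)gradient realizing the supremum, paired against test functions $\alpha$. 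The identical argument applied to the $g$-supremum, noting that $-L_{\mu_t}g(y)m_t(dy) = -\int_{\EY} Dg(y,\Delta)\gamma_{y,y+d\Delta}(\mu_t)m_t(dy)$, yields $h_\Y \in \mathcal{H}_\Y(\mu,\theta)$ with~(\ref{eqn:cond-hy}). Integrating over $t$ and using that $I^*(\mu,\theta) < \infty$ (so the Orlicz norms are finite and the duality gap is zero) gives~(\ref{eqn:ratefn-nonvarform}). The finiteness $h_\X \in L^{\tau^*}$, $h_\Y \in L^{\tau^*}$ is part of what the duality delivers when the primal value is finite.

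For the third stage, assume $\inf_{t}\min_x \mu_t(x) > 0$ and $\inf_t \min_y m_t(y) > 0$. Then the measures $\bar\lambda_{x,x+d\Delta}(\mu_t,m_t)\mu_t(dx)$ and $\gamma_{y,y+d\Delta}(\mu_t)m_t(dy)$ are bounded below, so the Orlicz spaces reduce (essentially) to finite-dimensional spaces at each time $t$ and the inner suprema in~(\ref{eqn:istar-supinside}) are suprema of smooth strictly concave functions of $\hat\alpha_t \in \mathbb{R}^{|\X|}$ (modulo constants, since only $D\hat\alpha$ matters) and $\hat g_t \in B(\Y) = \mathbb{R}^{|\Y|}$. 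The first-order stationarity conditions, obtained by differentiating the integrand in~(\ref{eqn:istar-supinside}) in the $x$-th coordinate of $\alpha$ using $\tau'(u) = e^u - 1$ and collecting the outgoing edges $(x,x')$ and incoming edges $(x_0,x)$, give precisely~(\ref{eqn:cond-alpha}); the analogous computation for $g$ with $\tau'(Dg) = e^{Dg}-1$ gives~(\ref{eqn:cond-g}). Existence of maximizers for a.e.~$t$ follows because the concave objective is coercive on the quotient space (thanks to irreducibility of $(\X,\EX)$ and $(\Y,\EY)$ — assumptions~\ref{assm:a1},~\ref{assm:b1} — which prevents the objective from being unbounded along any nonconstant direction of $D\alpha$ resp.~$Dg$) and upper semicontinuous; measurability of $t \mapsto (\hat\alpha_t,\hat g_t)$ is handled via a measurable selection theorem or the parametric-continuity results of Sundaram~\cite[Chapter~9]{sundaram-optimisation}.

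I expect the main obstacle to be the second stage — rigorously justifying the Orlicz-space duality and the extremality characterisations~(\ref{eqn:cond-hx})--(\ref{eqn:cond-hy}), in particular showing that the supremum over the smooth class $D\mathcal{C}_\X$ (rather than all of $L^\tau$) still attains the dual value and that the maximizing sequence $\exp\{D\alpha^{(n)}\}-1$ converges in $L^{\tau^*}$ to an $h_\X$ satisfying the stated orthogonality relation. This requires care with the non-reflexivity of Orlicz spaces built on $\tau$ (which does not satisfy the $\Delta_2$ condition) and with the fact that, without the nondegeneracy hypothesis, $h_\X$ need only lie in the closure $\mathcal{H}_\X$ and the maximizer need not exist pointwise — this is precisely the boundary blow-up phenomenon flagged in the introduction. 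I would lean on L\'eonard's convex-analytic machinery~\cite{leonard-95-1} to close this gap, verifying the hypotheses (essentially a constraint-qualification/interior-point condition coming from $I^*(\mu,\theta) < \infty$) needed for the duality to hold without a gap.
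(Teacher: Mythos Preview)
Your proposal is correct and follows essentially the same route as the paper's proof: invoke Lemma~\ref{lemma:istarfinite} for absolute continuity, reduce to bounded measurable $\alpha_t$ and $g_t$ (since $\mu$ is fixed), apply Lemma~\ref{lemma:path-optimisation} to interchange supremum and integral, then defer the Orlicz-duality representation~(\ref{eqn:ratefn-nonvarform}) and conditions~(\ref{eqn:cond-hx})--(\ref{eqn:cond-hy}) to L\'eonard's programme~\cite[Sections~5--6]{leonard-95-1}, and finally read off~(\ref{eqn:cond-alpha})--(\ref{eqn:cond-g}) as first-order conditions of the concave finite-dimensional problems. Your additional remarks on coercivity via irreducibility and on measurable selection go slightly beyond what this theorem requires (the paper only asserts existence of $\hat\alpha_t,\hat g_t$ for a.e.\ $t$ here; continuity in $u$ and measurable selection are deferred to Theorem~\ref{thm:muhat-bdd-away-from-0}), but they are correct and harmless.
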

\begin{proof}
For the first part of the theorem, we shall make use of Lemma~\ref{lemma:path-optimisation}. Note that, by Lemma~\ref{lemma:istarfinite}, we have that the mapping $[0,T] \ni t \mapsto \mu_t \in \mx$ is absolutely continuous and $\theta$ admits the representation $\theta(dy dt) = m_t(dy) dt$ where $m_t \in M_1(\Y)$ for almost all $t \in [0,T]$. Therefore, for each $t\geq 0$, $U_t^{\alpha, g}$ in~(\ref{eqn:def-u}) can be written as
\begin{align*}
U^{\alpha, g}_t(\mu, \theta) & = \int_0^t  \langle \alpha_s(\mu_s) , \dot{\mu}_s \rangle ds -  \int_0^t \langle \alpha_s(\mu_s) ,  \bar{\Lambda}_{\mu_s, m_s}^* \mu_s  \rangle ds  \nonumber \\
& \qquad - \int_0^t \int_{\X \times \EX} \tau(D\alpha_s(\mu_s)(x,\Delta)) \bar{\lambda}_{x,x+d\Delta}(\mu_s, m_s)  \mu_s(dx) ds \nonumber \\
&  \qquad - \int_0^t \int_\Y \biggr(L_{\mu_s}g_s(\mu_s,\cdot)(y)  \\
&  \qquad \qquad + \int_{\EY} \tau(Dg_s(\mu_s,y,\Delta)) \gamma_{y,y+d\Delta}(\mu_s)  \biggr) m_s(dy) ds,
\end{align*}
where $\alpha$ and $g$ be satisfy the requirements in the definition of $U_t^{\alpha, g}$ in~(\ref{eqn:def-u}). Thus,
\begin{align*}
I^*(\mu, \theta) & = \sup_{\alpha} \int_{[0,T]} \biggr(\langle \alpha_t(\mu_t) , \dot{\mu}_t \rangle -  \langle \alpha_t(\mu_t) ,  \bar{\Lambda}_{\mu_t, m_t}^* \mu_t \rangle  \nonumber \\
&  \qquad \qquad -  \int_{\X \times \EX} \tau(D\alpha_t(\mu_t)(x,\Delta)) \bar{\lambda}_{x,x+d\Delta}(\mu_t, m_t)  \mu_t(dx) \nonumber \biggr) dt \\
& \qquad  + \sup_{g} \int_{[0,T]} \int_\Y \biggr(-L_{\mu_t}g_t(\mu_t, \cdot)(y) \\
& \qquad \qquad \qquad - \int_{\EY} \tau(Dg_t(\mu_t,y,\Delta)) \gamma_{y,y+d\Delta}(\mu_t)  \biggr) m_t(dy) dt
\end{align*}
where the supremum is taken over all functions $\alpha$ and $g$ that satisfy the conditions in the definition of $U_t^{\alpha, g}$ in~(\ref{eqn:def-u}). Note that, since $\mu$ is kept fixed, an approximation argument using mollifiers implies that the above supremum over $\alpha$ can be replaced by supremum over $\alpha_s$, where $\alpha_s$ is any $\mathbb{R}^{|\X|}$-valued bounded measurable function on $[0,T]$. Once again, since $\mu$ is fixed, we can replace the supremum over $g \in C^{1,1}([0,T],M_1(\X) \times \Y)$ with the supremum over $g$ where $g$ is any bounded measurable function on $[0,T] \times \Y$. Therefore,
\begin{align*}
I^*(\mu, \theta) & = \sup_{\alpha} \int_{[0,T]} \biggr(\langle \alpha_t(\mu_t) , \dot{\mu}_t \rangle  -  \langle \alpha_t(\mu_t) ,  \bar{\Lambda}_{\mu_t, m_t}^* \mu_t \rangle   \nonumber \\
&  \qquad \qquad -  \int_{\X \times \EX} \tau(D\alpha_t(\mu_t)(x,\Delta)) \bar{\lambda}_{x,x+d\Delta}(\mu_t, m_t)  \mu_t(dx) \nonumber \biggr) dt \\
& \qquad  + \sup_{g} \int_{[0,T]} \int_\Y \biggr(-L_{\mu_t}g_t(\mu_t, \cdot)(y) \\
& \qquad \qquad \qquad - \int_{\EY} \tau(Dg_t(\mu_t,y,\Delta)) \gamma_{y,y+d\Delta}(\mu_t)  \biggr) m_t(dy) dt
\end{align*}
where the supremum is taken over bounded measurable functions $\alpha:[0,T] \to \mathbb{R}^{|\X|}$ and $g:[0,T] \times \Y \to \mathbb{R}$. We can now apply Lemma~\ref{lemma:path-optimisation} to conclude that $I^*(\mu, \theta)$ is given by~(\ref{eqn:istar-supinside}).

We obtain the existence of functions $h_\X \in \mathcal{H}_\X(\mu,\theta)$ and $h_\Y\in \mathcal{H}_\Y(\mu, \theta)$ that satisfy the conditions~(\ref{eqn:cond-hx}) and~(\ref{eqn:cond-hy}) and the non-variational representation of $I^*$ in~(\ref{eqn:ratefn-nonvarform}) by carrying out the convex analytic programme of L\'eonard~\cite[Sections~5-6]{leonard-95-1} to the bounded linear functionals
\begin{align*}
\alpha \mapsto \int_{[0,T]}   \left\langle \alpha, (\dot{\mu}_t -  \bar{\Lambda}^*_{\mu_t, m_t} \mu_t)  \right\rangle dt \nonumber
\end{align*}
and 
\begin{align*}
g \mapsto \int_{[0,T]\times \Y \times \EY}  \big(g(y+\Delta) - g(y)\big) \gamma_{y,y+d\Delta}(\mu_t) m_t(dy) dt
\end{align*}
on  the closure of $\{D\alpha, \alpha \in B([0,T]\times\X)\}$ and $\{Dg, g \in B([0,T]\times \Y)\}$ in the Orlicz spaces $L^{\tau} ([0,T]\times \X \times \EX, \bar{\lambda}_{x,x+d\Delta}(\mu_t,m_t) \mu_t(dx) dt) $ and $L^{\tau}([0,T]\times \Y \times \EY, \gamma_{y,y+d\Delta}(\mu_t) m_t(dy) dt)$ respectively; the proof follows verbatim from L\'eonard~\cite{leonard-95-1} to our case, and we omit the details here.

Finally, to show the existence of supremisers $\hat{\alpha}_t$ and $\hat{g}$ in~(\ref{eqn:istar-supinside}) and the conditions~(\ref{eqn:cond-alpha}) and~(\ref{eqn:cond-g}) in the case when $\inf_{t \in [0,T]}\min_{x \in \X} \mu_t(x) >0$ and $\inf_{t \in [0,T]} \min_{y \in \Y} m_t(y) > 0$, note that, for each $t \in [0,T]$ for which $\dot{\mu}_t$ exists, the mappings
\begin{align}
\alpha_t \mapsto  \left\langle \alpha_t, (\dot{\mu}_t -  \bar{\Lambda}^*_{\mu_t, m_t} \mu_t)  \right\rangle - \int_{\X \times \EX} \tau(D\alpha_t(x,\Delta)) \bar{\lambda}_{x,x+d\Delta}(\mu_t, m_t) \mu_t(dx)
\label{eqn:map-alpha}
\end{align}
and, viewing $g_t$ as an element of $\mathbb{R}^{|\Y|}$,
\begin{align}
g_t \mapsto -\int_{\Y}  \biggr( L_{\mu_t}g_t (y) +\int_{\EY}\tau(Dg_t(y,\Delta)) \gamma_{y,y+d\Delta}(\mu_t) \biggr) m_t(y) 
\label{eqn:map-g}
\end{align}
are concave on $\mathbb{R}^{|\X|}$ and $\mathbb{R}^{|\Y|}$ respectively. Therefore, there is an $\hat{\alpha}_t$ and a $\hat{g}_t$ that attain the suprema in~(\ref{eqn:istar-supinside}); the conditions in~(\ref{eqn:cond-alpha}) and~(\ref{eqn:cond-g}) on $\hat{\alpha}_t$ and $\hat{g}_t$ easily follow by writing down the first order conditions for optimality of the mappings in~(\ref{eqn:map-alpha}) and~(\ref{eqn:map-g}) respectively.
\end{proof}

\section{Characterisation of the subsequential rate function for sufficiently regular elements}
\label{section:uniqueness-rate-function}
Let $\tilde{I}:\DX \times \DY \to [0, + \infty]$ be a subsequential rate function for the family $\{(\mu_N,\theta_N)\}_{N \geq 1}$, i.e., for some sequence $\{N_k\}_{k \geq 1}$ of $\mathbb{N}$, $\{(\mu_{N_k},\theta_{N_k})\}_{k \geq 1}$ satisfies the large deviation principle with rate function $\tilde{I}$. In addition suppose that, for some $\nu \in \mx$, $\tilde{I}(\mu, \theta) = + \infty$ unless $\mu_0 = \nu$. In this section, we characterise $\tilde{I}$ for sufficiently regular elements in $\DX \times \DY$, i.e., we show that $\tilde{I}(\hat{\mu}, \hat{\theta}) = I^*(\hat{\mu}, \hat{\theta})$ for all elements $(\hat{\mu}, \hat{\theta}) \in \DX \times \DY$ that satisfy certain regularity properties, where $I^*$ is given by~(\ref{eqn:istar-supinside}) (see Theorem~\ref{thm:muhat-bdd-away-from-0}).
\subsection{An extension of Theorem~\ref{thm:rate-function-martingale}}
We first extend the conclusion of Theorem~\ref{thm:rate-function-martingale} to a larger class of functions $\alpha$ and $g$.
Let $\Gamma \subset \DX \times \DY$ denote the set of points $(\mu, \theta)$ such that the mapping $[0,T] \ni t \mapsto \mu_t \in \mx$ is absolutely continuous, and $\theta$, when viewed as a measure on $[0,T] \times \Y$ admits the representation $\theta(dy dt) =m_t(dy) dt$ where $m_t \in M_1(\Y)$ for almost all $t \in [0,T]$. In particular, $(\mu, \theta) \in \Gamma$ implies that the mapping $t \mapsto \mu_t$ is differentiable for almost all $t \in [0,T]$. Given bounded measurable functions $\alpha : [0,T]\times \mx \to \mathbb{R}^{|\X|}$ and $g : [0,T] \times \mx \times \Y \to \mathbb{R}$ such that for all $t \in [0,T]$ and $y \in \Y$ both $\alpha(t, \cdot)$ and $g(t, \cdot, y)$ are continuous on $\mx$, we define, with a slight abuse of notation, for $(\mu, \theta) \in \Gamma$ and $t \in [0,T]$, 
\begin{align}
U^{\alpha, g}_t(\mu, \theta) & \coloneqq  \int_{[0,t]}  \biggr\{ \langle \alpha_s(\mu_s), \dot{\mu}_s- \bar{\Lambda}_{\mu_s, m_s}^* \mu_s\rangle  \nonumber \\
& \qquad \qquad - \int_{\X \times \EX } \tau(D\alpha_s(\mu_s)(x,\Delta)) \bar{\lambda}_{x,x+d\Delta}(\mu_s, m_s)  \mu_s(dx)  \nonumber \\
& \qquad -\int_\Y \biggr(L_{\mu_s}g_s(\mu_s, \cdot) (y) \nonumber \\
& \qquad \qquad + \int_{\EY} \tau(Dg_s(\mu_s,y,\Delta)) \gamma_{y,y+d\Delta}(\mu_s)  \biggr) m_s(dy) \biggr\} ds.
\label{eqn:def-u-extension}
\end{align}
Note that the boundedness of $\alpha$ and $g$ in the above definition implies that  $D\alpha \in L^{\tau}([0,T]\times \X \times \EX,  \bar{\lambda}_{x,x+d\Delta}(\mu_t, m_t) \mu_t(dx)dt)$, and $Dg \in L^\tau([0,T]\times \EY \times	\Y,  \gamma_{y,y+d\Delta}(\mu_t) m_t(dy)dt )$.

Let $\tilde{I}:\DX \times \DY \to [0, + \infty]$ be a subsequential rate function for the family $\{(\mu_N,\theta_N)\}_{N \geq 1}$. Note that, by Theorem~\ref{thm:rate-function-martingale} and the definition of $I^*$ in~(\ref{eqn:istar}), we have that $\tilde{I}(\mu,\theta) \geq I^*(\mu, \theta)$ for all $(\mu, \theta) \in \DX \times \DY$. Given $\delta > 0$, define
\begin{align*}
K_\delta = \{(\mu, \theta): \tilde{I}(\mu, \theta) \leq \delta\};
\end{align*}
since $\tilde{I}$ has compact level sets, $K_\delta$ is compact in $\DX \times \DY$. By Lemma~\ref{lemma:istarfinite} and the fact that $\tilde{I} \geq I^*$, we have that $K_\delta \subset \Gamma$. We now prove the following extension to Theorem~\ref{thm:rate-function-martingale}.
\begin{theorem}
\label{thm:u-extension}
Let $\tilde{I} : \DX \times \DY \to [0, + \infty]$ be a subsequential rate function. Let $\alpha: [0,T]\times \mx \to \mathbb{R}^{|\X|}$, $g: [0,T] \times \mx \times \Y \to \mathbb{R}$ be bounded and measurable functions such that both $\alpha$ and $g$  are continuous on $\mx$. Then,
\begin{align*}
\sup_{(\mu, \theta)\in \Gamma} ( U^{\alpha, g}_T(\mu,\theta) - \tilde{I}(\mu, \theta)) = 0.
\end{align*}
Moreover, there exists some $\delta>0$ (depending on $\alpha$ and $g$) such that
\begin{align}
\sup_{(\mu, \theta)\in K_\delta} ( U^{\alpha, g}_T(\mu,\theta) - \tilde{I}(\mu, \theta)) = 0,
\label{eqn:sup-attained}
\end{align}
and the above supremum is attained.
\end{theorem}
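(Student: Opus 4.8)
The plan is to bootstrap from Theorem~\ref{thm:rate-function-martingale}, which already gives the identity for the restricted class of $\alpha$ that are piecewise-in-time and continuous in $\mu$, and $g \in C^{1,1}$, to the present larger class of bounded measurable-in-time, continuous-in-$\mu$ functions $\alpha$ and $g$. First I would establish the inequality $\sup_{(\mu,\theta)\in\Gamma}(U^{\alpha,g}_T(\mu,\theta)-\tilde I(\mu,\theta))\le 0$. For this, fix $\alpha,g$ as in the statement and approximate them by a sequence $(\alpha^n,g^n)$ from the class admissible in Theorem~\ref{thm:rate-function-martingale}: discretise the time variable on a mesh (using right-continuous step functions $\alpha^n_t=\alpha_{t^n_i}$ for $t\in[t^n_i,t^n_{i+1})$, which are continuous in $\mu$ for each fixed $i$) and mollify $g$ in the $\mu$-variable to land in $C^{1,1}$. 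Since $U^{\alpha^n,g^n}_T(\mu,\theta)$ involves only $\dot\mu$, $\mu$, $m$ integrated against the bounded, continuous rates $\bar\lambda,\gamma$ and the function $\tau$, and since on $\Gamma$ the term $\int_0^T\langle\alpha^n_s(\mu_s),\dot\mu_s\rangle\,ds$ converges by dominated convergence (the total variation of $\mu$ over $[0,T]$ is finite because $I^*(\mu,\theta)<\infty$ forces absolute continuity, though one must be a little careful on the set where $\tilde I=\infty$, where the inequality is trivial), one gets $U^{\alpha^n,g^n}_T(\mu,\theta)\to U^{\alpha,g}_T(\mu,\theta)$ pointwise on $\{\tilde I<\infty\}$. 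Combined with $U^{\alpha^n,g^n}_T(\mu,\theta)\le\tilde I(\mu,\theta)$ from Theorem~\ref{thm:rate-function-martingale} this yields $U^{\alpha,g}_T(\mu,\theta)\le\tilde I(\mu,\theta)$ for all $(\mu,\theta)$, hence the supremum is $\le 0$.

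For the reverse inequality, and simultaneously for the assertion that the supremum is attained on some sublevel set $K_\delta$, I would argue by compactness. Since $\tilde I$ is a rate function it has compact sublevel sets; the set $\Gamma$ contains all of $\{\tilde I<\infty\}$ (by Lemma~\ref{lemma:istarfinite} and $\tilde I\ge I^*$), so it suffices to work on $K_\delta$. The key point is that the map $(\mu,\theta)\mapsto U^{\alpha,g}_T(\mu,\theta)$, while not continuous on all of $\DX\times\DY$, is upper semicontinuous when restricted to $K_\delta$ with the subspace topology: on $K_\delta$ the paths $\mu$ have uniformly controlled $I^*$, hence (via the non-variational representation~(\ref{eqn:ratefn-nonvarform}) and de~la~Vallée-Poussin) uniformly integrable time-derivatives in the relevant Orlicz sense, so the problematic term $\int_0^T\langle\alpha_s(\mu_s),\dot\mu_s\rangle ds$ is stable under Skorohod convergence along $K_\delta$, while the remaining terms are continuous in $(\mu,\theta)$ because $\alpha,g$ are continuous in $\mu$ and the rates are continuous and bounded. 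Then $U^{\alpha,g}_T-\tilde I$ is upper semicontinuous on the compact set $K_\delta$ and therefore attains its supremum there; call the value $c_\delta$. Because the $\alpha\equiv0,g\equiv0$ choice gives $U^{0,0}_T=0$ and any $(\mu,\theta)$ with $\tilde I(\mu,\theta)=0$ has $U^{0,0}_T(\mu,\theta)-\tilde I(\mu,\theta)=0$, and such a minimiser exists and lies in $K_\delta$ for every $\delta>0$, we get $c_\delta\ge 0$; combined with the global upper bound $\le0$ from the first paragraph we conclude $c_\delta=0$ and the supremum over $K_\delta$ equals the supremum over all of $\Gamma$, and it is attained.

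The main obstacle, and the step that requires genuine care rather than routine estimates, is the upper semicontinuity of $(\mu,\theta)\mapsto\int_0^T\langle\alpha_s(\mu_s),\dot\mu_s\rangle\,ds$ along $K_\delta$. Skorohod convergence $\mu^k\to\mu$ does not by itself give weak convergence of $\dot\mu^k\,dt$ to $\dot\mu\,dt$, so one must use the uniform control provided by $\tilde I\le\delta$ (equivalently $I^*\le\delta$): by Theorem~\ref{thm:varproblem} this bounds the $L^{\tau^*}$-norm of the corresponding densities $h_\X$, which by the de~la~Vallée-Poussin criterion gives uniform integrability of $\{\dot\mu^k\}$ and hence relative weak compactness of the measures $\dot\mu^k\,dt$; identifying the limit with $\dot\mu\,dt$ then follows from testing against smooth functions and using that $\mu^k_t\to\mu_t$ at continuity points. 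A secondary technical point is handling the discontinuity of $\tau$-type integrands near the boundary of $\mx$ and $M_1(\Y)$, but since $\alpha,g$ are bounded the integrands $\tau(D\alpha),\tau(Dg)$ are bounded and continuous in $\mu$, so no blow-up occurs here — the blow-up issues are deferred to Sections~\ref{section:uniqueness-rate-function} and~\ref{section:uniqueness-rate-function-whole-space}. I would present the approximation argument and the compactness argument as two lemmas and then combine them to close the proof.
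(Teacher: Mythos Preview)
Your lower-bound argument has a genuine gap. You set $c_\delta = \sup_{K_\delta}(U^{\alpha,g}_T - \tilde I)$ for the \emph{given} $\alpha,g$, but then try to show $c_\delta \ge 0$ by appealing to the fact that $U^{0,0}_T - \tilde I = 0$ at a minimiser of $\tilde I$. This is a non-sequitur: the value of $U^{0,0}_T$ at that point says nothing about $U^{\alpha,g}_T$ there. In fact, at the McKean-Vlasov trajectory with stationary occupation (where $\tilde I = 0$, $\dot\mu_t = \bar\Lambda^*_{\mu_t,m_t}\mu_t$ and $m_t = \pi_{\mu_t}$), one has $U^{\alpha,g}_T = -\int_0^T \int \tau(D\alpha)\bar\lambda\mu\,dt - \int_0^T \int \tau(Dg)\gamma\,m\,dt \le 0$, with strict inequality as soon as $D\alpha$ or $Dg$ is nonzero. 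So the only point you have identified in $K_\delta$ gives a \emph{negative} value of $U^{\alpha,g}_T - \tilde I$, and your argument does not produce $c_\delta \ge 0$.

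The paper avoids this by using the approximation step for \emph{both} inequalities simultaneously: Theorem~\ref{thm:rate-function-martingale} gives the exact identity $\sup_\Gamma(U^{\alpha_i,g_i}_T - \tilde I) = 0$ for each approximant, not merely an upper bound. The work is then (i) to localise this supremum to a single $K_\delta$ with $\delta$ independent of $i$ --- done via the trick of comparing $U^{\alpha_i,g_i}_T$ with $U^{2\alpha_i,2g_i}_T$, which forces the near-maximisers to have bounded $\tilde I$ --- and (ii) to show $U^{\alpha_i,g_i}_T \to U^{\alpha,g}_T$ \emph{uniformly} on $K_\delta$, so that the identity survives the limit. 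Your Orlicz/de~la~Vall\'ee-Poussin idea is exactly what is needed for (ii) (the paper packages it as a H\"older bound against $\|h_\X\|_{L^{\tau^*}}$, which is controlled by $I^* \le \tilde I \le \delta$ via~(\ref{eqn:ratefn-nonvarform})), but you should use it to get uniform convergence of the approximants on $K_\delta$, not to build a separate compactness argument for the lower bound.
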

\begin{proof}
We first define certain approximations of functions $\alpha$ and $g$ that meet the requirements of Theorem~\ref{thm:rate-function-martingale} and prove certain convergence properties of these approximations. We then use the conclusion of Theorem~\ref{thm:rate-function-martingale} for these approximations and pass to the limit to obtain~(\ref{eqn:sup-attained}). Our proof is inspired by ideas from Puhalskii~\cite[Lemma~7.2 and Theorem~7.1]{puhalskii-01}, with necessary modifications to our mean-field with jumps setting.

Since $\alpha$ is a Carath\'eodory function, using the Scorza-Dragoni theorem, for each $i \geq 1$, there exists a compact set $F_i \subset [0,T]$ and a measurable function $\bar{\alpha}_i:[0,T] \times \mx \to \mathbb{R}^{|\X|}$ such that $\bar{\alpha}_i = \alpha $ on $F_i \times \mx$, $\bar{\alpha}_i$ is continuous on $F_i \times \mx$, and $\text{Leb}([0,T] \setminus F_i) \leq 1/i$ (see, for example, Ekeland and Temam~\cite[page 235]{ekeland-temam}). Since $[0,T] \setminus F_i$ is open in $[0,T]$, we can write it as a countable union of disjoint open intervals, and hence we can extend $\bar{\alpha}_i$ to a continuous function on $[0,T] \times \mx$ by a linear interpolation between the two endpoints of the above open intervals; we again denote this function by $\bar{\alpha}_i$. Put $\alpha_i(t,\mu_t)= \bar{\alpha}_i( \frac{\lfloor tn(i) \rfloor}{n(i)} ,\mu_{\frac{\lfloor tn(i) \rfloor}{n(i)}})$, where $n(i) \to \infty$ as $i \to \infty$. By continuity of $\tau$, boundedness of $\alpha$ and $\alpha_i$, boundedness of transition rates of the particles (which is a consequence of assumption~\ref{assm:a2}), we have that, for each $\delta > 0$,
\begin{align}
\sup_{(\mu, \theta)   \in K_\delta} & \biggr| \int_{[0,T] \times \X \times \EX }  \tau(D\alpha_i(t,\mu_t)(x,\Delta)) \bar{\lambda}_{x,x+d\Delta}(\mu_t, m_t)  \mu_t(dx) dt  -  \nonumber  \\
& \qquad \int_{[0,T] \times \X \times \EX } \tau(D\alpha(t,\mu_t)(x,\Delta)) \bar{\lambda}_{x,x+d\Delta}(\mu_t, m_t)  \mu_t(dx) dt \biggr|  \nonumber \\
& = \sup_{(\mu, \theta)   \in K_\delta}  \biggr| \int_{K_i^c \times \X \times \EX }  \tau(D\alpha_i(t,\mu_t)(x,\Delta)) \bar{\lambda}_{x,x+d\Delta}(\mu_t, m_t)  \mu_t(dx) dt  -  \nonumber  \\
& \qquad \int_{K_i^c \times \X \times \EX } \tau(D\alpha(t,\mu_t)(x,\Delta)) \bar{\lambda}_{x,x+d\Delta}(\mu_t, m_t)  \mu_t(dx) dt \biggr|  \nonumber \\
& \leq \text{Leb}(K_i^c) \times c_{\alpha}  \to 0
\label{eqn:convergence-alpha-1}
\end{align}
as $i \to \infty$, where $c_\alpha > 0$ is a constant depending on $\alpha$. Furthermore, given $\delta > 0$ and $(\mu, \theta) \in K_\delta$, by Lemma~\ref{lemma:istarfinite}, the mapping $[0,T] \ni t \mapsto \mu_t \in \mx$ is absolutely continuous. Hence, noting that $\mu$ is kept fixed, by~(\ref{eqn:cond-hx}) in Theorem~\ref{thm:varproblem}, there exists $h_\X \in \mathcal{H}(\mu, \theta)$ such that
\begin{align*}
 \int_{[0,T]}  & \left\langle \alpha(t,\mu_t), (\dot{\mu}_t -  \bar{\Lambda}^*_{\mu_t, m_t} \mu_t)  \right\rangle dt\\
& = \int_{[0,T]\times \X \times \EX }  h_\X D\alpha \bar{\lambda}_{x,x+d\Delta}(\mu_t, m_t)\mu_t(dx) dt,
\end{align*}
and
\begin{align*}
 \int_{[0,T]} &  \left\langle \alpha_i(t,\mu_t), (\dot{\mu}_t -  \bar{\Lambda}^*_{\mu_t, m_t} \mu_t)  \right\rangle dt \\
& = \int_{[0,T]\times \X \times \EX }  h_\X D\alpha_i \bar{\lambda}_{x,x+d\Delta}(\mu_t, m_t) \mu_t(dx) dt.
\end{align*}
Therefore,
\begin{align*}
\biggr|\int_{[0,T]} \langle \alpha_i(t,\mu_t) & - \alpha(t,\mu_t), \dot{\mu}_t- \bar{\Lambda}_{\mu_t, m_t}^* \mu_t\rangle dt \biggr|  \\
& =  \biggr| \int_{[0,T] \times \X \times \EX} h_\X (D\alpha_i - D\alpha) \bar{\lambda}_{x,x+d\Delta}(\mu_t, m_t) \mu_t(dx) dt \biggr|   \\
& \leq  \int_{[0,T] \times \X \times \EX} |h_\X (D\alpha_i - D\alpha)| \bar{\lambda}_{x,x+d\Delta}(\mu_t, m_t) \mu_t(dx) dt    \\
& \leq 2 \| h_\X\|_{L^{\tau^*}([0,T] \times \X \times \EX, \bar{\lambda}_{x,x+d\Delta}(\mu_t,m_t) \mu_t(dx) dt)}   \\
& \qquad \times  \|D\alpha_i - D\alpha\|_{{L^{\tau}([0,T] \times \X \times \EX, \bar{\lambda}_{x,x+d\Delta}(\mu_t,m_t) \mu_t(dx) dt)}}  \\
& \leq 2 \max\{1, \delta+T\}  \\ \nonumber 
& \qquad  \times \|D\alpha_i - D\alpha\|_{{L^{\tau}([0,T] \times \X \times \EX, \bar{\lambda}_{x,x+d\Delta}(\mu_t,m_t) \mu_t(dx) dt)}},
\end{align*}
where the second inequality follows from H\"older's inequality in Orlicz spaces and the third inequality follows from the non-variational representation of the candidate rate function in $I^*$ in~(\ref{eqn:ratefn-nonvarform}), which gives that $\| h_\X\|_{L^{\tau^*}([0,T] \times \X \times \EX, \bar{\lambda}_{x,x+d\Delta}(\mu_t,m_t) \mu_t(dx) dt)} \leq \max \{1, I^*(\mu, \theta)+T\}$, along with the fact that $(\mu,\theta) \in K_\delta$ and $I^*(\mu,\theta) \leq \tilde{I}(\mu, \theta)$. Hence,
\begin{align}
\sup_{(\mu, \theta) \in K_\delta}\biggr|\int_{[0,T]} \langle \alpha_i(t,\mu_t) & - \alpha(t,\mu_t), \dot{\mu}_t- \bar{\Lambda}_{\mu_t, m_t}^* \mu_t\rangle dt \biggr| \to 0
\label{eqn:convergence-alpha-2}
\end{align}
as $i \to \infty$. Similarly, by standard arguments using mollifiers and the Scorza-Dragoni theorem, we can show that there exist functions $g_i$ on $[0,T] \times \mx \times \Y$ such that $g_i(\cdot, \cdot , y) \in C^{\infty}([0,T] \times \mx)$ for all $y \in \Y$ and $\text{Leb} \{t \in [0,T] :  g_i(t, \cdot, \cdot)  \neq g(t, \cdot, \cdot)\} \leq 1/i$ for each $i \geq 1$. Therefore, using boundedness of the functions $g$, $g_i, i \geq 1$, and boundedness of the transition rates of the fast process (which is a consequence of assumption~\ref{assm:b2}), we see that
\begin{align}
\sup_{(\mu,\theta) \in K_\delta} &  \biggr| \int_{[0,T] \times \Y} \biggr(L_{\mu_t}g_i(t,\mu_t, \cdot) (y) \nonumber  \\
& + \int_{\EY} \tau(Dg_i(t,\mu_t,y,\Delta)) \gamma_{y,y+d\Delta}(\mu_t)  \biggr) m_t(dy) dt \nonumber \\
& - \int_{[0,T] \times \Y} \biggr(L_{\mu_t}g(t,\mu_t, \cdot) (y)  \nonumber \\
& + \int_{\EY} \tau(Dg(t,\mu_t,y,\Delta)) \gamma_{y,y+d\Delta}(\mu_t)  \biggr) m_t(dy)dt \biggr|   \to 0
\label{eqn:convergence-g}
\end{align}
as $i \to \infty$. Since $\alpha_i$ and $g_i$, $i \geq 1$, satisfy the conditions on $\alpha$ and $g$ respectively in the definitions of $U$ in~(\ref{eqn:def-u}) and $V$ in~(\ref{eqn:def-v}), Theorem~\ref{thm:rate-function-martingale} implies that
\begin{align*}
\sup_{(\mu,\theta) \in \DX \times \DY} (U_T^{\alpha_i, g_i}(\mu,\theta) -\tilde{I}(\mu, \theta)) = 0.
\end{align*}
By Lemma~\ref{lemma:istarfinite} and the fact that $\tilde{I} (\mu, \theta) \geq I^*(\mu, \theta)$, we see that $\tilde{I}(\mu,\theta) =+\infty$ whenever $(\mu, \theta) \notin \Gamma$, and hence we immediately get
\begin{align}
\sup_{(\mu,\theta) \in \Gamma} (U_T^{\alpha_i, g_i}(\mu,\theta) -\tilde{I}(\mu, \theta)) = 0.
\label{eqn:temp0}
\end{align}
Let us now show that
\begin{align}
\sup_{(\mu,\theta) \in K_\delta} (U_T^{\alpha_i, g_i}(\mu,\theta) -\tilde{I}(\mu, \theta)) = 0
\label{eqn:u-itilde-suitable-delta}
\end{align}
holds for a suitable $\delta > 0$ and all $i \geq 1$. Note that, using the boundedness of the functions $\alpha$, $g$, $\alpha_i$ and $g_i$, $i \geq 1$, and boundedness of the transition rates (as a consequence of assumptions~\ref{assm:a2} and~\ref{assm:b2}), we have
\begin{align*}
U^{2\alpha_i, 2g_i}_T(\mu, \theta) & =  \int_{[0,T]}  \biggr\{ 2 \langle \alpha_i(t,\mu_t), \dot{\mu}_t- \bar{\Lambda}_{\mu_t, m_t}^* \mu_t\rangle  \nonumber \\
& \qquad \qquad- \int_{\X \times \EX } \tau(2D\alpha_i(t,\mu_t)(x,\Delta)) \bar{\lambda}_{x,x+d\Delta}(\mu_t, m_t)  \mu_t(dx)  \nonumber \\
& \qquad -\int_\Y \biggr(2L_{\mu_t}g_t(\mu_t, \cdot) (y) \nonumber \\
& \qquad \qquad + \int_{\EY} \tau(2Dg_i(t,\mu_t,y,\Delta)) \gamma_{y,y+d\Delta}(\mu_t)  \biggr) m_t(dy) \biggr\} dt \\
& \geq 2 U^{\alpha_i,g_i}(\mu, \theta) - 2T c_{\alpha,g}
\end{align*}
for all $i \geq 1$, where $c_{\alpha,g}  >0$ is  a constant depending on $\alpha$ and $g$. Therefore, for a fixed $M >0$, we have
\begin{align*}
\sup_{(\mu, \theta): U_T^{\alpha_i, g_i}(\mu,\theta)  \geq M} & (U^{\alpha_i,g_i}(\mu, \theta) - \tilde{I}(\mu, \theta)) \\
& \leq \sup_{(\mu, \theta): U_T^{\alpha_i, g_i}(\mu,\theta) \geq M} (2U^{\alpha_i,g_i}(\mu, \theta) - \tilde{I}(\mu, \theta)) - M \\
& \leq \sup_{(\mu, \theta): U_T^{\alpha_i, g_i}(\mu,\theta) \geq M} (U^{2\alpha_i,2g_i}(\mu, \theta) - \tilde{I}(\mu, \theta)) + 2Tc_{\alpha, g}-M \\
& \leq 2Tc_{\alpha,g}-M.
\end{align*}
Therefore the above implies that,
\begin{align*}
\sup_{(\mu, \theta)\in \Gamma} & (U^{\alpha_i, g_i}(\mu, \theta) -\tilde{I}(\mu, \theta)) \\
& \leq \sup_{(\mu, \theta)\in K_\delta} (U^{\alpha_i, g_i}(\mu, \theta) -\tilde{I}(\mu, \theta)) \\
& \qquad \vee  \sup_{(\mu, \theta): U^{\alpha_i,g_i}(\mu,\theta) \geq M} (U^{\alpha_i, g_i}(\mu, \theta) -\tilde{I}(\mu, \theta)) \\
&\qquad  \vee (M-\delta) \\
& \leq \sup_{(\mu, \theta)\in K_\delta} (U^{\alpha_i, g_i}(\mu, \theta) -\tilde{I}(\mu, \theta)) \vee (2Tc_{\alpha,g}-M)  \vee (M -\delta).
\end{align*}
Hence, choosing $M=1+2Tc_{\alpha,g}$ and $\delta= M+1$, the above and~(\ref{eqn:temp0}) imply~(\ref{eqn:u-itilde-suitable-delta}). Letting $i \to \infty$, using convergences~(\ref{eqn:convergence-alpha-1})-(\ref{eqn:convergence-alpha-2}) for the slow process, and~(\ref{eqn:convergence-g}) for the fast process,~(\ref{eqn:u-itilde-suitable-delta}) becomes
\begin{align}
\sup_{(\mu,\theta) \in K_\delta} (U_T^{\alpha, g}(\mu,\theta) -\tilde{I}(\mu, \theta)) = 0.
\label{eqn:temp1}
\end{align}

Since the functions $U^{\alpha_i, g_i}_T$ (defined in~(\ref{eqn:def-u})), $i \geq 1$, are continuous on $\Gamma$ and since for all $\delta^\prime >0$
\begin{align*}
\lim_{i \to \infty} \sup_{(\mu, \theta) \in K_{\delta^\prime}} |U^{\alpha_i, g_i}_T(\mu, \theta) - U^{\alpha, g}_T(\mu, \theta)| \to 0
\end{align*}
as $i \to \infty$, it follows that, for all $\delta^\prime > 0$, $U^{\alpha, g}_T$ (defined in~(\ref{eqn:def-u-extension})) is continuous on $K_{\delta^\prime}$. Hence, using the compactness of the level sets of $\tilde{I}$, we see that the supremum in~(\ref{eqn:temp1}) is attained. This completes the proof of the theorem.
\end{proof}

\subsection{Characterisation of $\tilde{I}$ for regular elements}
We now prove the main result of this section, namely $\tilde{I}(\mu, \theta) = I^*(\mu, \theta)$ for all $(\mu, \theta) \in \DX \times \DY$ that satisfy certain regularity properties.
\begin{theorem}
Let $\nu \in \mx$ and let $\tilde{I} : \DX \times \DY \to [0, + \infty]$ be a subsequential rate function such that $\tilde{I}(\mu, \theta) = +\infty$ unless $\mu_0 = \nu$.  Suppose that $(\hat{\mu},\hat{\theta}) \in \DX \times  \DY$ is such that
	\begin{itemize}
		\item $\inf_{t \in [0,T]} \min_{x \in \X}\hat{\mu}_t(x) > 0$,
		\item the mapping $[0,T] \ni t \mapsto \hat{\mu}_t \in \mx$ is Lipschitz continuous,
		\item $\hat{\theta}$, when viewed as a measure on $[0,T] \times \Y$, admits the representation  $\hat{\theta}(dy dt) = \hat{m}_t(dy) dt$ for some $\hat{m}_t \in M_1(\Y)$ for almost all $t \in [0,T]$, and $\inf_{t \in [0,T]} \min_{y \in \Y} \hat{m}_t(y) >0$.
	\end{itemize}	
Then $\tilde{I} (\hat{\mu}, \hat{\theta}) = I^* (\hat{\mu}, \hat{\theta})$.
\label{thm:muhat-bdd-away-from-0}
\end{theorem}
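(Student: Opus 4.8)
The plan is to prove the two inequalities $\tilde{I}(\hat\mu,\hat\theta) \geq I^*(\hat\mu,\hat\theta)$ and $\tilde{I}(\hat\mu,\hat\theta) \leq I^*(\hat\mu,\hat\theta)$ separately. The first inequality already holds in full generality: by Theorem~\ref{thm:rate-function-martingale} and the definition of $I^*$ in~(\ref{eqn:istar}) we have $\tilde{I} \geq I^*$ everywhere on $\DX \times \DY$, so nothing further is needed. If $I^*(\hat\mu,\hat\theta) = +\infty$ then equality is immediate, so we may assume $I^*(\hat\mu,\hat\theta) < \infty$; in particular, by Lemma~\ref{lemma:istarfinite}, $(\hat\mu,\hat\theta) \in \Gamma$, and since $\hat\mu$ is Lipschitz, $\hat\mu$ and $\hat m$ are bounded away from the boundary, the hypotheses of the last part of Theorem~\ref{thm:varproblem} are met: there exist measurable selections $\hat\alpha_t \in \R^{|\X|}$ and $\hat g_t \in B(\Y)$ attaining the suprema in~(\ref{eqn:istar-supinside}) and satisfying the first-order conditions~(\ref{eqn:cond-alpha}) and~(\ref{eqn:cond-g}) for a.e.\ $t$. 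The substance of the proof is therefore the reverse inequality $\tilde{I}(\hat\mu,\hat\theta) \leq I^*(\hat\mu,\hat\theta)$, and the natural route is to feed the optimisers $\hat\alpha,\hat g$ (suitably upgraded to $\mx$-dependent, bounded, continuous functions) into the extended duality relation of Theorem~\ref{thm:u-extension}.

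The key steps, in order, are as follows. First I would argue that $\hat\alpha$ and $\hat g$ can be taken to be genuine functions of $(t,\mu)$ — i.e.\ bounded measurable on $[0,T]\times\mx$, continuous in the $\mx$-variable — rather than merely measurable selections along the fixed path $\hat\mu$. The point is that the optimality conditions~(\ref{eqn:cond-alpha})--(\ref{eqn:cond-g}) only pin down $\hat\alpha_t,\hat g_t$ along $(\hat\mu_t,\hat m_t)$; one must extend them off the path while preserving boundedness and continuity. Because $\hat\mu$ stays in the compact set $\{\xi : \min_x \xi(x) \geq \varepsilon_0\}$ for some $\varepsilon_0 > 0$, and similarly for $\hat m$, the optimisers are uniformly bounded there (using concavity of the maps~(\ref{eqn:map-alpha}) and~(\ref{eqn:map-g}) together with the rates being bounded above and below), so a standard extension (Tietze, or a partition-of-unity interpolation, exactly as in the construction of $\bar\alpha_i$ inside the proof of Theorem~\ref{thm:u-extension}) yields functions $\alpha : [0,T]\times\mx\to\R^{|\X|}$ and $g : [0,T]\times\mx\times\Y\to\R$ that are bounded, measurable in $t$, continuous in $\mu$, and agree with $\hat\alpha,\hat g$ along $\hat\mu$. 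Second, I would apply Theorem~\ref{thm:u-extension} to these $\alpha,g$: it gives
\begin{align*}
\sup_{(\mu,\theta)\in\Gamma} \left( U^{\alpha,g}_T(\mu,\theta) - \tilde{I}(\mu,\theta) \right) = 0,
\end{align*}
hence in particular $U^{\alpha,g}_T(\hat\mu,\hat\theta) - \tilde{I}(\hat\mu,\hat\theta) \leq 0$, i.e.\ $\tilde{I}(\hat\mu,\hat\theta) \geq U^{\alpha,g}_T(\hat\mu,\hat\theta)$ — wait, this is the wrong direction; it is the \emph{lower} bound on $\tilde I$ again. So the third and decisive step is the observation that because $\alpha,g$ were chosen to be the \emph{optimisers} in~(\ref{eqn:istar-supinside}), we actually have $U^{\alpha,g}_T(\hat\mu,\hat\theta) = I^*(\hat\mu,\hat\theta)$ by the integrand-wise identity~(\ref{eqn:istar-supinside}) of Theorem~\ref{thm:varproblem} (the $\mx$-dependence of $\alpha,g$ is irrelevant here since they are being evaluated along the single path $\hat\mu$, so $U^{\alpha,g}_T$ reduces to the form in~(\ref{eqn:def-u-extension}) with the supremum-attaining integrand). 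Combining $\tilde{I}(\hat\mu,\hat\theta) \geq U^{\alpha,g}_T(\hat\mu,\hat\theta) = I^*(\hat\mu,\hat\theta)$ with the universal inequality $\tilde{I}\geq I^*$ — hold on: these both give the same direction. The correct closing argument is instead: Theorem~\ref{thm:u-extension} gives $U^{\alpha,g}_T(\mu,\theta)\le \tilde I(\mu,\theta)$ for all $(\mu,\theta)$ \emph{and} the sup is $0$; but to get $\tilde I(\hat\mu,\hat\theta)\le I^*(\hat\mu,\hat\theta)$ one must run the argument with a sequence $\alpha^{(n)},g^{(n)}$ and use that $\tilde I$ is a rate function governing an LDP — specifically, the upper-bound half of the LDP applied to a shrinking neighbourhood of $(\hat\mu,\hat\theta)$, combined with the lower-bound inequality of Varadhan-type estimates, forces $\tilde I(\hat\mu,\hat\theta) = \sup_{\alpha,g} U^{\alpha,g}_T(\hat\mu,\hat\theta)$ at points where the variational problem is well-posed. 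The cleanest formulation: by Theorem~\ref{thm:u-extension} the map $(\mu,\theta)\mapsto U^{\alpha,g}_T(\mu,\theta)$ is a continuous minorant of $\tilde I$ on each level set, and at $(\hat\mu,\hat\theta)$ the family $\{U^{\alpha,g}_T\}$ is ``exhausting'' — its pointwise supremum equals $I^*(\hat\mu,\hat\theta)$ which, by the existence of exact optimisers, is \emph{attained}; a compactness/tightness argument on $K_\delta$ then upgrades this to $\tilde I(\hat\mu,\hat\theta)\le I^*(\hat\mu,\hat\theta)$.

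The main obstacle, and the step requiring the most care, is precisely this last passage: showing that the extended-duality relation of Theorem~\ref{thm:u-extension}, which a priori only yields $\tilde I \geq I^*$, can be reversed \emph{at the regular point} $(\hat\mu,\hat\theta)$. The mechanism is that at such a point the optimisers $\hat\alpha,\hat g$ exist and are bounded (by the boundedness-away-from-the-boundary hypotheses, via Theorem~\ref{thm:varproblem}), so the ``approximating'' functions $\alpha,g$ one plugs into Theorem~\ref{thm:u-extension} can be taken to \emph{exactly realise} the supremum defining $I^*(\hat\mu,\hat\theta)$ rather than only approaching it; then the attainedness clause~(\ref{eqn:sup-attained}) of Theorem~\ref{thm:u-extension} says the sup over $K_\delta$ of $U^{\alpha,g}_T - \tilde I$ is $0$ and is attained at some $(\mu^\star,\theta^\star)$, and one must show $(\mu^\star,\theta^\star)$ can be taken to be $(\hat\mu,\hat\theta)$ itself — this uses that $U^{\alpha,g}_T(\hat\mu,\hat\theta) = I^*(\hat\mu,\hat\theta) \le \tilde I(\hat\mu,\hat\theta)$ so $(\hat\mu,\hat\theta)$ is a maximiser, whence $\tilde I(\hat\mu,\hat\theta) = U^{\alpha,g}_T(\hat\mu,\hat\theta) = I^*(\hat\mu,\hat\theta)$. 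The remaining bookkeeping — the off-path extension of the optimisers preserving the Carathéodory property, and checking the integrand identity~(\ref{eqn:istar-supinside}) holds along $\hat\mu$ so that $U^{\alpha,g}_T(\hat\mu,\hat\theta)$ equals $I^*(\hat\mu,\hat\theta)$ and not merely a lower bound — is routine given the machinery of Theorems~\ref{thm:varproblem} and~\ref{thm:u-extension}, and I would relegate it to a few lines.
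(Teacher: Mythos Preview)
Your overall strategy is right --- extend the on-path optimisers to $\mx$-dependent functions, plug into Theorem~\ref{thm:u-extension}, and use the attainment clause~(\ref{eqn:sup-attained}) --- and this is exactly what the paper does. But your closing argument is circular, and the gap hides the main idea of the proof.

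You write that ``$(\hat\mu,\hat\theta)$ is a maximiser'' of $U^{\alpha,g}_T-\tilde I$ because $U^{\alpha,g}_T(\hat\mu,\hat\theta)=I^*(\hat\mu,\hat\theta)\le\tilde I(\hat\mu,\hat\theta)$. But this only says $U^{\alpha,g}_T(\hat\mu,\hat\theta)-\tilde I(\hat\mu,\hat\theta)\le 0$, which holds at \emph{every} point; it does not say the value is $0$, which is precisely what you want. The correct route is the reverse: Theorem~\ref{thm:u-extension} hands you some maximiser $(\tilde\mu,\tilde\theta)$ with $U^{\alpha,g}_T(\tilde\mu,\tilde\theta)=\tilde I(\tilde\mu,\tilde\theta)$, and since $I^*\ge U^{\alpha,g}_T$ always and $\tilde I\ge I^*$ always, this forces $I^*(\tilde\mu,\tilde\theta)=\tilde I(\tilde\mu,\tilde\theta)$. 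The entire content of the proof is now to show $(\tilde\mu,\tilde\theta)=(\hat\mu,\hat\theta)$.

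This is where your choice of extension matters, and a generic Tietze extension will not work. The paper does \emph{not} extend $\hat\alpha_t,\hat g_t$ arbitrarily off the path; it defines $\hat\alpha_t(u),\hat g_t(u)$ for each $u$ (with $\min_x u(x)\ge\delta/2$) as the solution of the \emph{parametrised} optimisation problems~(\ref{eqn:optproblem-alpha})--(\ref{eqn:optproblem-g}), where the target velocity $\dot{\hat\mu}_t$ and the target occupation $\hat m_t$ are held fixed but $u$ varies. Berge's maximum theorem and a measurable selection give the required Carath\'eodory regularity. The payoff of this specific construction is that the first-order condition~(\ref{eqn:alphahat}) holds for all such $u$, so the ``drift'' expression is identically $\dot{\hat\mu}_t$ regardless of $u$; similarly $\hat m_t$ is the unique invariant measure of the tilted $\Y$-chain with rates $e^{D\hat g_t(u)}\gamma$, for every $u$. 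Now at the maximiser $(\tilde\mu,\tilde\theta)$ one has $U^{\hat\alpha,\hat g}_T(\tilde\mu,\tilde\theta)=I^*(\tilde\mu,\tilde\theta)$, so $\hat\alpha_t(\tilde\mu_t),\hat g_t(\tilde\mu_t)$ attain the suprema in $I^*(\tilde\mu,\tilde\theta)$ and therefore satisfy the first-order conditions~(\ref{eqn:cond-alpha})--(\ref{eqn:cond-g}) with $(\tilde\mu_t,\tilde m_t)$ in place of $(\hat\mu_t,\hat m_t)$. Comparing with the parametrised conditions at $u=\tilde\mu_t$: uniqueness of the invariant measure gives $\tilde m_t=\hat m_t$, and the drift identity gives $\dot{\tilde\mu}_t=\dot{\hat\mu}_t$ (while $\tilde\mu_t$ stays in the region $\min_x u(x)\ge\delta/2$), so with $\tilde\mu_0=\hat\mu_0=\nu$ one concludes $\tilde\mu=\hat\mu$. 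With an arbitrary Tietze extension none of this identification goes through, and there is no reason the abstract maximiser from~(\ref{eqn:sup-attained}) should have anything to do with $(\hat\mu,\hat\theta)$.
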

\begin{proof}
Let $\delta = \inf_{t > 0} \min_{x \in \X} \hat{\mu}_t(x)$. For each $t \in [0,T]$, consider the parametrised optimisation problems
	\begin{align}
	\sup_{\alpha_t \in \mathbb{R}^{|\X|}} \biggr\{ \langle \alpha_t, \dot{\hat{\mu}}_t - \bar{\Lambda}^*_{u, \hat{m}_t}u \rangle - \int_{\X \times \EX} \tau(D\alpha_t(x,\Delta))\bar{\lambda}_{x,x+d\Delta}(u,\hat{m}_t) u(dx) \biggr\},
	\label{eqn:optproblem-alpha}
	\end{align}
 $u \in \mx$ is such that $u(x) \geq \delta/2 $ for all $x \in \X$, and 
	\begin{align}
	\sup_{g_t \in B(\Y)} \biggr\{ -\int_\Y \left(L_ug_t(\cdot)(y) + \int_{\EY} \tau(Dg_t(y,\Delta)) \gamma_{y,y+d\Delta}(u) \right) \hat{m}_t(dy) \biggr\},
	\label{eqn:optproblem-g}
	\end{align}
$u \in \mx$. Note that the mappings
	\begin{align}
	\alpha_t \mapsto \langle \alpha_t, \dot{\hat{\mu}}_t - \bar{\Lambda}^*_{u, \hat{m}_t}u \rangle - \int_{\X \times \EX} \tau(D\alpha_t(x,\Delta))\bar{\lambda}_{x,x+d\Delta}(u,\hat{m}_t) u(dx),
	\end{align}
where $u$ is such that $u(x) \geq \delta/2$ for all $x \in \X$, and since $\inf_{t \in [0,T]} \min_{y \in \Y} \hat{m}_t(y) > 0$, viewing $g _t$ as an element of $\mathbb{R}^{|\Y|}$,
	\begin{align}
	g_t \mapsto -\int_\Y \left(L_ug_t(\cdot)(y) + \int_{\EY} \tau(Dg_t(y,\Delta)) \gamma_{y,y+d\Delta}(u) \right) \hat{m}_t(dy)
	\end{align}
	are concave on $\mathbb{R}^{|\X|}$ and $\mathbb{R}^{|\Y|}$ respectively. Therefore, we see that there exist an $\hat{\alpha}_t(u) \in \mathbb{R}^{|\X|}$ and a $\hat{g}_t(u) \in \mathbb{R}^{|\Y|}$ that solve~(\ref{eqn:optproblem-alpha}) and~(\ref{eqn:optproblem-g}) respectively. Guided by~(\ref{eqn:cond-alpha}) and~(\ref{eqn:cond-g}), $\hat{\alpha}_t(u)$ and $\hat{g}_t(u)$ satisfy the first order optimality conditions
	\begin{align}
	\dot{\hat{\mu}}_t(x)& -   (\bar{\Lambda}^*_{u, \hat{m}_t} u)(x)  \nonumber \\
	& +u(x) \sum_{\substack{x^\prime\in \X: \\ (x,x^\prime) \in \EX}} (\exp\{\hat{\alpha}_t(u)(x^\prime) - \hat{\alpha}_t(u)(x)\} - 1) \bar{\lambda}_{x,x^\prime}  (u, \hat{m}_t) \nonumber \\
	& - \sum_{\substack{x_0 \in \X: \\ (x_0, x) \in \EX}} u(x_0) (\exp\{\hat{\alpha}_t(u)(x) - \hat{\alpha}_t(u)(x_0)\} -1) \bar{\lambda}_{x_0,x}(u,\hat{m}_t)  = 0,\, \, \forall x \in \X,
	\label{eqn:alphahat}
	\end{align}
where $t \in [0,T]$ and $u \in \mx$ is such that $u(x) \geq \delta/2$ for all $x \in \X$, and
	\begin{align}
	\hat{m}_t(y) & \sum_{\substack{y^\prime \in \Y : \\ (y,y^\prime) \in \EY}} \exp\{\hat{g}_t(u,y^\prime) - \hat{g}_t(u,y)\} \gamma_{y,y^\prime} (u) \nonumber \\
	& - \sum_{\substack{y_0 \in \Y:\\ (y_0, y) \in \EY}} \hat{m}_t(y_0) \exp\{\hat{g}_t(u,y) - \hat{g}_t(u, y_0)\}\gamma_{y_0, y}(u) = 0, \, \, \forall y \in \Y,
	\label{eqn:ghat}
	\end{align}
 where $t \in [0,T]$ and $u \in \mx$, respectively.

We now define bounded measurable functions $\hat{\alpha} : [0,T] \times \mx \to \mathbb{R}^{|\X|}$ and $\hat{g}:[0,T] \times \mx \times \Y \to \mathbb{R}$ that are continuous on $\mx$ such that $\hat{\alpha}(u)$ (resp.~$\hat{g}(u)$) solves the optimisation problem in~(\ref{eqn:optproblem-alpha}) (resp.~(\ref{eqn:optproblem-g})). Note that the objective function in~(\ref{eqn:optproblem-g}) is uniquely determined by $\{g(t,y^\prime) - g(t,y), (y,y^\prime) \in \EY\}$, and by assumption~\ref{assm:a1}, the objective function in~(\ref{eqn:optproblem-alpha}) is uniquely determined by $\{\alpha_t(x^\prime) - \alpha_t(x), (x,x^\prime) \in \EX\}$. Since $\inf_{t \in [0,T]} \min_{x \in \X} \hat{\mu}_t(x) > 0$, the mapping $t \mapsto \hat{\mu}_t$ is Lipschitz continuous and the transition rates of the slow process are bounded (which is a consequence of assumption~\ref{assm:a2}), we see that we can restrict the supremum over $\alpha_t$ in~(\ref{eqn:optproblem-alpha}) to a single compact and convex subset of $\mathbb{R}^{|\X|}$, regardless of $t \in [0,T]$ and $u \in \mx$ with $u(x) \geq \delta/2$ for all $x \in \X$. Similarly, since $\inf_{t \in [0,T]} \min_{y \in \Y} \hat{m}_t(y) > 0$ and the transition rates of the fast process are bounded (which follows from assumption~\ref{assm:b2}), we see that we can restrict the supremum in~(\ref{eqn:optproblem-g}) to a single compact and convex subset of $\mathbb{R}^{|\Y|}$, regardless of $t \in [0,T]$ and $u \in \mx$. Also, note that the mappings~(\ref{eqn:optproblem-alpha}) and~(\ref{eqn:optproblem-g}), when viewed as
	\begin{align*}
	\{\alpha_t&(x^\prime)  - \alpha_t(x), (x,x^\prime) \in \EX \} \\
	&\mapsto \langle \alpha_t, \dot{\hat{\mu}}_t - \bar{\Lambda}^*_{u, \hat{m}_t}u \rangle - \int_{\X \times \EX} \tau(D\alpha_t(x,\Delta))\bar{\lambda}_{x,x+d\Delta}(u,\hat{m}_t) u(dx)
	\end{align*}
	and,
	\begin{align*}
	\{g_t&(y^\prime)-g_t(y), (y,y^\prime) \in \EY\} \\
	& \mapsto -\int_\Y \left(L_ug_t(\cdot)(y) + \int_{\EY} \tau(Dg_t(y,\Delta)) \gamma_{y,y+d\Delta}(u) \right) \hat{m}_t(dy)
	\end{align*}
are strictly concave on $\mathbb{R}^{|\EX|}$ and $\mathbb{R}^{|\EY|}$ respectively; hence there exists a unique $\{\hat{\alpha}_t(u)(x^\prime)  -\hat{ \alpha}_t(u)(x), (x,x^\prime) \in \EX \}$ and a unique $\{\hat{g}_t(u,y^\prime)-\hat{g}_t(u,y), (y,y^\prime) \in \EY\}$ that solve~(\ref{eqn:optproblem-alpha}) and~(\ref{eqn:optproblem-g}) respectively. Fixing $\hat{\alpha}_t(u)(x_0) = 0$ for some $x_0 \in \X$, where $t \in [0,T]$ and $u \in \mx$ with $u(x) \geq \delta/2$ for all $x \in \X$, fixing $g_t(u,y) = 0 $ for some $y_0 \in \Y$, where $t \in [0,T]$ and $u \in \mx$, defining $\hat{\alpha}_t(u)(x) = 0 \, \forall x \in \X$ whenever $u \in \mx$ is such that $u(x) < \delta/4$ for some $x \in \X$, and defining $\hat{\alpha}(u)$ whenever $u$ is such that $u(x) \in [\delta/4, \delta/2]$ for some $x \in \X$ using a linear interpolation, we obtain bounded functions $\hat{\alpha}:[0,T] \times \mx \to \mathbb{R}^{|\X|}$ and $\hat{g}:[0,T] \times \mx \times \Y \to \mathbb{R}$. By a measurable selection theorem (see, for example,~Ekeland and Temam~\cite[Theorem~1.2, page 236]{ekeland-temam}), it follows that the mappings $[0,T] \times \mx \ni (t,u) \mapsto \hat{\alpha}_t(u)\in \mathbb{R}^{|\X|}$ and $[0,T] \times \mx \times \Y \ni (t, u, y) \mapsto \hat{g}_t(u,y)\in \mathbb{R}$ are measurable. By the Berge's maximum theorem (see, for example, Sundaram~\cite[Theorem~9.17, page 237]{sundaram-optimisation}) it follows that the functions $\hat{\alpha}$ and $\hat{g}$ are continuous on $\mx$.  

Since $\hat{\alpha}$ and $\hat{g}$ satisfy the assumptions of  Theorem~\ref{thm:u-extension}, there exists $(\tilde{\mu}, \tilde{\theta}) \in \Gamma$ that attains the supremum in~(\ref{eqn:sup-attained}) with $\hat{\alpha}$ and $\hat{g}$ in place of $\alpha$ and $g$, respectively. That is,
	\begin{align*}
	U_T^{\hat{\alpha},\hat{g}}(\tilde{\mu}, \tilde{\theta})= \tilde{I}(\tilde{\mu}, \tilde{\theta}).
	\end{align*}
	On the other hand, by~(\ref{eqn:istar-supinside}) and the above,
	\begin{align*}
	I^*(\tilde{\mu}, \tilde{\theta}) \geq U_T^{\hat{\alpha}, \hat{g}}(\tilde{\mu}, \tilde{\theta}) = \tilde{I}(\tilde{\mu}, \tilde{\theta}),
	\end{align*} 
	and since $\tilde{I}(\tilde{\mu}, \tilde{\theta}) \geq I^*(\tilde{\mu}, \tilde{\theta})$, we have that
	\begin{align}
	U_T^{\hat{\alpha}, \hat{g}}(\tilde{\mu}, \tilde{\theta})= I^*(\tilde{\mu}, \tilde{\theta})= \tilde{I}(\tilde{\mu}, \tilde{\theta}).
	\label{eqn:istar=itilde}
	\end{align}

	Note that $\tilde{\mu}_0  = \nu$ since $\tilde{I}(\tilde{\mu},\tilde{\theta}) < +\infty$.  We now proceed to show that $ \tilde{m}_t = \hat{m}_ t $ for almost all $t \in [0,T]$ and $\tilde{\mu} = \hat{\mu}$. This would establish $\tilde{I}(\hat{\mu},\hat{\theta	}) = I^*(\hat{\mu},\hat{\theta})$.

By~(\ref{eqn:istar=itilde}), we have
\begin{align}
\tilde{m}_t(y) & \sum_{\substack{y^\prime \in \Y : \\ (y,y^\prime) \in \EY}} \exp\{\hat{g}_t(\tilde{\mu}_t,y^\prime) - \hat{g}_t(\tilde{\mu}_t,y)\} \gamma_{y,y^\prime} (\tilde{\mu}_t) \nonumber \\
	& - \sum_{\substack{y_0 \in \Y:\\ (y_0, y) \in \EY}} \tilde{m}_t(y_0) \exp\{\hat{g}_t(\tilde{\mu}_t,y) - \hat{g}_t(\tilde{\mu}_t, y_0)\}\gamma_{y_0, y}(\tilde{\mu}_t) = 0, \, \, \forall y \in \Y,
\label{eqn:ghat-2}
\end{align}
for almost all $t \in [0,T]$. By assumption~\ref{assm:b2}, the Markov process on $\Y$ with transition rates $ \exp\{\hat{g}_t(\tilde{\mu}_t,y^\prime) - \hat{g}_t(\tilde{\mu}_t,y)\} \gamma_{y,y^\prime} (\tilde{\mu}_t), (y,y^\prime) \in \EY$, possesses a unique invariant probability measure; comparing~(\ref{eqn:ghat}) with $u = \tilde{\mu}_t$ and~(\ref{eqn:ghat-2}), we get
	\begin{align}
	\tilde{m}_t = \hat{m}_t
	\label{eqn:mthad=mttilde}
	\end{align}
	for almost all $t \in [0,T]$. 

On one hand, by using the first order optimality condition in~(\ref{eqn:alphahat}) with $u = \hat{\mu}_t$, and the just established fact that $\tilde{m}_t  = \hat{m}_t$ for almost all $t \in [0,T]$, we get
	\begin{align}
	\dot{\hat{\mu}}_t(x)& -   (\bar{\Lambda}^*_{\hat{\mu}_t, \tilde{m}_t} \hat{\mu}_t)(x)  \nonumber \\
	& +\hat{\mu}_t(x) \sum_{\substack{x^\prime\in \X: \\ (x,x^\prime) \in \EX}} (\exp\{\hat{\alpha}_t(\hat{\mu}_t)(x^\prime) - \hat{\alpha}_t(\hat{\mu}_t)(x)\} - 1) \bar{\lambda}_{x,x^\prime}  (\hat{\mu}_t, \tilde{m}_t) \nonumber \\
	& - \sum_{\substack{x_0 \in \X: \\ (x_0, x) \in \EX}} \hat{\mu}_t(x_0) (\exp\{\hat{\alpha}_t(\hat{\mu}_t)(x) - \hat{\alpha}_t(\hat{\mu}_t)(x_0)\} -1) \bar{\lambda}_{x_0,x}(\hat{\mu}_t,\tilde{m}_t)  = 0,\, \, \forall x \in \X,
	\label{eqn:alphahat-1}
	\end{align}
for almost all $t \in [0,T]$. On the other hand, by~(\ref{eqn:istar=itilde}), we get
	\begin{align}
	\dot{\tilde{\mu}}_t(x)& -   (\bar{\Lambda}^*_{\tilde{\mu}_t, \tilde{m}_t} \tilde{\mu}_t)(x)  \nonumber \\
	& +\tilde{\mu}_t(x) \sum_{\substack{x^\prime\in \X: \\ (x,x^\prime) \in \EX}} (\exp\{\hat{\alpha}_t(\tilde{\mu}_t)(x^\prime) - \hat{\alpha}_t(\tilde{\mu}_t)(x)\} - 1) \bar{\lambda}_{x,x^\prime}  (\tilde{\mu}_t, \tilde{m}_t) \nonumber \\
	& - \sum_{\substack{x_0 \in \X: \\ (x_0, x) \in \EX}} \tilde{\mu}_t(x_0) (\exp\{\hat{\alpha}_t(\tilde{\mu}_t)(x) - \hat{\alpha}_t(\tilde{\mu}_t)(x_0)\} -1) \bar{\lambda}_{x_0,x}(\tilde{\mu}_t,\tilde{m}_t)  = 0,\, \, \forall x \in \X,
	\label{eqn:alphahat-2}
	\end{align}
	for almost all $t \in [0,T]$. Note that, by the optimality condition~(\ref{eqn:alphahat}) and by~(\ref{eqn:mthad=mttilde}), the mapping
		\begin{align*}
	u &\mapsto \biggr( (\bar{\Lambda}^*_{u, \tilde{m}_t} u)(x)  +u(x) \sum_{\substack{x^\prime\in \X: \\ (x,x^\prime) \in \EX}} (\exp\{\hat{\alpha}_t(u)(x^\prime) - \hat{\alpha}_t(u)(x)\} - 1) \bar{\lambda}_{x,x^\prime}  (u, \tilde{m}_t) \nonumber \\
	& \qquad - \sum_{\substack{x_0 \in \X: \\ (x_0, x) \in \EX}} u(x_0) (\exp\{\hat{\alpha}_t(u)(x) - \hat{\alpha}_t(u)(x_0)\} -1) \bar{\lambda}_{x_0,x}(u,\tilde{m}_t), \,  x \in \X \biggr) \in \mathbb{R}^{|\X|}
	\end{align*} 
on $\{u \in \mx: u(x) \geq \delta/2 \, \forall x \in \X\}$ is identically equal to $\dot{\hat{\mu}}_t$ for almost all $t \in [0,T]$. Hence, by~(\ref{eqn:alphahat-1}) and~(\ref{eqn:alphahat-2}), and noting that $\tilde{\mu}_0 = \hat{\mu}_0 = \nu$, Gronwall inequality implies that $\tilde{\mu}_t =  \hat{\mu}_t$ for all $t \in [0,T]$.
	
We have thus shown that $(\tilde{\mu},\tilde{\theta}) = (\hat{\mu}, \hat{\theta})$, and the second equality in~(\ref{eqn:istar=itilde}) implies that $\tilde{I}(\hat{\mu},\hat{\theta}) = I^*(\hat{\mu},\hat{\theta})$. This completes the proof of the theorem.
\end{proof}

\section{Approximating the subsequential rate function}
\label{section:uniqueness-rate-function-whole-space}
Let $\tilde{I}:\DX \times \DY \to [0, + \infty]$ be a subsequential rate function for the family $ \{(\mu_N, \theta_N)\}_{N \geq 1}$, and suppose that, for some $\nu \in \mx$, $\tilde{I}(\mu, \theta) = +\infty$ unless $\mu_0 =\nu$. In this section, we show that $\tilde{I}(\mu, \theta) = I^*(\mu, \theta)$ for all $(\mu, \theta) \in \DX \times \DY$. We shall proceed through a sequence of lemmas. In each lemma, we shall extend the conclusion $\tilde{I}(\mu, \theta) = I^*(\mu, \theta)$ to a larger class of elements $(\mu, \theta)$ by producing a sequence $(\mu^i, \theta^i)$ such that $\tilde{I}(\mu^i, \theta^i) = I^*(\mu^i, \theta^i)$ for all $i \geq 1$, $(\mu^i, \theta^i) \to (\mu, \theta)$ in $\DX \times \DY$ as $i \to\infty$, and $I^*(\mu^i, \theta^i) \to I^*(\mu, \theta)$ as $i \to \infty$. Using these approximations, we finally show that $\tilde{I}(\mu, \theta) = I^*(\mu, \theta)$ for all $(\mu, \theta) \in \DX \times \DY$ (see Theorem~\ref{thm:istar=itilde}).

\begin{figure}
\centering
\begin{tikzpicture}
\node at (-0.2,0) {$0$};
\node at (-0.2,5) {$1$};
\draw (0,0) -- (0,5);
\draw (0,0) -- (5,0);
\node at (2,-0.3) {$\hat{\tau}^i$};
\node at (1,-0.3) {$\frac{1}{i}$};
\node at (0.5,-0.3) {$\tau^i$};
\node at (4,-0.3) {$T$};
\node at (4.5,4.2){$\hat{\mu}(x)$};
\node at (4.5,3.2){$\hat{\mu}^i(x)$};
\draw [thick,red] (0,0) to[out=45,in=200] (1,1.2) to[out=20,in=240] (2,2) to[out=60,in=270] (3,3) to[out=90,in=225] (4,4);
\draw [thick,blue] (0,0) to[out=10,in=180] (0.5,0.3) to [out=20,in=240] (2,1.2);
\draw [thick,red] (2,1.2) to[out=20,in=240] (3,2) to[out=60,in=270] (4,3);
\draw [dashed] (0.5,0.3) -- (0.5,0);
\draw [dashed] (1,1.2) -- (2,1.2);
\draw [dashed] (1,1.2) -- (1,0);
\draw [dashed] (2,1.2) -- (2,0);
\draw [dashed] (4,5) -- (4,0);
\draw [dashed] (0,5) -- (4,5);
\end{tikzpicture}
\caption{Figure depicting the idea of construction of $\hat{\mu}^i$ in the proof of Lemma~\ref{lemma:fix-initial-condition}}
\label{fig:initial-cond}
\end{figure}
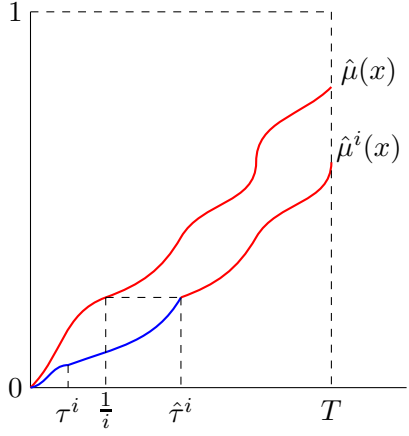

We start with an extension of the conclusion of Theorem~\ref{thm:muhat-bdd-away-from-0} to all initial conditions $\nu$.
\begin{lemma}Let $\nu \in \mx$ and let $\tilde{I} : \DX \times \DY \to [0, + \infty]$ be a subsequential rate function such that $\tilde{I}(\mu, \theta) = +\infty$ unless $\mu_0 = \nu$.  Suppose that $(\hat{\mu},\hat{\theta}) \in \DX \times  \DY$ is such that
\begin{itemize}
\item $I^*(\hat{\mu}, \hat{\theta}) < +\infty$,
\item $\inf_{t \in [\delta,T]} \min_{x \in \X}\hat{\mu}_t(x) > 0$ for all $\delta > 0$,
\item the mapping $[0,T] \ni t \mapsto \hat{\mu}_t \in \mx$ is Lipschitz continuous,
\item $\hat{\theta}$, when viewed as a measure on $[0,T] \times \Y$, admits the representation  $\hat{\theta}(dy dt) = \hat{m}_t(dy) dt$ for some $\hat{m}_t \in M_1(\Y)$ for almost all $t \in [0,T]$, and $\inf_{t \in [0,T]} \min_{y \in \Y} \hat{m}_t(y) >0$.
\end{itemize}	
Then $\tilde{I}(\hat{\mu},\hat{\theta}) = I^*(\hat{\mu},\hat{\theta})$.
		\label{lemma:fix-initial-condition}
	\end{lemma}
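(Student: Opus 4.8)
The plan is to remove the restriction $\inf_{t\in[0,T]}\min_{x\in\X}\hat\mu_t(x)>0$ from Theorem~\ref{thm:muhat-bdd-away-from-0} by approximating $\hat\mu$ near time $0$ by paths that sit strictly inside $\mx$ on all of $[0,T]$, while keeping the fast-component marginals $\hat m_t$ (hence $\hat\theta$) essentially untouched. The construction is the one sketched in Figure~\ref{fig:initial-cond}: fix small $\tau^i < \hat\tau^i$ with $\tau^i,\hat\tau^i \downarrow 0$, let $\hat\mu^i$ agree with $\hat\mu$ on $[\hat\tau^i, T]$, and on $[0,\hat\tau^i]$ replace $\hat\mu$ by a Lipschitz path that first moves from $\nu$ into the relative interior of $\mx$ (bounded away from the boundary by some $\delta_i>0$) during $[0,\tau^i]$ and then interpolates (Lipschitz-ly) from that interior point to $\hat\mu_{\hat\tau^i}$ during $[\tau^i,\hat\tau^i]$. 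Correspondingly set $\hat\theta^i(dy\,dt) = \hat m_t^i(dy)\,dt$ with $\hat m_t^i = \hat m_t$ for $t\in[\hat\tau^i,T]$ and $\hat m_t^i$ chosen on $[0,\hat\tau^i]$ to still satisfy $\inf_t\min_y \hat m_t^i(y)>0$ (e.g.\ keep $\hat m_t^i = \hat m_t$ on $[\hat\tau^i,T]$ and extend by $\hat m_{\hat\tau^i}$ on $[0,\hat\tau^i]$, or simply $\hat m_t^i=\hat m_t$ for $t>0$ and $\hat m_0^i$ irrelevant as a null set). Then $(\hat\mu^i,\hat\theta^i)$ satisfies all the hypotheses of Theorem~\ref{thm:muhat-bdd-away-from-0}: it is Lipschitz, bounded away from the boundary of $\mx$ uniformly over $[0,T]$, and its fast marginals are bounded away from $0$. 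Hence $\tilde I(\hat\mu^i,\hat\theta^i) = I^*(\hat\mu^i,\hat\theta^i)$ for every $i$.

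The second ingredient is convergence: $(\hat\mu^i,\hat\theta^i)\to(\hat\mu,\hat\theta)$ in $\DX\times\DY$, which is immediate since the paths agree outside a shrinking interval $[0,\hat\tau^i]$ and $\mx$ is bounded (so the sup-distance on $[0,\hat\tau^i]$ goes to $0$), and $I^*(\hat\mu^i,\hat\theta^i)\to I^*(\hat\mu,\hat\theta)$. For the latter I would use the nonvariational representation~(\ref{eqn:ratefn-nonvarform}) together with the additive structure of $I^*$ over $[0,\hat\tau^i]$ and $[\hat\tau^i,T]$: the contribution over $[\hat\tau^i,T]$ equals that of $(\hat\mu,\hat\theta)$ restricted to $[\hat\tau^i,T]$, which converges up to $I^*(\hat\mu,\hat\theta)$ by absolute continuity of the integral (using $I^*(\hat\mu,\hat\theta)<+\infty$); and the contribution over $[0,\hat\tau^i]$ must be shown to vanish. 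This is where the construction must be done carefully: one should choose the interior path and the rates $\tau^i, \hat\tau^i, \delta_i$ so that the velocity $\dot{\hat\mu}^i$ on $[0,\hat\tau^i]$, while possibly large, contributes a small amount to $J$. Concretely, I would bound the $[0,\hat\tau^i]$ piece of $I^*$ using~(\ref{eqn:istar-supinside}) directly: on that interval one can take the test function $\alpha\equiv0$ in the first supremum and $g\equiv0$ in the second, giving an \emph{upper} bound only through the full suprema, so instead I would produce an explicit competitor path achieving small cost — e.g.\ a path whose velocity is of order $1/\tau^i$ but travels only a bounded distance, for which the Orlicz/$\tau^*$ cost is $O(\hat\tau^i\cdot \tau^*(\text{const}/\tau^i))$; this does not obviously vanish, so one instead slows the excursion: take $\tau^i = \hat\tau^i$ negligible but let the interior target point approach $\nu$ as $i\to\infty$ at a rate making the required velocity bounded, so the cost over $[0,\hat\tau^i]$ is $O(\hat\tau^i)\to 0$. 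The point is that one has freedom to choose \emph{how far} into the interior to go and can trade this against the interval length.

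The main obstacle is precisely this last estimate: ensuring the approximating paths have $I^*$-cost over $[0,\hat\tau^i]$ tending to $0$ while simultaneously (a) reaching a point at distance $\ge \delta_i$ from $\partial\mx$ for each fixed $i$ (so Theorem~\ref{thm:muhat-bdd-away-from-0} applies) and (b) matching $\hat\mu_{\hat\tau^i}$ at the right-hand endpoint with a Lipschitz join. One must check these requirements are mutually compatible — the key realisation is that for each fixed $i$ we only need $\delta_i>0$ (not bounded below in $i$), so $\delta_i$ may shrink with $i$, which is exactly what keeps the excursion cost small. A secondary technical point is verifying that $\hat\mu_{\hat\tau^i}\to\nu$ as $i\to0$ — this follows if $\hat\mu$ is (right-)continuous at $0$ with $\hat\mu_0=\nu$, which is guaranteed since $\hat\mu\in\DX$ and $I^*(\hat\mu,\hat\theta)<\infty$ forces absolute continuity of $t\mapsto\hat\mu_t$ by Lemma~\ref{lemma:istarfinite}, in particular continuity at $0$; and one uses $\tilde I(\hat\mu,\hat\theta)=+\infty$ unless $\hat\mu_0=\nu$ together with $\tilde I\ge I^*$ only indirectly, the relevant fact being that we are approximating a \emph{given} admissible $(\hat\mu,\hat\theta)$. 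Finally, lower semicontinuity of $\tilde I$ gives $\tilde I(\hat\mu,\hat\theta)\le \liminf_i \tilde I(\hat\mu^i,\hat\theta^i) = \liminf_i I^*(\hat\mu^i,\hat\theta^i) = I^*(\hat\mu,\hat\theta)$, and combined with the always-valid inequality $\tilde I\ge I^*$ this yields $\tilde I(\hat\mu,\hat\theta)=I^*(\hat\mu,\hat\theta)$, completing the proof.
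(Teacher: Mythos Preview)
Your overall strategy (approximate by nicer paths, apply Theorem~\ref{thm:muhat-bdd-away-from-0} to the approximants, pass to the limit via lower semicontinuity of $\tilde I$ and convergence of $I^*$) is the paper's strategy. But there is a genuine gap in your execution at the very step where you invoke Theorem~\ref{thm:muhat-bdd-away-from-0}.

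You assert that $(\hat\mu^i,\hat\theta^i)$ is ``bounded away from the boundary of $\mx$ uniformly over $[0,T]$''. This cannot be true in the interesting case. The approximating path must satisfy $\hat\mu^i_0=\nu$ (otherwise $\tilde I(\hat\mu^i,\hat\theta^i)=+\infty$ by hypothesis and the whole argument collapses), and the lemma is designed precisely for the case $\nu\in\partial\mx$; when $\nu$ has a zero coordinate, $\min_x\hat\mu^i_0(x)=0$, so the uniform positivity hypothesis of Theorem~\ref{thm:muhat-bdd-away-from-0} fails at $t=0$. The paper does \emph{not} cite Theorem~\ref{thm:muhat-bdd-away-from-0} directly for the approximants; instead it chooses the initial segment of $\hat\mu^i$ to be the McKean--Vlasov flow $\mu_t(\nu,\hat\theta)$, which (i) immediately carries the path into the interior and (ii) makes the optimiser in the $\alpha$-supremum identically $0$ on that segment. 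With this choice the optimising $\hat\alpha^i_t(\cdot)$ is bounded and continuous on all of $[0,T]\times\mx$ (zero on the initial segment, bounded on the rest because the path is in the interior there), and the \emph{argument} of Theorem~\ref{thm:muhat-bdd-away-from-0} can be re-run. Your construction provides no mechanism to control the optimiser at $t=0$.

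A related, downstream problem is the cost estimate on $[0,\hat\tau^i]$, which you flag as the main obstacle but do not resolve. Your proposal to keep the velocity bounded while letting the interior target approach $\nu$ does not give an $O(\hat\tau^i)$ bound: near the boundary the integrand $\sup_\alpha\{\cdots\}$ behaves like $\log(1/\hat\mu^i_t(x))$ and blows up, so bounded velocity is not enough. The paper's construction (zero-cost McKean--Vlasov flow followed by unit-speed mass transport along single edges) is not cosmetic: it yields the explicit $u\log u$ estimate that shows the $[0,\hat\tau^i]$ contribution is $o(1)$. You also let $\hat\mu^i$ agree with $\hat\mu$ on $[\hat\tau^i,T]$ rather than with a time-shifted copy; this forces the join point $\hat\mu_{\hat\tau^i}$ to sit arbitrarily close to $\partial\mx$ as $\hat\tau^i\downarrow0$, compounding both difficulties above. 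The paper time-shifts so that the join is at $\hat\mu_{1/i}$, which is uniformly in the interior for each fixed $i$.
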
	
\begin{proof}
We begin with some notations. Let $\X_0 = \{x \in \X: \hat{\mu}_0(x) = 0\}$. For each $x \in \X_0$, let $\{x^x_k, 1 \leq k \leq l(x)\}$ be such that $\hat{\mu}_0(x_1^x) \geq 1/|\X_0|$ (in particular, $x_1^x \notin \X_0$), $(x^x_k,x^x_{k+1}) \in \EX$ for all $1 \leq k \leq l(x)-1$ and $(x^x_{l(x)},x) \in \EX$, i.e., the collection of edges $\{(x^x_k, x^x_{k+1}), 1 \leq k \leq l(x)-1\} \cup (x^x_{l(x)}, x)$ form a directed path of length $l(x)$ from $x_1^x$ to $x$. Also, for the given $\nu \in \mx$, let $\mu(\nu, \hat{\theta}) \in D([0,\infty),\mx)$ denote the unique solution to the ODE $\dot{\mu}_t =\bar{\Lambda}^*_{\mu_t, \hat{m}_t} \mu_t$ with initial condition $ \mu_ 0 = \nu$.

For each $i \geq 1$, we define a path $\hat{\mu}^i \in \DX$ as follows. Define $\hat{\mu}^i_t = \mu_t(\hat{\mu}_0,\hat{\theta})$ for $t \in [0, \tau^i]$ where $\tau^i = \inf \{t>0: \mu_t(\hat{\mu}_0, \hat{\theta})(x) = \hat{\mu}_{1/i}(x)/2 \text{ for some } x \in \X_0\}$. Note that $\tau^i < +\infty$ for $i$ sufficiently large. Also note that $\hat{\mu}^i_{\tau^i}(x) > 0$ for all $x \in \X$, and that the supremum over $\alpha_t$ in the definition of $I^*(\hat{\mu}^i, \hat{\theta})$ (see~(\ref{eqn:istar-supinside})) is attained at $\alpha_t = 0$ for all $t \in [0,\tau^i]$. Let $\varepsilon_i(x) = \hat{\mu}_{1/i}(x) - \hat{\mu}^i_{\tau^i}(x)$ for $x \in \X$ and $i \geq 1$. Since the mapping $t \mapsto \hat{\mu}_t$ is Lipschitz continuous, we see that $\tau^i \to 0$ as $i \to \infty$, and $\varepsilon_i(x) \to 0$ as $i \to \infty$ for all $x \in \X$. For each $x \in \tilde{\X}_0 \coloneqq \X_0 \cap \{x \in \X_0 : \varepsilon_i(x) > 0\}$, we shall now move the mass $\varepsilon_i(x)$ from the vertex $x_1^x$ to $x$ via the edges defined in the previous paragraph using a piecewise constant velocity path. Denote the elements of $\tilde{\X}_0$ by $x_1, x_2 \ldots, x_{|\tilde{\X}_0|}$, let  $l(x_0) = 0$ and $\varepsilon_i(x_0) = 0$. Given $r \in \{0, 1, \ldots, |\tilde{\X}_0|-1 \}$, $s \in \{0, 1, \ldots, l(x_{r+1})-1\}$, and $t \in [\tau^i+\sum_{m=0}^r l(x_m) \varepsilon_i(x_m) + s \varepsilon_i(x_{r+1}),\tau^i+\sum_{m=0}^r l(x_m) \varepsilon_i(x_m) + (s+1) \varepsilon_i(x_{r+1}))$, define
\begin{align*}
\dot{\hat{\mu}}^i_t(x) \coloneqq \left\{
\begin{array}{lll}
-1 & \text{ if } x = x^{x_{r+1}}_{s+1} \\
1 & \text{ if } x = x^{x_{r+1}}_{s+2} \\
0 & \text{ otherwise},
\end{array}
\right.
\end{align*}
i.e., we transport a mass of $\varepsilon_i(x_{r+1})$ at unit rate from the node $x^{x_{r+1}}_{s+1}$ to $x^{x_{r+1}}_{s+2}$ during the above time interval. Note that we have $\hat{\mu}^i_t(x) = \hat{\mu}_t(x)$ for all $x \in \tilde{\X}_0$ at time $t = \tau^i+\sum_{m=1}^{|\tilde{\X}_0|} l(x_m) \varepsilon_i(x_m)$. Similarly, for $x \in \X \setminus \tilde{\X}_0$ with $\varepsilon_i(x) > 0$, one defines a sequence of edges from a suitable $x^\prime \in \X \setminus \tilde{\X}_0$ (possibly from multiple $x^\prime \in \X \setminus \tilde{\X}_0$) with $\varepsilon_i(x^\prime) < 0$ and moves the mass $\varepsilon_i(x)$ to $x$ through similar piecewise constant velocity trajectories defined above. For each $x \in \X \setminus \tilde{\X}_0$ with $\varepsilon_i(x) < 0$, we similarly move the mass $\varepsilon_i(x)$ from $x$ to suitable vertices in $\X \setminus \tilde{\X}_0$ via piecewise constant velocity trajectories.  At the end of this procedure, we have $\hat{\mu}^i_{\hat{\tau}^i} = \hat{\mu}_{1/i}$ for some $\hat{\tau}^i \geq \tau^i$. We now define $\hat{\mu}^i_t = \hat{\mu}_{t+1/i - \hat{\tau}^i}$ for all $t \in [\hat{\tau}^i, T]$ (see Figure~\ref{fig:initial-cond} for a pictorial representation of $\hat{\mu}^i$). Since $\varepsilon_i(x) \to 0$ as $i \to \infty$ for all $x \in \X$, we have that $\hat{\tau}^i \to 0$ as $i \to \infty$. 

Also, for each $i \geq 1$ and $t \in [0,T]$, define the probability measure $\hat{m}_t^i$ on $\Y$ by
\begin{align*}
\hat{m}_t^i(y) \coloneqq \left\{
\begin{array}{lll}
\hat{m}_t(y) & \text{ if } t \in [0,\tau^i],\\
\hat{m}_{\tau^i}(y) & \text{ if } t\in [\tau^i,\hat{\tau}^i], \\
\hat{m}_{t+1/i-\hat{\tau}^i}(y) & \text{ if } t \in (\hat{\tau}^i,T], 
\end{array}
\right.
\end{align*}
for all $y \in \Y$, and define the measure $\hat{\theta}^i$ on $[0,T] \times \Y$ by $\hat{\theta}^i(dy dt) = \hat{m}_t^i(dy) dt$. Clearly, $\hat{\theta}^i \in \DY$.

Thanks to the fact that $\hat{\mu}^i_{{\tau}^i}(x) > 0$ for all $x \in \X$ and the fact that $\alpha_t = 0$ attains the supremum in the definition of $I^*(\hat{\mu}^i, \hat{\theta}^i)$ for all $t \in [0, \tau^i]$, using arguments similar to those used in the proof of Theorem~\ref{thm:muhat-bdd-away-from-0}, one can now construct a bounded measurable function $\hat{\alpha}^i : [0,T] \times \mx \to \mathbb{R}^{|\X|}$ such that $\hat{\alpha}_t^i(\hat{\mu}^i_t)$ attains the supremum over ${\alpha}_t$ in the definition of $I^*(\hat{\mu}^i, \hat{\theta}^i)$ (in~(\ref{eqn:istar-supinside})) and $\hat{\alpha}^i_t(\cdot)$ is continuous on $\mx$ for all $t \in [0,T]$. Similarly, since $\hat{\theta}^i$ satisfies the conditions of Theorem~\ref{thm:muhat-bdd-away-from-0}, one can construct a bounded measurable function $\hat{g}^i: [0,T] \times \mx \times \Y \to \mathbb{R}$ such that $\hat{g}^i_t(\hat{\mu}^i_t, \cdot)$ attains the supremum over $g_t$ in the definition of $I^*(\hat{\mu}^i,\hat{\theta}^i)$ and $\hat{g}^i_t(\cdot)$ is continuous on $\mx$ for each $t \in [0,T]$.  Hence, using arguments similar to those used in the proof of Theorem~\ref{thm:muhat-bdd-away-from-0}, one concludes that $\tilde{I}(\hat{\mu}^i, \hat{\theta}^i) = I^*(\hat{\mu}^i, \hat{\theta}^i)$ for all $i \geq 1 $.

Let us now show that $I^*(\hat{\mu}^i, \hat{\theta}^i) \to I^*(\hat{\mu}, \hat{\theta})$ as $i \to \infty$. For the fast component, since $\hat{\tau}^i \to 0$, we see that $\hat{\theta}^i \to \hat{\theta}$ in $\DY$ as $i \to \infty$. By assumption~\ref{assm:b2}, we see that
\begin{align*}
0 \leq  \sup_{i \geq 1, t \in [0,T]} & \biggr\{ \sup_{g_t \in \mathbb{R}^{|\Y|}} - \int_{\Y} \biggr( L_{\hat{\mu}^i_t}g_t(\cdot)(y)  \\
& +  \int_{\EY}  \tau(Dg_t(y,\Delta)) \gamma_{y,y+d\Delta}(\hat{\mu}^i_t)\biggr) \hat{m}^i_t(dy) \biggr\} < +\infty,
\end{align*}
and hence the bounded convergence theorem immediately yields
\begin{align*}
\int_{[0,\hat{\tau}^i]} \sup_{g_t \in B(\Y)} & \biggr\{ -\int_\Y \left(L_{\hat{\mu}^i_t}g_t(\cdot)(y) + \int_{\EY} \tau(Dg_t(y,\Delta)) \gamma_{y,y+d\Delta}(\hat{\mu}^i_t) \right) \hat{m}^i_t(dy) \biggr\} dt \to 0
\end{align*}
and
\begin{align*}
\int_{[T+1/i-\hat{\tau}^i,T]}& \sup_{g_t \in B(\Y)} \left\{ -\int_\Y \left(L_{\hat{\mu}_t}g_t(\cdot)(y) + \int_{\EY} \tau(Dg_t(y,\Delta)) \gamma_{y,y+d\Delta}(\hat{\mu}_t) \right) \hat{m}_t(dy) \right\} dt \to 0
\end{align*}
as $i \to \infty$. Noting that
$\hat{m}^i_t = \hat{m}_{t+1/i-\hat{\tau}^i}$ and $\hat{\mu}^i_t = \hat{\mu}_{t+1/i-\hat{\tau}^i}$ for all $t \in [\hat{\tau}^i, T]$, the above convergences imply that
\begin{align*}
\int_{[0,T]}& \sup_{g_t \in B(\Y)} \left\{ -\int_\Y \left(L_{\hat{\mu}^i_t}g_t(\cdot)(y) + \int_{\EY} \tau(Dg_t(y,\Delta)) \gamma_{y,y+d\Delta}(\hat{\mu}^i_t) \right) \hat{m}^i_t(dy) \right\} dt \\
& \to \int_{[0,T]}\sup_{g_t \in B(\Y)} \left\{ -\int_\Y \left(L_{\hat{\mu}_t}g_t(\cdot)(y) + \int_{\EY} \tau(Dg_t(y,\Delta)) \gamma_{y,y+d\Delta}(\hat{\mu}_t) \right) \hat{m}_t(dy)\right\} dt
\end{align*}
as $i \to \infty$. 

For the slow component, since $\hat{\tau}^i \to 0$ as $i \to \infty$, using the absolute continuity of the mapping $t \mapsto \hat{\mu}_t $ and the definition of the paths $\hat{\mu}^i$, it follows from the dominated convergence theorem that $\hat{\mu}^i_t \to \hat{\mu}_t$ as $i \to \infty$ uniformly in $t \in [0,T]$ and hence we have that $\hat{\mu}^i \to \hat{\mu}$ in $\DX$ as $i \to \infty$. Let us first show that
\begin{align*}
\int_{[0,\hat{\tau}^i]}  \sup_{\alpha_t \in \mathbb{R}^{|\X|}} \biggr\{ \langle \alpha_t, \dot{\hat{\mu}}^i_t  - \bar{\Lambda}^*_{\hat{\mu}^i_t, \hat{m}^i_t}\hat{\mu}^i_t \rangle - \int_{\X \times \EX} \tau(D\alpha_t(x,\Delta))\bar{\lambda}_{x,x+d\Delta}(\hat{\mu}^i_t,\hat{m}^i_t) \hat{\mu}^i_t(dx) \biggr\} dt
\end{align*}
converges to $0$ as $i \to \infty$. Towards this, let $t \in [\tau^i+\sum_{m=0}^r l(x_m) \varepsilon_i(x_m) + s \varepsilon_i(x_{r+1}),\tau^i+\sum_{m=0}^r l(x_m) \varepsilon_i(x_m) + (s+1) \varepsilon_i(x_{r+1}))$ where  $r \in \{0, 1, \ldots, |\tilde{\X}_0|-1 \}$, and $s \in \{0, 1, \ldots, l(x_{r+1})-1\}$. Note that, we have 
\begin{align*}
\sup_{\alpha_t \in \mathbb{R}^{|\X|}} & \biggr\{ \langle \alpha_t, \dot{\hat{\mu}}^i_t  - \bar{\Lambda}^*_{\hat{\mu}^i_t, \hat{m}^i_t}\hat{\mu}^i_t \rangle - \int_{\X \times \EX} \tau(D\alpha_t(x,\Delta))\bar{\lambda}_{x,x+d\Delta}(\hat{\mu}^i_t,\hat{m}^i_t) \hat{\mu}^i_t(dx) \biggr\}  \\
& \leq \sup_{\alpha_t \in \mathbb{R}^{|\X|}} \biggr( (\alpha_t(x^{x_{r+1}}_{s+2}) - \alpha_t(x^{x_{r+1}}_{s+1})) - (\exp\{\alpha_t(x^{x_{r+1}}_{s+2}) - \alpha_t(x^{x_{r+1}}_{s+1}) \} - 1) \\
& \qquad \qquad \times \bar{\lambda}_{x^{x_{r+1}}_{s+1},x^{x_{r+1}}_{s+2}}(\hat{\mu}^i_t, \hat{m}^i_t) \hat{\mu}^i_t(x^{x_{r+1}}_{s+1}) \biggr) \\
& \qquad -  \inf_{\alpha_t \in \mathbb{R}^{|\X|}}\sum_{\substack{(x,x^\prime) \in \EX : \\ (x,x^\prime) \neq (x^{x_{r+1}}_{s+1}, x^{x_{r+1}}_{s+2})}} (\exp\{\alpha_t(x^\prime) - \alpha_t(x)\} - 1) \bar{\lambda}_{x,x^\prime}(\hat{\mu}^i_t, \hat{m}^i_t) \hat{\mu}^i_t(x) \\
& \leq \log\frac{1}{c\hat{\mu}^i_t(x^{x_{r+1}}_{s+1})} + c_1
\end{align*}
where $c =\min_{(x,x^\prime) \in \EX}\min_{y \in \Y} \min_{\xi \in \mx}\lambda_{x,x^\prime}(\xi, y)$ and $c_1 > 0$ is a suitable constant to bound the extra additive terms. Hence, using a variable change $u = c\hat{\mu}^i_t(x^{x_{r+1}}_{s+1})$, we see that
\begin{align*}
\int \sup_{\alpha_t \in \mathbb{R}^{|\X|}} & \biggr\{ \langle \alpha_t, \dot{\hat{\mu}}^i_t  - \bar{\Lambda}^*_{\hat{\mu}^i_t, \hat{m}^i_t}\hat{\mu}^i_t \rangle - \int_{\X \times \EX} \tau(D\alpha_t(x,\Delta))\bar{\lambda}_{x,x+d\Delta}(\hat{\mu}^i_t,\hat{m}^i_t) \hat{\mu}^i_t(dx) \biggr\} dt \\
& \leq -\frac{1}{c}(u \log u -u)|_{c{\hat{\mu}^i}_{t_1}(x^{x_{r+1}}_{s+1})}^{c{\hat{\mu}^i}_{t_2}(x^{x_{r+1}}_{s+1})} +c_1\varepsilon_i(x_{r+1})  \\
& = o(1)
\end{align*}
as $i \to \infty$, where $t_1 = \tau^i+\sum_{m=0}^r l(x_m) \varepsilon_i(x_m) + s \varepsilon_i(x_{r+1}) $, $t_2 = t_1 + \varepsilon_i(x_{r+1})$ and the above integral is evaluated over the time interval $[\tau^i+\sum_{m=0}^r l(x_m) \varepsilon_i(x_m) + s \varepsilon_i(x_{r+1}),\tau^i+\sum_{m=0}^r l(x_m) \varepsilon_i(x_m) + (s+1) \varepsilon_i(x_{r+1}))$. Hence, repeating the above calculation for each constant velocity section of the path $\hat{\mu}^i$ during the time interval $[\tau^i,\hat{\tau}^i]$, we see that
\begin{align*}
\int_{[0,\hat{\tau}^i]}  \sup_{\alpha_t \in \mathbb{R}^{|\X|}} \biggr\{ \langle \alpha_t, \dot{\hat{\mu}}^i_t  - \bar{\Lambda}^*_{\hat{\mu}^i_t, \hat{m}^i_t}\hat{\mu}^i_t \rangle - \int_{\X \times \EX} \tau(D\alpha_t(x,\Delta))\bar{\lambda}_{x,x+d\Delta}(\hat{\mu}^i_t,\hat{m}^i_t) \hat{\mu}^i_t(dx) \biggr\} dt
\end{align*}
converges to $0$ as $i \to \infty$. Therefore, noting that $\hat{\mu}^i_t = \hat{\mu}_{t+1/i - \hat{\tau}^i}$ and $\hat{m}^i_t = \hat{m}_{t+1/i-\hat{\tau}^i}$ for $t \in [\hat{\tau}^i, T]$, and $\hat{\mu}^i_t = \mu_t(\hat{\mu}_0,\hat{\theta})$ on $t \in [0,\tau^i]$, we have
\begin{align*}
\biggr| & \int_{[0,T]}\sup_{\alpha_t \in \mathbb{R}^{|\X|}} \biggr\{ \langle \alpha_t, \dot{\hat{\mu}}_t  - \bar{\Lambda}^*_{\hat{\mu}_t, \hat{m}_t}\hat{\mu}_t \rangle - \int_{\X \times \EX} \tau(D\alpha_t(x,\Delta))\bar{\lambda}_{x,x+d\Delta}(\hat{\mu}_t,\hat{m}_t) \hat{\mu}_t(dx) \biggr\} dt  \\
& \qquad -  \int_{[0,T]} \sup_{\alpha_t \in \mathbb{R}^{|\X|}} \biggr\{ \langle \alpha_t, \dot{\hat{\mu}}^i_t  - \bar{\Lambda}^*_{\hat{\mu}^i_t, \hat{m}^i_t}\hat{\mu}^i_t \rangle - \int_{\X \times \EX} \tau(D\alpha_t(x,\Delta))\bar{\lambda}_{x,x+d\Delta}(\hat{\mu}^i_t,\hat{m}^i_t) \hat{\mu}^i_t(dx) \biggr\} dt \biggr| \\
& \leq 
\int_{[0,1/i]}  \sup_{\alpha_t \in \mathbb{R}^{|\X|}} \biggr\{ \langle \alpha_t, \dot{\hat{\mu}}_t  - \bar{\Lambda}^*_{\hat{\mu}_t, \hat{m}_t}\hat{\mu}_t \rangle - \int_{\X \times \EX} \tau(D\alpha_t(x,\Delta))\bar{\lambda}_{x,x+d\Delta}(\hat{\mu}_t,\hat{m}_t) \hat{\mu}_t(dx) \biggr\} dt \\
&\qquad + \int_{[T+1/i - \hat{\tau}^i,T]}  \sup_{\alpha_t \in \mathbb{R}^{|\X|}} \biggr\{ \langle \alpha_t, \dot{\hat{\mu}}_t  - \bar{\Lambda}^*_{\hat{\mu}_t, \hat{m}_t}\hat{\mu}_t \rangle - \int_{\X \times \EX} \tau(D\alpha_t(x,\Delta))\bar{\lambda}_{x,x+d\Delta}(\hat{\mu}_t,\hat{m}_t) \hat{\mu}_t(dx) \biggr\} dt \\
& \qquad + \int_{[0,\hat{\tau}^i]}  \sup_{\alpha_t \in \mathbb{R}^{|\X|}} \biggr\{ \langle \alpha_t, \dot{\hat{\mu}}^i_t  - \bar{\Lambda}^*_{\hat{\mu}^i_t, \hat{m}^i_t}\hat{\mu}^i_t \rangle - \int_{\X \times \EX} \tau(D\alpha_t(x,\Delta))\bar{\lambda}_{x,x+d\Delta}(\hat{\mu}^i_t,\hat{m}^i_t) \hat{\mu}^i_t(dx) \biggr\} dt \\
& \to 0
\end{align*}
as $i \to \infty$. We have thus shown that $I^*(\hat{\mu}^i, \hat{\theta}^i) \to I^*(\hat{\mu}, \hat{\theta})$ as $i \to \infty$.

Since $(\hat{\mu}^i, \hat{\theta}^i) \to (\hat{\mu},\hat{\theta})$ in $\DX \times \DY$ as $i\to \infty$, the lower semicontinuity of $\tilde{I}$ implies that $\liminf_{i \to \infty} \tilde{I}(\hat{\mu}^i, \hat{\theta}^i) \geq \tilde{I}(\hat{\mu}, \hat{\theta})$. Therefore, using the above convergence and the fact that $\tilde{I}(\hat{\mu}^i, \hat{\theta}^i) = I^*(\hat{\mu}^i, \hat{\theta}^i)$ for all $i \geq 1$, we see that $\tilde{I}(\hat{\mu}, \hat{\theta}) \leq I^*(\hat{\mu}, \hat{\theta})$. On the other hand, since $\tilde{I}(\hat{\mu}, \hat{\theta})  \geq I^*(\hat{\mu}, \hat{\theta})$, it follows that $\tilde{I}(\hat{\mu}, \hat{\theta}) = I^*(\hat{\mu}, \hat{\theta})$. This completes the proof of the lemma.
\end{proof}

\begin{remark}
\label{remark:itilde=istar}
We shall repeatedly use the immediately preceding argument; starting with an element $(\hat{\mu}, \hat{\theta}) \in \DX \times \DY$, we shall produce a sequence $(\hat{\mu}^i, \hat{\theta}^i) \in \DX \times \DY$, $i \geq 1$, such that $\tilde{I}(\hat{\mu}^i, \hat{\theta}^i) = I^*(\hat{\mu}^i, \hat{\theta}^i)$ for all $i \geq 1$, $(\hat{\mu}^i, \hat{\theta}^i) \to (\hat{\mu}, \hat\theta)$ in $\DX \times \DY$ as $i \to \infty$ and $I^*(\hat{\mu}^i, \hat{\theta}^i) \to I^*(\hat{\mu}, \hat{\theta})$ as $i\to \infty$, and use the above argument to conclude that $\tilde{I}(\hat{\mu}, \hat{\theta}) = I^*(\hat{\mu}, \hat{\theta})$.
\end{remark}

We now extend the conclusion of the previous lemma to all elements $\hat{\theta}  \in \DY$.
\begin{lemma}Let $\nu \in \mx$ and let $\tilde{I} : \DX \times \DY \to [0, + \infty]$ be a subsequential rate function such that $\tilde{I}(\mu, \theta) = +\infty$ unless $\mu_0 = \nu$. Suppose that
 $(\hat{\mu},\hat{\theta}) \in \DX \times \DY$ is such that 
\begin{itemize}
\item $I^*(\hat{\mu}, \hat{\theta}) < +\infty$,
\item $\inf_{t \in [\delta,T]} \min_{x \in \X}\hat{\mu}_t(x) > 0$ for all $\delta > 0$,
\item the mapping $[0,T] \ni t \mapsto \hat{\mu}_t \in \mx$ is Lipschitz continuous.
\end{itemize}	 
Then $\tilde{I}(\hat{\mu}, \hat{\theta}) = I^*(\hat{\mu}, \hat{\theta})$.
\label{lemma:fix-mhat}
\end{lemma}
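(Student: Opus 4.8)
The plan is to approximate $\hat\theta$ by elements of $\DY$ whose disintegrations stay uniformly bounded away from the boundary of $M_1(\Y)$, apply Lemma~\ref{lemma:fix-initial-condition} to each approximant, and then pass to the limit using lower semicontinuity of $\tilde I$ together with continuity of $I^*$ along the approximating sequence, exactly as in Remark~\ref{remark:itilde=istar}. Concretely, let $u_\Y$ be the uniform probability measure on $\Y$, set $\hat m^i_t:=(1-\tfrac1i)\hat m_t+\tfrac1i u_\Y$ for $i\ge1$, and let $\hat\theta^i$ be the element of $\DY$ given, as a measure on $[0,T]\times\Y$, by $\hat\theta^i(dy\,dt)=\hat m^i_t(dy)\,dt$. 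Then $\inf_{t\in[0,T]}\min_{y\in\Y}\hat m^i_t(y)\ge (i|\Y|)^{-1}>0$, and $\sup_{t\in[0,T]}\|\hat m^i_t-\hat m_t\|\le 2/i$, so $\hat\theta^i\to\hat\theta$ in $\DY$ as $i\to\infty$. Note that $\hat\mu$ is left unchanged, which makes this step simpler than Lemma~\ref{lemma:fix-initial-condition}.

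The substantive task is to show $I^*(\hat\mu,\hat\theta^i)<+\infty$ for every $i$ and $I^*(\hat\mu,\hat\theta^i)\to I^*(\hat\mu,\hat\theta)$. First I would observe that $U^{\alpha,g}_T(\hat\mu,\cdot)$ is affine on $\DY$ — the disintegration $m_s$ enters~(\ref{eqn:def-u}) linearly and $\hat\mu$ is fixed — so $I^*(\hat\mu,\cdot)=\sup_{\alpha,g}U^{\alpha,g}_T(\hat\mu,\cdot)$ is convex and lower semicontinuous on $\DY$. Writing $\theta^u(dy\,dt):=u_\Y(dy)\,dt$, convexity gives $I^*(\hat\mu,\hat\theta^i)\le(1-\tfrac1i)I^*(\hat\mu,\hat\theta)+\tfrac1i I^*(\hat\mu,\theta^u)$, so both the finiteness of $I^*(\hat\mu,\hat\theta^i)$ and the bound $\limsup_i I^*(\hat\mu,\hat\theta^i)\le I^*(\hat\mu,\hat\theta)$ will follow once $I^*(\hat\mu,\theta^u)<+\infty$ is established; the reverse bound $\liminf_i I^*(\hat\mu,\hat\theta^i)\ge I^*(\hat\mu,\hat\theta)$ is immediate from lower semicontinuity of $I^*(\hat\mu,\cdot)$ and $\hat\theta^i\to\hat\theta$.

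To prove $I^*(\hat\mu,\theta^u)<+\infty$ I would use the expression~(\ref{eqn:istar-supinside}), which applies since $\hat\mu$ is absolutely continuous. Its inner $g$-supremum at $m_t\equiv u_\Y$ is bounded uniformly in $t$ by $\max_{y\in\Y}\sum_{y':(y,y')\in\EY}\gamma_{y,y'}(\hat\mu_t)$ — the lower bound via $g\equiv0$, the upper bound after rewriting the integrand as $\sum_{y'}(1-e^{g(y')-g(y)})\gamma_{y,y'}(\hat\mu_t)$ and using $1-e^z\le1$ — and this is finite by~\ref{assm:b2}. The point for the $\alpha$-part is that the integrand in~(\ref{eqn:istar-supinside}) depends on $m_t$ only through the averaged rates $\bar\lambda_{x,x+d\Delta}(\hat\mu_t,m_t)$, which by~\ref{assm:a2} lie in a fixed product of compact subintervals $\prod_{(x,x')\in\EX}[c_{x,x'},C_{x,x'}]$ with $c_{x,x'}>0$, no matter what $m_t$ is. Using the entropy/flow representation underlying Theorem~\ref{thm:varproblem}: take $h_\X$ from~(\ref{eqn:ratefn-nonvarform}) and let $f_{x,x'}(t):=(h_\X(t,x,\Delta)+1)\,\bar\lambda_{x,x+d\Delta}(\hat\mu_t,\hat m_t)\hat\mu_t(x)$ with $\Delta=(x,x')$; by~(\ref{eqn:cond-hx}) this flow realises the continuity equation $\dot{\hat\mu}_t(x)=\sum_{x_0:(x_0,x)\in\EX}f_{x_0,x}(t)-\sum_{x':(x,x')\in\EX}f_{x,x'}(t)$, a condition that does not involve $m_t$, so $f$ is equally admissible for the path $\hat\mu$ when $\hat m_t$ is replaced by $u_\Y$. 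Since the base rates $\bar\lambda_{x,x+d\Delta}(\hat\mu_t,u_\Y)\hat\mu_t(x)$ and $\bar\lambda_{x,x+d\Delta}(\hat\mu_t,\hat m_t)\hat\mu_t(x)$ differ only by a multiplicative factor confined to a fixed compact subset of $(0,\infty)$, evaluating the $\X$-part of $I^*$ at $\theta^u$ using this same flow exceeds the $\X$-part of $I^*(\hat\mu,\hat\theta)$ by at most $\Gamma\sum_{(x,x')\in\EX}\int_0^T f_{x,x'}(t)\,dt+CT$ for constants $\Gamma,C$ depending only on the $c_{x,x'},C_{x,x'}$. Finally $\sum_{(x,x')}\int_0^T f_{x,x'}(t)\,dt<+\infty$, because $\tau^*$ is superlinear: wherever $f_{x,x'}(t)$ exceeds $e^{2}$ times its base rate one checks that $f_{x,x'}(t)\le\tau^*(h_\X(t,x,\Delta))\,\bar\lambda_{x,x+d\Delta}(\hat\mu_t,\hat m_t)\hat\mu_t(x)$, whose integral is at most $I^*(\hat\mu,\hat\theta)<\infty$ by~(\ref{eqn:ratefn-nonvarform}), while on the complementary set $f_{x,x'}(t)$ is bounded by $e^{2}C_{x,x'}$. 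Hence $I^*(\hat\mu,\theta^u)<+\infty$.

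Putting the pieces together, $(\hat\mu,\hat\theta^i)$ meets every hypothesis of Lemma~\ref{lemma:fix-initial-condition} — $\hat\mu$ is Lipschitz with $\inf_{t\in[\delta,T]}\min_x\hat\mu_t(x)>0$ for all $\delta>0$, $I^*(\hat\mu,\hat\theta^i)<\infty$, and $\inf_t\min_y\hat m^i_t(y)>0$ — so $\tilde I(\hat\mu,\hat\theta^i)=I^*(\hat\mu,\hat\theta^i)$ for all $i$. Combining this with $I^*(\hat\mu,\hat\theta^i)\to I^*(\hat\mu,\hat\theta)$, the lower semicontinuity of $\tilde I$, and the inequality $\tilde I\ge I^*$ on all of $\DX\times\DY$ (from Theorem~\ref{thm:rate-function-martingale} and the definition~(\ref{eqn:istar}) of $I^*$), precisely as in Remark~\ref{remark:itilde=istar}, gives $\tilde I(\hat\mu,\hat\theta)=I^*(\hat\mu,\hat\theta)$. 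The main obstacle is the finiteness claim $I^*(\hat\mu,\theta^u)<+\infty$: one has to rule out that switching the averaging weights from $\hat m_t$ to a fixed interior measure creates a non-integrable blow-up of the cost near the times where $\hat\mu$ approaches $\partial\mx$; the mechanism that prevents this is that $I^*(\hat\mu,\hat\theta)<\infty$ already forces the optimal flow for $\hat\mu$ to have finite total mass over $[0,T]$, together with the two-sided bounds~\ref{assm:a2} on the jump rates.
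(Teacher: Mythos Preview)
Your argument is correct and takes a genuinely different route from the paper's. Both proofs use the same approximation $\hat m^i_t=(1-\tfrac1i)\hat m_t+\tfrac1i u_\Y$ (the paper writes it as $(\hat m_t+1/i)/(1+|\Y|/i)$) and both finish via Remark~\ref{remark:itilde=istar}. The difference lies in how $I^*(\hat\mu,\hat\theta^i)\to I^*(\hat\mu,\hat\theta)$ is obtained.

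The paper argues pointwise continuity of the two inner suprema in~(\ref{eqn:istar-supinside}) as functions of $m_t$ via Berge's maximum theorem, and then invokes the bounded convergence theorem, asserting that both integrands are bounded uniformly in $(i,t)\in\mathbb{N}\times[0,T]$. Your approach instead exploits that $U^{\alpha,g}_T(\hat\mu,\cdot)$ is affine in $\theta$, so $I^*(\hat\mu,\cdot)$ is convex and lower semicontinuous on $\DY$; this reduces the convergence to the single estimate $I^*(\hat\mu,\theta^u)<\infty$, which you prove by transporting the optimal flow from Theorem~\ref{thm:varproblem} and using the two-sided bounds in~\ref{assm:a2} together with superlinearity of $\tau^*$ to control the total flow mass. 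This is a clean and self-contained argument.

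What each buys: the paper's route is shorter when the uniform-in-$t$ bound on the $\alpha$-supremum is immediate, but that bound is delicate here since $\hat\mu_t$ may touch $\partial\mx$ at $t=0$ (the $\alpha$-supremum can blow up like $|\log t|$ near $0$ while remaining integrable). Your convexity argument sidesteps this entirely, trading the uniform bound for the finiteness of a single reference cost; the flow-transport computation you give is exactly the mechanism that makes this work, and it is reusable in later approximation steps where boundary behaviour is again the issue.
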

\begin{proof} 
Let $\hat{\theta}$, when viewed as a measure on $[0,T] \times \Y$, admit the representation $\hat{\theta}(dy dt) = \hat{m}_t(dy) dt$, where $\hat{m}_t \in M_1(\Y)$ for almost all $t \in [0,T]$. For each $i \geq 1$ and for each $t \in [0,T]$, define the probability measure $\hat{m}_t^i$ on $\Y$ by
		\begin{align} \hat{m}_t^i(y) = \frac{\hat{m}_t(y)+1/i}{1+|\Y|/i}, y \in \Y, \label{eqn:mti-approx}
		\end{align}
and, for each $i \geq 1$, define the measure $\hat{\theta}^i(dy dt)$ on $[0,T] \times \my$ by $\hat{\theta}^i(dy dt) \coloneqq \hat{m}^i_t(dy)dt$.
Clearly, $\hat{\theta}^i \in \DY$ for all $i \geq 1$, and $\hat{\theta}^i \to \hat{\theta}$ in $\DY$ as $i \to \infty$. Since $(\hat{\mu},\hat{\theta}^i)$ satisfies the assumptions of Lemma~\ref{lemma:fix-initial-condition}, we have $\tilde{I}(\hat{\mu},\hat{\theta}^i) = I^*(\hat{\mu},\hat{\theta}^i)$.

Since, for each $t \in [0,T]$, the mapping
\begin{align*}
(g_t,m_t) \mapsto  \max\biggr\{- \int_{\Y} \biggr( L_{\hat{\mu}_t}g_t(\cdot)(y) + \int_{\EY}  \tau(Dg_t(y,\Delta)) \gamma_{y,y+d\Delta}(\hat{\mu}_t)\biggr) m_t(dy) , 0 \biggr\}
\end{align*}
on $(\mathbb{R}\cup\{+\infty, -\infty\})^{|\Y|} \times M_1(\Y)$ is bounded and continuous (thanks to assumption~\ref{assm:b2}), by an application of the Berge's maximum theorem, it follows that the mapping
\begin{align}
m_t \mapsto \sup_{g_t \in \mathbb{R}^{|\Y|}} - \int_{\Y} \biggr( L_{\hat{\mu}_t}g_t(\cdot)(y) + \int_{\EY}  \tau(Dg_t(y,\Delta)) \gamma_{y,y+d\Delta}(\hat{\mu}_t)\biggr) m_t(dy)
\label{eqn:mt-param-problem}
\end{align}
is continuous on $M_1(\Y)$. Similarly, for each $t \geq 0$, by assumption~\ref{assm:a2}, it follows that the mapping
\begin{align*}
(\alpha_t, m_t) \mapsto   \langle \alpha_t , \dot{\hat{\mu}}_t - \bar{\Lambda}^*_{\hat{\mu}_t,m_t} \rangle - \int_{\X \times \EX} \tau(D\alpha_t(x,\Delta)) \bar{\lambda}_{x.x+d\Delta}(\hat{\mu}_t, m_t) \hat{\mu}_t(dx) 
\end{align*}
is bounded and continuous on $\mathbb{R}^{|\X|} \times M_1(\Y)$. Again, by the Berge's maximum theorem,
\begin{align*}
m_t \mapsto  \sup_{\alpha_t \in \mathbb{R}^{|\X|}} \biggr\{ \langle \alpha_t , \dot{\hat{\mu}}_t - \bar{\Lambda}^*_{\hat{\mu}_t,m_t} \rangle - \int_{\X \times \EX} \tau(D\alpha_t(x,\Delta)) \bar{\lambda}_{x,x+d\Delta}(\hat{\mu}_t, m_t) \hat{\mu}_t(dx) \biggr\}
\end{align*}
is continuous on $M_1(\Y)$. Therefore, for each $t \in [0,T]$, we  see that
\begin{align*}
 \sup_{\alpha_t \in \mathbb{R}^{|\X|}} &  \biggr\{ \langle \alpha_t , \dot{\hat{\mu}}_t - \bar{\Lambda}^*_{\hat{\mu}_t,\hat{m}^i_t} \rangle - \int_{\X \times \EX} \tau(D\alpha_t(x,\Delta)) \bar{\lambda}_{x,x+d\Delta}(\hat{\mu}_t, \hat{m}^i_t) \hat{\mu}_t(dx) \biggr\} \\
 & \to  \sup_{\alpha_t \in \mathbb{R}^{|\X|}} \biggr\{ \langle \alpha_t , \dot{\hat{\mu}}_t - \bar{\Lambda}^*_{\hat{\mu}_t,\hat{m}_t} \rangle - \int_{\X \times \EX} \tau(D\alpha_t(x,\Delta)) \bar{\lambda}_{x,x+d\Delta}(\hat{\mu}_t, \hat{m}_t) \hat{\mu}_t(dx) \biggr\}, 
\end{align*}
and
\begin{align*}
 \sup_{g_t \in B(\Y)} & - \int_{\Y} \biggr( L_{\hat{\mu}_t}g_t(\cdot)(y) + \int_{\EY}  \tau(Dg_t(y,\Delta)) \gamma_{y,y+d\Delta}(\hat{\mu}_t)\biggr) \hat{m}^i_t(dy) \\
 & \to \sup_{g_t \in B(\Y)}  - \int_{\Y} \biggr( L_{\hat{\mu}_t}g_t(\cdot)(y) + \int_{\EY}  \tau(Dg_t(y,\Delta)) \gamma_{y,y+d\Delta}(\hat{\mu}_t)\biggr) \hat{m}_t(dy)
\end{align*}
as $i \to \infty$. Noting that
\begin{align*}
0 \leq \sup_{i \geq 1, t \in [0,T]}& \sup_{\alpha_t \in \mathbb{R}^{|\X|}} \biggr\{ \langle \alpha_t , \dot{\hat{\mu}}_t - \bar{\Lambda}^*_{\hat{\mu}_t,\hat{m}^i_t} \rangle  \\
& - \int_{\X \times \EX} \tau(D\alpha_t(x,\Delta)) \bar{\lambda}_{x,x+d\Delta}(\hat{\mu}_t, \hat{m}^i_t) \hat{\mu}_t(dx) \biggr\} < +\infty
\end{align*}
and
\begin{align*}
0 \leq  \sup_{i \geq 1, t \in [0,T]} &\sup_{g_t \in \mathbb{R}^{|\Y|}}  \biggr\{ - \int_{\Y} \biggr( L_{\hat{\mu}_t}g_t(\cdot)(y)  \\
& +  \int_{\EY}  \tau(Dg_t(y,\Delta)) \gamma_{y,y+d\Delta}(\hat{\mu}_t)\biggr) \hat{m}^i_t(dy) \biggr\} < +\infty,
\end{align*}
using the bounded convergence theorem, we obtain that $I^*(\hat{\mu},\hat{\theta}^i) \to I^*(\hat{\mu},\hat{\theta})$ as $i \to \infty$. Thanks to Remark~\ref{remark:itilde=istar}, this completes the proof of the lemma.
\end{proof}

We now extend the conclusion of the previous lemma to the case when the mapping $[0,T] \ni t \mapsto \mu_t \in \mx$ is not necessarily Lipschitz continuous.

\begin{lemma}
Let $\nu \in \mx$ and let $\tilde{I} : \DX \times \DY \to [0, + \infty]$ be a subsequential rate function such that $\tilde{I}(\mu, \theta) = +\infty$ unless $\mu_0 = \nu$. Suppose that $(\hat{\mu},\hat{\theta}) \in \DX \times \DY$ is such that $I^*(\hat{\mu},\hat{\theta}) < +\infty$, and $\inf_{t \in [\delta,T]} \min_{x \in \X} \hat{\mu}_t(x) > 0$ for all $\delta > 0$. Then $\tilde{I}(\hat{\mu}, \hat{\theta}) = I^*(\hat{\mu},\hat{\theta})$.
\label{lemma:fix-muhat-lipschitz}
\end{lemma}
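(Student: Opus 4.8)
The plan is to invoke the scheme of Remark~\ref{remark:itilde=istar}: we construct a sequence $(\hat\mu^i,\hat\theta^i)\to(\hat\mu,\hat\theta)$ in $\DX\times\DY$ with $\tilde I(\hat\mu^i,\hat\theta^i)=I^*(\hat\mu^i,\hat\theta^i)$ for every $i$ (via Lemma~\ref{lemma:fix-mhat}) and $I^*(\hat\mu^i,\hat\theta^i)\to I^*(\hat\mu,\hat\theta)$; then lower semicontinuity of $\tilde I$ together with $\tilde I\ge I^*$ forces $\tilde I(\hat\mu,\hat\theta)=I^*(\hat\mu,\hat\theta)$. By Lemma~\ref{lemma:istarfinite}, $I^*(\hat\mu,\hat\theta)<\infty$ makes $t\mapsto\hat\mu_t$ absolutely continuous (so $\dot{\hat\mu}\in L^1$) and $\hat\theta(dy\,dt)=\hat m_t(dy)\,dt$; hence the only hypothesis of Lemma~\ref{lemma:fix-mhat} that $\hat\mu$ may fail is global Lipschitz continuity, and it suffices to regularise $\hat\mu$ to a Lipschitz path.

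For the construction, first mollify $\hat m$ to a piecewise-constant $\hat m^i$ on a grid of mesh $\to 0$ and set $\hat\theta^i(dy\,dt)=\hat m^i_t(dy)\,dt$, so $\hat\theta^i\to\hat\theta$ in $\DY$ (this mimics the proof of Lemma~\ref{lemma:fix-mhat} and makes the Jensen step below clean). For $\hat\mu^i$: over a vanishing initial interval follow the construction in the proof of Lemma~\ref{lemma:fix-initial-condition} — run the zero-cost McKean--Vlasov flow $\dot\mu_t=\bar{\Lambda}^*_{\mu_t,\hat m^i_t}\mu_t$ from $\hat\mu^i_0=\hat\mu_0=\nu$, which by irreducibility~\ref{assm:a1} enters the interior of $\mx$ instantly, and then transport the (vanishing) discrepancy to $\hat\mu_{1/i}$ along edges of $(\X,\EX)$ at unit speed — and over the remaining interval take $\hat\mu^i$ to be the piecewise-linear interpolation, on a grid of mesh $\to 0$, of the time shift of $\hat\mu$ to that interval (which there takes values in a fixed compact subset of the interior of $\mx$, since $\inf_{t\in[1/i,T]}\min_x\hat\mu_t(x)>0$). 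Each piece has bounded velocity, so $t\mapsto\hat\mu^i_t$ is Lipschitz; $\hat\mu^i$ is continuous, $\hat\mu^i_0=\nu$, and strictly positive on $(0,T]$, whence $\inf_{t\in[\delta,T]}\min_x\hat\mu^i_t(x)>0$ for all $\delta>0$; and $I^*(\hat\mu^i,\hat\theta^i)<\infty$ because the flow segment costs $0$, the transport segment costs $o(1)$ by the $u\log u-u$ estimate of Lemma~\ref{lemma:fix-initial-condition}, and on the last segment $\hat\mu^i$ stays in the interior. Thus Lemma~\ref{lemma:fix-mhat} applies and $\tilde I(\hat\mu^i,\hat\theta^i)=I^*(\hat\mu^i,\hat\theta^i)$, while clearly $(\hat\mu^i,\hat\theta^i)\to(\hat\mu,\hat\theta)$ in $\DX\times\DY$.

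It remains to prove $I^*(\hat\mu^i,\hat\theta^i)\to I^*(\hat\mu,\hat\theta)$. Since $\tilde I\ge I^*$, $\tilde I(\hat\mu^i,\hat\theta^i)=I^*(\hat\mu^i,\hat\theta^i)$ and $\tilde I$ is lower semicontinuous, $\liminf_i I^*(\hat\mu^i,\hat\theta^i)\ge\tilde I(\hat\mu,\hat\theta)\ge I^*(\hat\mu,\hat\theta)$, so only $\limsup_i I^*(\hat\mu^i,\hat\theta^i)\le I^*(\hat\mu,\hat\theta)$ is needed. The map $(u,m)\mapsto\sup_{g\in B(\Y)}\int_\Y\bigl(-L_u g-\int_{\EY}\tau(Dg)\gamma_{y,y+d\Delta}(u)\bigr)m(dy)$ is, by Berge's maximum theorem and~\ref{assm:b2}, continuous and uniformly bounded, so $\hat\mu^i\to\hat\mu$, $\hat m^i\to\hat m$ a.e.\ and bounded convergence handle the fast part (exactly as in the proof of Lemma~\ref{lemma:fix-mhat}). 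For the slow part write $F^m_t(v,u)$ for the inner supremum over $\alpha$ in~(\ref{eqn:istar-supinside}) with $\hat m_t$ replaced by $m$; it is convex in $v$, and on compact subsets of the interior of $\mx$ it is Lipschitz in $(u,m)$ with constant $O(1+|v|)$. On the last segment the velocity of $\hat\mu^i$ on each grid cell is the cell-average of $\dot{\hat\mu}$, so Jensen's inequality in $v$, followed by replacing $(\hat\mu^i_t,\hat m^i_t)$ by $(\hat\mu_s,\hat m_s)$ inside $F$, costs per cell at most $O\!\bigl(\omega_{\hat\mu}(h)\int(1+|\dot{\hat\mu}_s|)\,ds\bigr)$ (with $\omega_{\hat\mu}$ the modulus of continuity of $\hat\mu$ and $h$ the mesh; the factor $|v|$ is absorbed because ``cell-velocity times cell-length $=$ cell-integral of $\dot{\hat\mu}$''), which sums to $o(1)$; combined with dominated convergence (dominant $C(1+|\dot{\hat\mu}|)\in L^1$, using continuity of $F^m_t$ in $m$ away from the boundary) this bounds the limsup of the slow part over the last segment by its value for $(\hat\mu,\hat\theta)$. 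The initial flow-and-transport segments contribute $o(1)$ to $I^*(\hat\mu^i,\hat\theta^i)$, and the corresponding contribution of $(\hat\mu,\hat\theta)$ tends to $0$ since it is the integral, over a shrinking neighbourhood of $\{t=0\}$, of the integrable slow density of $I^*(\hat\mu,\hat\theta)$. This gives $\limsup_i I^*(\hat\mu^i,\hat\theta^i)\le I^*(\hat\mu,\hat\theta)$, and the proof is complete.

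I expect the main obstacle to be this last ($\limsup$) step, and within it the slow part: one must show that regularising $\hat\mu$ — in particular near $t=0$, where $\hat\mu$ can approach the boundary of $\mx$ and need not be Lipschitz — does not inflate the rate function in the limit. Routing the start through the zero-cost flow from $\nu$ (which leaves the boundary instantly, by~\ref{assm:a1}) keeps the initial cost $o(1)$, and the identity ``cell-velocity of the interpolant $=$ cell-average of $\dot{\hat\mu}$'' together with the $O(1+|v|)$ Lipschitz bound on $F^m_t(v,\cdot)$ in the interior is what keeps the Jensen-step error summably small despite the velocities of $\hat\mu^i$ being unbounded.
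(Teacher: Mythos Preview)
Your overall strategy (Remark~\ref{remark:itilde=istar}, reduce to Lemma~\ref{lemma:fix-mhat}) is right, but the paper regularises $\hat\mu$ differently and more simply. It does \emph{not} touch $\hat\theta$ at all: Lemma~\ref{lemma:fix-mhat} puts no hypothesis on $\hat m$, so your piecewise-constant mollification of $\hat m$ is unnecessary (and your remark that it ``mimics the proof of Lemma~\ref{lemma:fix-mhat}'' is a misreading --- that lemma perturbs $\hat m$ into the interior of $M_1(\Y)$, not onto a grid). Instead of piecewise-linear interpolation plus Jensen, the paper \emph{truncates the velocity}: $\dot{\hat\mu}^i_t=\dot{\hat\mu}_t\,1_{\{\|\dot{\hat\mu}_t\|\le i\}}+\bar\Lambda^*_{\hat\mu^i_t,\hat m_t}\hat\mu^i_t\,1_{\{\|\dot{\hat\mu}_t\|>i\}}$. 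On the truncated set the slow cost density is identically zero (the $\alpha$-supremum is attained at $0$), and on the remainder only the $u$-argument differs; a direct comparison $Z^i_t\le Z_t+\text{error}$ via the first-order condition $\exp(D\hat\alpha^i_t)\le c_\eta(1+\|\dot{\hat\mu}_t\|)$ on $[\eta,T]$ yields an integrable dominant and hence dominated convergence. The non-locally-Lipschitz-at-$0$ case is then handled, as you also do, by the initial-segment construction of Lemma~\ref{lemma:fix-initial-condition}.

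Your interpolation route is plausible but the sketch has two real gaps. First, the slow Lagrangian $F(v,u,m)$ grows like $|v|\log|v|$, not $|v|$, so your claimed dominant $C(1+|\dot{\hat\mu}|)\in L^1$ is wrong; the correct dominant is of order $C(1+Z_t)$, whose integrability you must draw from $I^*(\hat\mu,\hat\theta)<\infty$ together with the fact that on $[\delta,T]$ the rates $\bar\lambda(u,m)u(x)$ are uniformly comparable over $m\in M_1(\Y)$. Second, after Jensen you are left with $\frac{1}{h}\iint_{\text{cell}^2}F(\dot{\hat\mu}_s,\hat\mu^i_t,\hat m^i_t)\,ds\,dt$, and replacing $\hat m^i_t$ by $\hat m_s$ incurs an error $\sim(1+|\dot{\hat\mu}_s|)\,|\hat m^i_t-\hat m_s|$; since $\hat m$ is only measurable and the weights $1+|v^i_k|$ are unbounded in $k$, this is \emph{not} covered by your $O(\omega_{\hat\mu}(h)\int(1+|\dot{\hat\mu}|))$ bound. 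A fix: do only the $u$-replacement after Jensen (so the double integral collapses to $\int_{\text{cell}}F(\dot{\hat\mu}_s,\hat\mu_s,m^i_k)\,ds$), sum over cells to $\int_0^T F(\dot{\hat\mu}_s,\hat\mu_s,\hat m^i_s)\,ds$, and only then let $\hat m^i\to\hat m$ a.e.\ by dominated convergence with dominant $\sup_{m}F(\dot{\hat\mu}_s,\hat\mu_s,m)\le C(1+Z_s)$. The paper's velocity-truncation avoids all of this.
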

\begin{proof}
Let us first suppose that the mapping $t \mapsto \hat{\mu}_t$ is locally Lipschitz continuous at $t=0$ so that $\sup_{t \in [0, \eta]}\|\dot{\hat{\mu}}_t\| < +\infty$ for some $\eta > 0$. Define a sequence of paths $\hat{\mu}^i$, $i\geq 1$, by $\hat{\mu}_0^i = \hat{\mu}_0$, and
\begin{align*}
\dot{\hat{\mu}}^i_t = \dot{\hat{\mu}}_t 1_{\{\|\dot{\hat{\mu}}_t\| \leq i\}} + \bar{\Lambda}^*_{\hat{\mu}^i_t,\hat{m}_t} \hat{\mu}^i_t 1_{\{\|\dot{\hat{\mu}}_t\| > i\}} , \, t \in [0,T].
\end{align*}
Since $I^*(\hat{\mu},\hat{\theta}) < +\infty$, by Lemma~\ref{lemma:istarfinite}, it follows that the mapping $t \mapsto \hat{\mu}_t$ is absolutely continuous and by the dominated convergence theorem one easily concludes that $\hat{\mu}^i_t \to \hat{\mu}_t$ as $i \to \infty$ uniformly in $t \in [0,T]$. Thus, by the assumption $\inf_{t \in [\delta,T]} \min_{x \in \X} \hat{\mu}_t(x) > 0$ for all $\delta > 0$, it follows that $\hat{\mu}^i \in \DX$ for all $i$ sufficiently large. Note that $(\hat{\mu}^i, \hat{\theta})$ satisfies the conditions of Lemma~\ref{lemma:fix-mhat} and hence $\tilde{I}(\hat{\mu}^i, \hat{\theta}) = I^*(\hat{\mu}^i, \hat{\theta})$ for all $i \geq 1$, that $\hat{\mu}^i \to \hat{\mu}$ in $\DX$ as $i \to \infty$, and that $\hat{\mu}^i_t = \hat{\mu}_t$ for all $t \in [0, \eta]$ for all sufficiently large $i$.

Let us now show that $I^*(\hat{\mu}^i, \hat{\theta}) \to I^*(\hat{\mu}^i, \hat{\theta}) $ as $i \to \infty$. By the arguments similar to those used in the proof of Lemma~\ref{lemma:fix-mhat}, using Berge's maximum theorem, for each $t \in [0,T]$, the mapping
\begin{align*}
u  \mapsto \sup_{g_t \in B(\Y)}-\int_\Y \left(L_ug_t(\cdot)(y) + \int_{\EY} \tau(Dg_t(y,\Delta)) \gamma_{y,y+d\Delta}(u) \right) \hat{m}_t(dy) 
\end{align*}
is continuous on $\mx$, and hence
\begin{align*}
\sup_{g_t \in B(\Y)} & -\int_\Y \left(L_{\hat{\mu}^i_t}g_t(\cdot)(y) + \int_{\EY} \tau(Dg_t(y,\Delta)) \gamma_{y,y+d\Delta}(\hat{\mu}^i_t) \right) \hat{m}_t(dy) \\
& \to \sup_{g_t \in B(\Y)}-\int_\Y \left(L_{\hat{\mu}_t}g_t(\cdot)(y) + \int_{\EY} \tau(Dg_t(y,\Delta)) \gamma_{y,y+d\Delta}(\hat{\mu}_t) \right) \hat{m}_t(dy) 
\end{align*}
as $i \to \infty$.
Therefore, by the bounded convergence theorem, we have
\begin{align*}
\int_{[0,T]}& \sup_{g_t \in B(\Y)} \left\{ -\int_\Y \left(L_{\hat{\mu}^i_t}g_t(\cdot)(y) + \int_{\EY} \tau(Dg_t(y,\Delta)) \gamma_{y,y+d\Delta}(\hat{\mu}^i_t) \right) \hat{m}_t(dy) \right\} dt \\
& \to \int_{[0,T]}\sup_{g_t \in B(\Y)} \left\{ -\int_\Y \left(L_{\hat{\mu}_t}g_t(\cdot)(y) + \int_{\EY} \tau(Dg_t(y,\Delta)) \gamma_{y,y+d\Delta}(\hat{\mu}_t) \right) \hat{m}_t(dy)\right\} dt
\end{align*}
as $i \to \infty$.

For the slow component, define
\begin{align*}
Z^i_t & \coloneqq  \sup_{\alpha_t \in \mathbb{R}^{|\X|}} \biggr\{ \langle \alpha_t, \dot{\hat{\mu}}^i_t  - \bar{\Lambda}^*_{\hat{\mu}^i_t, \hat{m}_t}\hat{\mu}^i_t \rangle\\
& \qquad -  \int_{\X \times \EX} \tau(D\alpha_t(x,\Delta))\bar{\lambda}_{x,x+d\Delta}(\hat{\mu}^i_t,\hat{m}_t) \hat{\mu}^i_t(dx) \biggr\}, \, t \in [0,T],
\end{align*}
and
\begin{align*}
Z_t & \coloneqq \sup_{\alpha_t \in \mathbb{R}^{|\X|}} \biggr\{ \langle \alpha_t, \dot{\hat{\mu}}_t - \bar{\Lambda}^*_{\hat{\mu}_t, \hat{m}_t}\hat{\mu}_t \rangle  \\
& \qquad - \int_{\X \times \EX} \tau(D\alpha_t(x,\Delta))\bar{\lambda}_{x,x+d\Delta}(\hat{\mu}_t,\hat{m}_t) \hat{\mu}_t(dx) \biggr\}, \, t \in [0,T].
\end{align*}
Since $I^*(\hat{\mu}, \hat{\theta}) < +\infty$ it follows that $Z_t < +\infty$ for almost all $t \in [0,T]$. Thanks to the assumption $\inf_{t \in [\delta,T]} \min_{x \in \X} \hat{\mu}_t(x) >0$ for all $\delta >0$, using the Berge's maximum theorem, for almost all $t \in [\eta,T]$, we see that the mapping
\begin{align*}
u \mapsto \sup_{\alpha_t \in \mathbb{R}^{|\X|}} \biggr\{ \langle \alpha_t, \dot{\hat{\mu}}_t  - \bar{\Lambda}^*_{u, \hat{m}_t}u \rangle -  \int_{\X \times \EX} \tau(D\alpha_t(x,\Delta))\bar{\lambda}_{x,x+d\Delta}(u,\hat{m}_t) u(dx) \biggr\}
\end{align*}
on $\mx$ is continuous at $\hat{\mu}_t$. Hence, noting that $Z^i_t = Z_t$ on $t \in [0,\eta]$ for all $i$ sufficiently large, for all $t \in [0,T] \cap \{s\in[0,T]:Z_s<+\infty\}$ we have $\dot{\hat{\mu}}^i_t = \dot{\hat{\mu}}_t$ for all $i$ sufficiently large, and $\hat{\mu}^i_t \to \hat{\mu}_t $ as $i \to \infty$ uniformly in $t \in [0,T]$, it follows that for all $t \in [0,T] \cap \{s \in [0,T]:Z_s < +\infty\}$ $Z^i_t \to Z_t$ as $i \to \infty$. Let us now show the convergence of the corresponding integrals. Fix $t \in (0,T]$ such that $Z_t < +\infty$ and let $\hat{\alpha}^i_t \in \mathbb{R}^{|\X|}$ and $\hat{\alpha}_t \in  \mathbb{R}^{|\X|} $ attain the supremum in the definition of $Z^i_t$ and $Z_t$ respectively.
Whenever $ \| \dot{\hat{\mu}}^i_t \| \leq i$, we have,
\begin{align}
0 \leq Z^i_t & = \langle \hat{\alpha}^i_t, \dot{\hat{\mu}}^i_t \rangle  - \sum_{(x, x^\prime) \in \EX} (\exp\{\hat{\alpha}^i_t(x^\prime) - \hat{\alpha}^i_t(x)\} -1) \bar{\lambda}_{x,x^\prime}(\hat{\mu}^i_t,\hat{m}_t) \hat{\mu}^i_t(x) \nonumber \\
& = \langle \hat{\alpha}^i_t, \dot{\hat{\mu}}_t \rangle  - \sum_{(x,x^\prime) \in \EX} (\exp\{\hat{\alpha}^i_t(x^\prime) - \hat{\alpha}^i_t(x)\} -1) \bar{\lambda}_{x,x^\prime}(\hat{\mu}_t,\hat{m}_t) \hat{\mu}_t(x) \nonumber \\
& \qquad -  \sum_{(x,x^\prime) \in \EX} (\exp\{\hat{\alpha}^i_t(x^\prime) - \hat{\alpha}^i_t(x)\} -1) \times (\bar{\lambda}_{x,x^\prime}(\hat{\mu}^i_t,\hat{m}_t) \hat{\mu}^i_t(x) - \bar{\lambda}_{x,x^\prime}(\hat{\mu}_t,\hat{m}_t) \hat{\mu}_t(x)) \nonumber \\
& \leq Z_t - \sum_{(x,x^\prime) \in \EX} (\exp\{\hat{\alpha}^i_t(x^\prime) - \hat{\alpha}^i_t(x)\} -1) \times (\bar{\lambda}_{x,x^\prime}(\hat{\mu}^i_t,\hat{m}_t) \hat{\mu}^i_t(x)- \bar{\lambda}_{x,x^\prime}(\hat{\mu}_t,\hat{m}_t) \hat{\mu}_t(x)).
\label{eqn:zt-domination}
\end{align}
Since $\hat{\mu}^i_t = \hat{\mu}_t$, $t \in [0, \eta]$, for all large enough $i$, the second term above vanishes whenever $t \in [0,\eta]$. Since $\hat{\mu}^i_t \to \hat{\mu}_t$ as $i \to \infty$ uniformly in $t \in [0,T]$, the first order optimality condition for $(\hat{\alpha}^i_t(x), x \in \X)$ (see~(\ref{eqn:alphahat-1})) implies that, for some constants $c_{\eta} > 0$, we have
\begin{align*}
\max_{(x,x^\prime) \in \EX} \exp\{\hat{\alpha}^i_t(x^\prime) - \hat{\alpha}^i_t(x)\} \leq c_{\eta} ( 1+\| \dot{\hat{\mu}}_t\|)
\end{align*}
whenever $t \in [\eta, T] \cap \{s \in [0,T]: Z_s<+\infty\}$. In particular, the right hand side of~(\ref{eqn:zt-domination}) is integrable. Hence, noting that $Z^i_t = 0$ in the alternative case when $\|\dot{\hat{\mu}}_t \| > i$, by an application of the dominated convergence theorem, we have that
\begin{align*}
\int_{[0,T]} \sup_{\alpha_t \in \mathbb{R}^{|\X|}} \biggr\{ \langle \alpha_t, \dot{\hat{\mu}}^i_t  - \bar{\Lambda}^*_{\hat{\mu}^i_t, \hat{m}_t}\hat{\mu}^i_t \rangle - \int_{\X \times \EX} \tau(D\alpha_t(x,\Delta))\bar{\lambda}_{x,x+d\Delta}(\hat{\mu}^i_t,\hat{m}_t) \hat{\mu}^i_t(dx) \biggr\} dt\
\end{align*}
converges to 
\begin{align*}
\int_{[0,T]} \sup_{\alpha_t \in \mathbb{R}^{|\X|}} \biggr\{ \langle \alpha_t, \dot{\hat{\mu}}_t  - \bar{\Lambda}^*_{\hat{\mu}_t, \hat{m}_t}\hat{\mu}_t \rangle - \int_{\X \times \EX} \tau(D\alpha_t(x,\Delta))\bar{\lambda}_{x,x+d\Delta}(\hat{\mu}_t,\hat{m}_t) \hat{\mu}_t(dx) \biggr\} dt\
\end{align*}
as $i \to \infty$. Hence, combining the convergences for the slow and the fast components, we have $I^*(\hat{\mu}^i, \hat{\theta}) \to I^*(\hat{\mu}, \hat{\theta})$ as $i \to \infty$. Further, by Remark~\ref{remark:itilde=istar}, it follows that $\tilde{I}(\hat{\mu}, \hat{\theta}) = I^*(\hat{\mu}, \hat{\theta})$.

In the general case when the mapping $t \mapsto \hat{\mu}_t$ is not locally Lipschitz continuous at $t = 0$, using arguments similar to those used in the proof of Lemma~\ref{lemma:fix-initial-condition}, one constructs a sequence $\hat{\tau}^i$, $i \geq 1$, and a sequence of elements $(\hat{\mu}^i,\hat{\theta}^i) \in \DX \times \DY$, $i \geq 1$, such that $\hat{\tau}^i \to 0$ as $i \to \infty$, $\sup_{t \in [0, \hat{\tau}^i]} \|\dot{\hat{\mu}}^i_t\| < +\infty$ (therefore the mapping $t \mapsto \hat{\mu}^i_t$ is locally Lipschitz continuous at $t=0$), $(\hat{\mu}^i,\hat{\theta}^i) \to (\hat{\mu},\hat{\theta})$ in $\DX \times \DY$ as $i \to \infty$, $\hat{\mu}^i_t = \hat{\mu}_{t+1/i-\hat{\tau}^i}$ and $\hat{m}^i_t = \hat{m}_{t+1/i-\hat{\tau}^i}$ for all $t \in [\hat{\tau}^i, T]$, and
\begin{align*}
\int_{[0, \hat{\tau}^i]\cup[T+1/i-\hat{\tau}^i,T]} \sup_{\alpha_t \in \mathbb{R}^{|\X|}} \biggr\{ \langle \alpha_t, \dot{\hat{\mu}}^i_t  - \bar{\Lambda}^*_{\hat{\mu}^i_t, \hat{m}^i_t}\hat{\mu}^i_t \rangle - \int_{\X \times \EX} \tau(D\alpha_t(x,\Delta))\bar{\lambda}_{x,x+d\Delta}(\hat{\mu}^i_t,\hat{m}^i_t) \hat{\mu}^i_t(dx) \biggr\} dt \\
\qquad + \int_{[0,\hat{\tau}^i]\cup[T+1/i-\hat{\tau}^i,T] } \sup_{g_t \in B(\Y)} \biggr\{ -\int_\Y \left(L_{\hat{\mu}^i_t}g_t(\cdot)(y) + \int_{\EY} \tau(Dg_t(y,\Delta)) \gamma_{y,y+d\Delta}(\hat{\mu}^i_t) \right) \hat{m}^i_t(dy) \biggr\} dt
\end{align*}
converges to $0$ as $i \to \infty$ (by using the small cost construction of constant velocity paths). Based on what we have already shown for paths that are locally Lipschitz continuous at $t = 0$, we see that $\tilde{I}(\hat{\mu}^i, \hat{\theta}^i) = I^*(\hat{\mu}^i, \hat{\theta}^i)$ for all $i \geq 1$. Again, using arguments similar to those used in the proof of Lemma~\ref{lemma:fix-initial-condition}, we conclude that $I^*(\hat{\mu}^i, \hat{\theta}^i) \to I^*(\hat{\mu}, \hat{\theta})$ as $i \to \infty$. Once again, by Remark~\ref{remark:itilde=istar}, we have $\tilde{I}(\hat{\mu},\hat{\theta}) = I^*(\hat{\mu},\hat{\theta})$. This completes the proof of the lemma.
\end{proof}

We finally show that $\tilde{I}(\mu, \theta) = I^*(\mu, \theta) $ for all $(\mu, \theta) \in \DX \times \DY$, by allowing the path $\mu$ to hit the boundary of $\mx$.
\begin{theorem}
Let $\nu \in \mx$ and let $\tilde{I} : \DX \times \DY \to [0, + \infty]$ be a subsequential rate function such that $\tilde{I}(\mu, \theta) = +\infty$ unless $\mu_0 = \nu$. Then, for all $(\hat{\mu},\hat{\theta}) \in \DX \times \DY$, we have $\tilde{I}(\hat{\mu},\hat{\theta}) = I^*(\hat{\mu}, \hat{\theta})$.
	\label{thm:istar=itilde}
	\end{theorem}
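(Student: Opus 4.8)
The plan is to reduce the general case to Lemma~\ref{lemma:fix-muhat-lipschitz} by approximating an arbitrary $(\hat{\mu},\hat{\theta})$ by paths that are bounded away from $\partial\mx$ on every interval $[\delta,T]$ and along which $I^*$ converges, and then to invoke the mechanism of Remark~\ref{remark:itilde=istar}. As in the preceding lemmas we may assume $\hat{\mu}_0=\nu$; moreover, if $I^*(\hat{\mu},\hat{\theta})=+\infty$ the assertion is immediate since $\tilde{I}\ge I^*$ on all of $\DX\times\DY$, so we assume $I^*(\hat{\mu},\hat{\theta})<+\infty$, and hence by Lemma~\ref{lemma:istarfinite} that $t\mapsto\hat{\mu}_t$ is absolutely continuous and $\hat{\theta}(dy\,dt)=\hat{m}_t(dy)\,dt$. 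First I would introduce the McKean--Vlasov flow $\mu^{\mathrm{ref}}$ driven by the fixed occupation measure $\hat{m}$, namely the solution of $\dot{\mu}^{\mathrm{ref}}_t=\bar{\Lambda}^*_{\mu^{\mathrm{ref}}_t,\hat{m}_t}\mu^{\mathrm{ref}}_t$ with $\mu^{\mathrm{ref}}_0=\nu$; this is well posed by~\ref{assm:a2}, and by~\ref{assm:a1}--\ref{assm:a2} the matrix $\bar{\Lambda}_{u,m}$ is, for every $u,m$, the generator of an irreducible chain with positive jump rates, so the (time-inhomogeneous) transition kernel along the flow has strictly positive entries for positive times and $\inf_{t\in[\delta,T]}\min_{x\in\X}\mu^{\mathrm{ref}}_t(x)>0$ for every $\delta>0$. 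I would then set $\hat{\mu}^i_t:=(1-1/i)\hat{\mu}_t+(1/i)\mu^{\mathrm{ref}}_t$ and $\hat{\theta}^i:=\hat{\theta}$. Then $\hat{\mu}^i_0=\nu$, the path $\hat{\mu}^i$ is absolutely continuous with $\dot{\hat{\mu}}^i_t=(1-1/i)\dot{\hat{\mu}}_t+(1/i)\bar{\Lambda}^*_{\mu^{\mathrm{ref}}_t,\hat{m}_t}\mu^{\mathrm{ref}}_t$, one has $\hat{\mu}^i\to\hat{\mu}$ uniformly on $[0,T]$ (hence in $\DX$), and $\inf_{t\in[\delta,T]}\min_{x\in\X}\hat{\mu}^i_t(x)\ge(1/i)\inf_{t\in[\delta,T]}\min_{x\in\X}\mu^{\mathrm{ref}}_t(x)>0$ for every $\delta>0$. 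Granting $I^*(\hat{\mu}^i,\hat{\theta})<+\infty$ and $I^*(\hat{\mu}^i,\hat{\theta})\to I^*(\hat{\mu},\hat{\theta})$, Lemma~\ref{lemma:fix-muhat-lipschitz} applies to each $(\hat{\mu}^i,\hat{\theta})$ and gives $\tilde{I}(\hat{\mu}^i,\hat{\theta})=I^*(\hat{\mu}^i,\hat{\theta})$; lower semicontinuity of $\tilde{I}$ then yields $\tilde{I}(\hat{\mu},\hat{\theta})\le\liminf_i\tilde{I}(\hat{\mu}^i,\hat{\theta})=\lim_i I^*(\hat{\mu}^i,\hat{\theta})=I^*(\hat{\mu},\hat{\theta})$, and $\tilde{I}\ge I^*$ completes the proof.

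The substantive step, and the main obstacle, is the convergence $I^*(\hat{\mu}^i,\hat{\theta})\to I^*(\hat{\mu},\hat{\theta})$ together with the finiteness of $I^*(\hat{\mu}^i,\hat{\theta})$. By~(\ref{eqn:istar-supinside}), $I^*$ is the integral of a slow part and a fast part. For the fast part, the integrand $u\mapsto\sup_{g\in B(\Y)}\int_\Y(-L_ug(y)-\int_{\EY}\tau(Dg(y,\Delta))\gamma_{y,y+d\Delta}(u))\hat{m}_t(dy)$ is bounded and, by~\ref{assm:b2} and Berge's maximum theorem (exactly as used in Lemma~\ref{lemma:fix-mhat}), continuous in $u$; since $\hat{\mu}^i_t\to\hat{\mu}_t$ uniformly, bounded convergence gives the convergence of the fast parts. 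For the slow part, I would rewrite the inner supremum over $\alpha$ as $\mathcal{L}(t,u,v):=\sup_\alpha\{\langle\alpha,v\rangle-\sum_{(x,x')\in\EX}(e^{\alpha(x')-\alpha(x)}-1)\beta_{x,x'}(u)\}$, where $\beta_{x,x'}(u):=\bar{\lambda}_{x,x'}(u,\hat{m}_t)u(x)\ge0$ is the reference flux; for each fixed $\alpha$ this expression is affine in the pair $(v,\beta)$, hence $\mathcal{L}$ is jointly convex and lower semicontinuous in $(v,\beta)$. Two structural facts drive the estimate: $\mathcal{L}(t,\mu^{\mathrm{ref}}_t,\dot{\mu}^{\mathrm{ref}}_t)=0$, because the choice $v=\bar{\Lambda}^*_{\mu^{\mathrm{ref}}_t,\hat{m}_t}\mu^{\mathrm{ref}}_t$ reduces the objective to $-\int\tau(D\alpha)\,\bar{\lambda}(\mu^{\mathrm{ref}}_t,\hat{m}_t)\mu^{\mathrm{ref}}_t\le0$, maximised at $\alpha=0$; and $\int_0^T\mathcal{L}(t,\hat{\mu}_t,\dot{\hat{\mu}}_t)\,dt$ is the finite slow part of $I^*(\hat{\mu},\hat{\theta})$. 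Since $\dot{\hat{\mu}}^i_t$ is the convex combination of $\dot{\hat{\mu}}_t$ and $\dot{\mu}^{\mathrm{ref}}_t$, and since~\ref{assm:a2} gives $\|\beta(\hat{\mu}^i_t)-((1-1/i)\beta(\hat{\mu}_t)+(1/i)\beta(\mu^{\mathrm{ref}}_t))\|_\infty\le C/i$ uniformly in $t$, joint convexity of $\mathcal{L}$ combined with an envelope bound on the dependence of $\mathcal{L}$ on its reference flux should yield a pointwise estimate $\mathcal{L}(t,\hat{\mu}^i_t,\dot{\hat{\mu}}^i_t)\le(1-1/i)\mathcal{L}(t,\hat{\mu}_t,\dot{\hat{\mu}}_t)+r_i(t)$ with $\int_0^T r_i(t)\,dt\to0$ and with the right-hand side dominated, uniformly in $i$, by an integrable function built from $\mathcal{L}(t,\hat{\mu}_t,\dot{\hat{\mu}}_t)$. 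Dominated convergence then gives $\limsup_i\int_0^T\mathcal{L}(t,\hat{\mu}^i_t,\dot{\hat{\mu}}^i_t)\,dt\le\int_0^T\mathcal{L}(t,\hat{\mu}_t,\dot{\hat{\mu}}_t)\,dt$ (in particular $I^*(\hat{\mu}^i,\hat{\theta})<+\infty$), while lower semicontinuity of $I^*$ gives the matching $\liminf$.

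The hard part is precisely the envelope/domination estimate for the slow part: because the reference flux $\beta(u)$ is not linear in $u$ and $\mathcal{L}(t,u,\cdot)$ degenerates (blows up) as the flux vanishes near $\partial\mx$, one cannot simply invoke convexity of $I^*$ in $(\mu,\theta)$; the $O(1/i)$ perturbation of the reference flux created by convex-combining with $\mu^{\mathrm{ref}}$ must be absorbed using the superlinear growth of $\tau^*$ and the finiteness of $I^*(\hat{\mu},\hat{\theta})$, in the same spirit as the estimates carried out in the proofs of Lemmas~\ref{lemma:fix-initial-condition}--\ref{lemma:fix-muhat-lipschitz}. Once this is in hand, the conclusion $\tilde{I}(\hat{\mu},\hat{\theta})=I^*(\hat{\mu},\hat{\theta})$ follows by the mechanism of Remark~\ref{remark:itilde=istar} as described in the first paragraph, for all $(\hat{\mu},\hat{\theta})\in\DX\times\DY$.
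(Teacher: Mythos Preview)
Your overall strategy coincides with the paper's: reduce to Lemma~\ref{lemma:fix-muhat-lipschitz} by constructing interior approximants $(\hat{\mu}^i,\hat{\theta})$, verify $I^*(\hat{\mu}^i,\hat{\theta})\to I^*(\hat{\mu},\hat{\theta})$, and conclude via Remark~\ref{remark:itilde=istar}. The fast-part argument and the use of lower semicontinuity of $I^*$ for the $\liminf$ direction are both fine. The difference lies in the specific construction: you take the convex combination $\hat{\mu}^i_t=(1-1/i)\hat{\mu}_t+(1/i)\mu^{\mathrm{ref}}_t$ with the McKean--Vlasov reference flow, whereas the paper uses the additive perturbation $\hat{\mu}^i_t(x)=(\hat{\mu}_{t+1/i-\hat{\tau}^i}(x)+1/i)/(1+|\X|/i)$ on $(\hat{\tau}^i,T]$, preceded by a short piecewise-constant-velocity bridge on $[0,\hat{\tau}^i]$.

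The gap you flag is real, and your construction makes it harder to close than the paper's. The issue is exactly the nonlinearity of $\beta_{x,x'}(u)=\bar{\lambda}_{x,x'}(u,\hat{m}_t)u(x)$: the convex-combined flux $\tilde{\beta}^i=(1-1/i)\beta(\hat{\mu}_t)+(1/i)\beta(\mu^{\mathrm{ref}}_t)$ differs from $\beta(\hat{\mu}^i_t)$ by a term that, near the boundary (where $\hat{\mu}_t(x)\ll\mu^{\mathrm{ref}}_t(x)$), is of the same order as $\tilde{\beta}^i$ itself. Concretely, in that regime $\beta^i_{x,x'}/\tilde{\beta}^i_{x,x'}\to\bar{\lambda}_{x,x'}(\hat{\mu}_t,\hat{m}_t)/\bar{\lambda}_{x,x'}(\mu^{\mathrm{ref}}_t,\hat{m}_t)$, which is bounded away from~$1$, so the ``$O(1/i)$ perturbation of the reference flux'' you appeal to is not small in the relevant multiplicative sense, and the envelope bound you would need does not follow from convexity alone. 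The paper's additive construction is engineered precisely to avoid this: writing $Z^i_t\le (1+|\X|/i)^{-1}Z_t-\sum(e^{D\hat{\alpha}^i_t}-1)(-c_L\hat{\mu}_t(x)/i+c/i)$, one gets a built-in sign dichotomy---when $\hat{\mu}_t(x)<c/c_L$ the extra term is \emph{nonpositive} and can be dropped, and when $\hat{\mu}_t(x)\ge c/c_L$ the first-order condition bounds $e^{D\hat{\alpha}^i_t}$ by $c_2(1+\|\dot{\hat{\mu}}_t\|)$, giving an integrable dominator. That sign trick is what closes the $\limsup$ estimate; it has no analogue for the convex combination, so your proposal, while structurally sound, leaves the decisive estimate genuinely open.
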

\begin{proof}
Since $\tilde{I}(\mu, \theta) \geq I^*(\mu, \theta)$ for all $(\mu, \theta) \in \DX \times \DY$, it suffices to focus on a $(\hat{\mu},\hat{\theta}) \in \DX \times \DY$ such that $I^*(\hat{\mu}, \hat{\theta}) < +\infty$ and $\hat{\mu}_0 = \nu$. By Lemma~\ref{lemma:istarfinite}, we have that the mapping $[0,T] \ni t \mapsto \hat{\mu}_t \in \mx$ is absolutely continuous. In particular, $\dot{\hat{\mu}}_t$ exists for almost all $t \in [0,T]$ and $\hat{\mu}_t = \nu + \int_{[0,t]} \dot{\hat{\mu}}_s ds$ for all $t \in [0,T]$.

We shall construct a sequence of paths $\hat{\mu}^i \in \DX$, $i \geq 1$, such that $\hat{\mu}^i \to \hat{\mu}$ in $\DX$ as $i \to \infty$, $\tilde{I}(\hat{\mu}^i, \hat{\theta}) = I^*(\hat{\mu}, \hat{\theta})$ for all $i \geq 1$, and $I^*(\hat{\mu}^i, \hat{\theta}) \to I^*(\hat{\mu}, \hat{\theta})$ as $i \to \infty$.

Let $\varepsilon_i(x) = \frac{\hat{\mu}_{1/i}(x)+1/i}{1+|\X|/i}$, $x \in \X$ and $i \geq 1$.  Using arguments similar to those used in the proof of Lemma~\ref{lemma:fix-initial-condition}, we first construct a sequence of times $\hat{\tau}^i$, $i \geq 1$, and a sequence of piecewise constant velocity trajectories $\hat{\mu}^i_t$, $t \in [0,\hat{\tau}^i]$, with the property that $\hat{\mu}^i_0 = \hat{\mu}_0$ for all $i\geq 1$,  $\hat{\mu}^i_{\hat{\tau}^i}(x) = \varepsilon_i(x)$ for all $x \in \X$ and $i \geq 1$,  $\hat{\tau}^i \to 0$ as $i \to \infty$, and
\begin{align}
\int_{[0,\hat{\tau}^i]}  \sup_{\alpha_t \in \mathbb{R}^{|\X|}} \biggr\{ \langle \alpha_t, \dot{\hat{\mu}}^i_t  - \bar{\Lambda}^*_{\hat{\mu}^i_t, \hat{m}_t}\hat{\mu}^i_t \rangle - \int_{\X \times \EX} \tau(D\alpha_t(x,\Delta))\bar{\lambda}_{x,x+d\Delta}(\hat{\mu}^i_t,\hat{m}_t) \hat{\mu}^i_t(dx) \biggr\} dt \to 0
\label{eqn:conv-over-taui}
\end{align}
as $i \to \infty$. We then define the path $\hat{\mu}^i_t$ on $t \in (\hat{\tau}^i, T]$ by
\begin{align*}
\hat{\mu}^i_t(x)=  \frac{\hat{\mu}_{t+1/i-\hat{\tau}^i}(x) + 1/i}{1+|\X|/i}, x \in \X. 
\end{align*}
Clearly, $\hat{\mu}^i_t \to \hat{\mu}_t$ as $i \to \infty$ uniformly in $t \in [0,T]$ and hence $\hat{\mu}^i \to \hat{\mu}$ in $\DX$ as $i \to \infty$. Note that $(\hat{\mu}^i, \hat{\theta})$ satisfies the conditions of Lemma~\ref{lemma:fix-muhat-lipschitz} and hence we have $\tilde{I}(\hat{\mu}^i, \hat{\theta}) = I^*(\hat{\mu}^i, \hat{\theta})$ for all $i \geq 1$. 

We now show that $I^*(\hat{\mu}^i,\hat{\theta}) \to I^*(\hat{\mu},\hat{\theta})$ as $i \to \infty$. Using arguments similar to those used in the proof of Lemma~\ref{lemma:fix-muhat-lipschitz}, it is easy to show that
\begin{align}
\int_{[0,T]}& \sup_{g_t \in B(\Y)} \left\{ -\int_\Y \left(L_{\hat{\mu}^i_t}g_t(\cdot)(y) + \int_{\EY} \tau(Dg_t(y,\Delta)) \gamma_{y,y+d\Delta}(\hat{\mu}^i_t) \right) \hat{m}_t(dy) \right\} dt \nonumber \\
& \to \int_{[0,T]}\sup_{g_t \in B(\Y)} \left\{ -\int_\Y \left(L_{\hat{\mu}_t}g_t(\cdot)(y) + \int_{\EY} \tau(Dg_t(y,\Delta)) \gamma_{y,y+d\Delta}(\hat{\mu}_t) \right) \hat{m}_t(dy)\right\} dt
\label{eqn:conv-mhat-part}
\end{align}
as $i \to \infty$.

To show convergence of the integral corresponding to the slow process, define
\begin{align*}
Z^i_t & \coloneqq  \sup_{\alpha_t \in \mathbb{R}^{|\X|}} \biggr\{ \langle \alpha_t, \dot{\hat{\mu}}^i_{t-1/i+\hat{\tau}^i}  - \bar{\Lambda}^*_{\hat{\mu}^i_{t-1/i+\hat{\tau}^i}, \hat{m}_t}\hat{\mu}^i_{t-1/i+\hat{\tau}^i} \rangle\\
& \qquad -  \int_{\X \times \EX} \tau(D\alpha_t(x,\Delta))\bar{\lambda}_{x,x+d\Delta}(\hat{\mu}^i_{t-1/i+\hat{\tau}^i},\hat{m}_t) \hat{\mu}^i_{t-1/i+\hat{\tau}^i}(dx) \biggr\}, \, t \in [1/i,T+1/i-\hat{\tau}^i],
\end{align*}
and
\begin{align*}
Z_t & \coloneqq  \sup_{\alpha_t \in \mathbb{R}^{|\X|}} \biggr\{ \langle \alpha_t, \dot{\hat{\mu}}_t - \bar{\Lambda}^*_{\hat{\mu}_t, \hat{m}_t}\hat{\mu}_t \rangle  \\
& \qquad - \int_{\X \times \EX} \tau(D\alpha_t(x,\Delta))\bar{\lambda}_{x,x+d\Delta}(\hat{\mu}_t,\hat{m}_t) \hat{\mu}_t(dx) \biggr\}, \, t \in [0,T].
\end{align*}
Note the shift in the time index in the definition of $Z_t^i$ to enable direct comparison between $Z_t$ and $Z_t^i$. For $t \in [1/i,T]$, we then have
\begin{align*}
Z^i_t = & \frac{1}{1+|\X|/i}  \sup_{\alpha_t \in \mathbb{R}^{|\X|}}\biggr\{ \langle \alpha_t, \dot{\hat{\mu}}_t \rangle - \sum_{(x,x^\prime) \in \EX}  (\exp\{\alpha_t(x^\prime)- \alpha_t(x)\} -1)\bar{\lambda}_{x,x^\prime}(\hat{\mu}^i_{t-1/i+\hat{\tau}^i},\hat{m}_t) (\hat{\mu}_t(x)+1/i) \biggr\}.
\end{align*}
The objective function above can be simplified as
\begin{align*}
 & \langle \alpha_t, \dot{\hat{\mu}}_t \rangle - \sum_{(x,x^\prime) \in \EX}  \exp\{\alpha_t(x^\prime)- \alpha_t(x)\} \bar{\lambda}_{x,x^\prime}(\hat{\mu}^i_{t-1/i+\hat{\tau}^i},\hat{m}_t) (\hat{\mu}_t(x)+1/i)  \\
& =  \langle \alpha_t, \dot{\hat{\mu}}_t \rangle - \sum_{(x,x^\prime) \in \EX}  \exp\{\alpha_t(x^\prime)- \alpha_t(x)\}  \bar{\lambda}_{x,x^\prime}(\hat{\mu}_t,\hat{m}_t) \hat{\mu}_t(x)  \\
& \qquad - \sum_{(x,x^\prime) \in \EX}  \exp\{\alpha_t(x^\prime)- \alpha_t(x)\} \biggr[ (\bar{\lambda}_{x,x^\prime}(\hat{\mu}^i_{t-1/i+\hat{\tau}^i},\hat{m}_t) - \bar{\lambda}_{x,x^\prime}(\hat{\mu}_t,\hat{m}_t)) \hat{\mu}_t(x) + \frac{\bar{\lambda}_{x,x^\prime}(\hat{\mu}^i_{t-1/i+\hat{\tau}^i}, \hat{m}_t)}{i} \biggr]\\
& \leq \langle \alpha_t, \dot{\hat{\mu}}_t \rangle - \sum_{(x,x^\prime) \in \EX}  \exp\{\alpha_t(x^\prime)- \alpha_t(x)\}  \bar{\lambda}_{x,x^\prime}(\hat{\mu}_t,\hat{m}_t) \hat{\mu}_t(x)  \\
& \qquad   - \sum_{(x,x^\prime) \in \EX}  \exp\{\alpha_t(x^\prime)- \alpha_t(x)\} \biggr(- \frac{c_L\hat{\mu}_t(x)}{i} + \frac{c}{i} \biggr)
\end{align*}
where the last inequality follows from assumption~\ref{assm:a2}; here $c = \min_{(x,x^\prime) \in \X} \min_{y \in \Y}  \min_{\xi \in \mx} \lambda_{x,x^\prime}(\xi, y)$ and $c_L = \max_{(x,x^\prime) \in \EX} \max_{y \in \Y} c_L^{x,x^\prime,y}$ where $c_L^{x,x^\prime, y}$ is the Lipschitz constant of $\lambda_{x,x^\prime}(\cdot, y), (x,x^\prime) \in \EX, y \in \Y$. Fix $t \in [1/i, T+1/i-\hat{\tau}^i]$ with $Z_t < +\infty$ and let $(\hat{\alpha}^i_t(x), x \in \X) \in \mathbb{R}^{|\X|}$ denote the optimiser in the definition of $Z^i_t$. Then the above computation gives us
\begin{align*}
Z^i_t & \leq \frac{1}{1+|\X|/i} \left\{ \langle \hat{\alpha}^i_t, \dot{\hat{\mu}}_t \rangle - \sum_{(x,x^\prime) \in \EX}  \exp\{\hat{\alpha}^i_t(x^\prime)- \hat{\alpha}^i_t(x)\}  \bar{\lambda}_{x,x^\prime}(\hat{\mu}_t,\hat{m}_t) \hat{\mu}_t(x)  \right\}\\
& \qquad - \frac{1}{1+|\X|/i} \left\{ \sum_{(x,x^\prime) \in \EX}  \exp\{\hat{\alpha}^i_t(x^\prime)- \hat{\alpha}^i_t(x)\}\biggr(- \frac{c_L\hat{\mu}_t(x)}{i} + \frac{c}{i} \biggr) \right\} \\
& \leq \frac{1}{1+|\X|/i} Z_t - \frac{1}{1+|\X|/i} \left\{ \sum_{(x,x^\prime) \in \EX}  \exp\{\hat{\alpha}^i_t(x^\prime)- \hat{\alpha}^i_t(x)\} \biggr(- \frac{c_L\hat{\mu}_t(x)}{i} + \frac{c}{i} \biggr) \right\}.
\end{align*}
If $\hat{\mu}_t(x) < c/c_L$ for some $x \in \X$, we see that all the terms in the summation corresponding to the edges $(x,x^\prime) \in \EX$ are negative. On the other hand, if $\hat{\mu}_t(x) > c/c_L$, noting that $\hat{\tau}^i \to 0$ as $i \to \infty$ and the convergence of $\hat{\mu}^i_t$  to $\hat{\mu}_t$ as $i \to \infty$  uniformly in $t \in [0,T]$, the first order optimality condition for $(\hat{\alpha}^i_t(x), x \in \X)$ implies that, for some constant $c_2 > 0$, 
\begin{align*}
\max_{x^\prime \in \X : (x,x^\prime) \in \EX}\exp\{\hat{\alpha}^i_t(x^\prime) - \hat{\alpha}^i_t(x)\} \leq c_2 (1+\|\dot{\hat{\mu}}_t\|),
\end{align*}
and hence for all $t \in [1/i, T+1/i-\hat{\tau}^i]$ with $Z_t < +\infty$, we obtain that
\begin{align*}
Z^i_t \leq \frac{1}{1+|\X|/i} \{Z_t + c_2 |\EX| (1+\| \dot{\hat{\mu_t}}\|) \}.
\end{align*}
Hence by the dominated convergence theorem, we see that
\begin{align*}
\int_{[0,T]}\sup_{\alpha_t \in \mathbb{R}^{|\X|}} & \biggr\{ \langle \alpha_t, \dot{\hat{\mu}}^i_t  - \bar{\Lambda}^*_{\hat{\mu}^i_t, \hat{m}_t}\hat{\mu}^i_t \rangle-  \int_{\X \times \EX} \tau(D\alpha_t(x,\Delta))\bar{\lambda}_{x,x+d\Delta}(\hat{\mu}^i_t,\hat{m}_t) \hat{\mu}^i_t(dx) \biggr\} \times 1_{\{t \geq \hat{\tau}^i\}} dt 
\end{align*}
converges to
\begin{align*}
\int_{[0,T]} \sup_{\alpha_t \in \mathbb{R}^{|\X|}} \biggr\{ \langle \alpha_t, \dot{\hat{\mu}}_t - \bar{\Lambda}^*_{\hat{\mu}_t, \hat{m}_t}\hat{\mu}_t \rangle  - \int_{\X \times \EX} \tau(D\alpha_t(x,\Delta))\bar{\lambda}_{x,x+d\Delta}(\hat{\mu}_t,\hat{m}_t) \hat{\mu}_t(dx) \biggr\}dt
\end{align*}
as $i \to \infty$. This along with the convergences~(\ref{eqn:conv-over-taui}) and~(\ref{eqn:conv-mhat-part}) implies that $I^*(\hat{\mu}^i,\hat{\theta})\to I^*(\hat{\mu}, \hat{\theta})$ as $i \to \infty$. The procedure of Remark~\ref{remark:itilde=istar} then completes the proof of the theorem.
\end{proof}
	
\section{Completing the Proof of Theorem~\ref{thm:main-finite-duration}}
\label{section:complete-proof}
	We finally complete the proof of Theorem~\ref{thm:main-finite-duration} by extending the conclusion of Theorem~\ref{thm:istar=itilde} to all subsequential rate functions $\tilde{I}$, i.e. we remove the  restriction that, for some $\nu \in \mx$, $\tilde{I}(\mu, \theta) = +\infty$ unless $\mu_0 = \nu$.

	\begin{proof}[Proof of Theorem~\ref{thm:main-finite-duration}]
Fix $\nu \in \mx$ and suppose that $\{\mu_N\}$ is such that $\limsup_{N \to \infty}\frac{1}{N}\log \Prob(|\mu_N(0)-\nu| \geq \varepsilon) = -\infty$ for each $\varepsilon > 0$. By Theorem~\ref{thm:exp-tightness}, the family $\{(\mu_N,\theta_N)\}_{N \geq 1}$ is exponentially tight in $\DX \times \DY$. Therefore, there exists a subsequence $\{N_k\}_{k \geq 1}$ of $\mathbb{N}$ such that $\{(\mu_{N_k},\theta_{N_k})\}_{k \geq 1}$ satisfies the LDP with  rate function $\tilde{I}$ (see, for example, Dembo and Zeitouni~\cite[Lemma~4.1.23]{dembo-zeitouni}); by the above condition on the family $\{\mu_N\}$ and by the contraction principle, we see that $\tilde{I}(\mu, \theta) = +\infty$ unless $\mu_0 =\nu$. Therefore, by Theorem~\ref{thm:istar=itilde}, $\tilde{I} = I^*$ on $\DX \times \DY$. Hence $\tilde{I}$ is uniquely determined for all such subsequences, and it follows that the family $\{(\mu_N,\theta_N)\}_{N \geq 1}$ satisfies the LDP with rate function $I^*$ (see, for example, Dembo and Zeitouni~\cite[Exercise~4.4.15 (b)]{dembo-zeitouni}) defined as follows: $I^*(\mu, \theta)$ is defined by~(\ref{eqn:istar}) whenever $\mu$ is such that $\mu(0) = \nu$, and $I^*(\mu,\theta) = + \infty$ otherwise.

In the general case when $\{\mu_N(0)\}$ satisfies the LDP on $\mx$ with rate function $I_0$, let $p^{(N)}_{\nu_N}$ denote the regular conditional distribution of $(\mu_N, \theta_N)$ on $\DX \times \DY$ given $\mu_N(0) = \nu_N \in M_1^N(\X)$. By the above argument, whenever $\nu_N \to \nu$ in $\mx$, $p^{(N)}_{\nu_N}$ satisfies the LDP on $\DX \times \DY$ with rate function $I^*(\mu, \theta)+ \infty1_{\mu(0) \neq \nu}$. Therefore, the family $\{(\mu_N, \theta_N)\}_{N \geq 1}$ satisfies the LDP on $\DX \times \DY $ with rate function $I_0(\mu(0))+ I^*(\mu, \theta)$ (see, for example, Chaganty~\cite{chaganty-97}). This completes the proof of Theorem~\ref{thm:main-finite-duration}.
\end{proof}

\appendix

\section{Examples of two time scale mean-field models}
\label{appendix:examples}
We describe two applications that can be studied using our two time scale mean-field model --  a retrial system with orbit queues and a wireless local area network (WLAN) with local interaction.

\emph{Example 1}. We first describe a retrial system with orbit queues (see Figure~\ref{fig:retrial}). Such systems have been used to model multiple competing jobs in a carrier sense multiple access network (see Avrachenkov et al.~\cite{avrachenkov-etal-14} and the references therein). In this model, there is a single exponential server with service rate $N$, $N$ statistically identical Poisson arrival streams (of rate $\lambda$) and $N$ orbit queues of identical (finite) size $K$, one corresponding to each arrival stream. Whenever an arriving customer finds an empty server, it occupies the server and spends a random amount of time, exponentially distributed with mean $1/N$, and then leaves the system. If the arriving customer sees a busy server, it waits in the orbit queue corresponding to that arrival stream, if the queue is not full. Whenever an orbit queue is nonempty and the server is free, the head of the line customer in that orbit queue attempts for service at a fixed positive rate $\alpha$. In this setting, the state of the server (i.e. idle/busy) represents the environment, and the number of waiting customers in an orbit queue represents the state of that node. Note that the state of each orbit queue evolves slowly (i.e. $O(1)$ many transitions in a given $O(1)$ duration of time). But since there are $N$ orbit queues and each nonempty queue attempts for service with a fixed positive rate, the environment makes $O(N)$ many transitions in a given $O(1)$ duration of time. Also, the transition rates of the number of customers in a queue depend on the state of the server and the transition rates of the environment depend on the fraction of non-empty orbit queues. Figure~\ref{fig:transition_rates_node} depicts the transition rates of each orbit queue when the server state is $y$ ($y = 0$ indicates idle state and $y=1$ indicates busy state), and Figure~\ref{fig:transition_rates_server} depicts the transition rates of the server when the empirical measure of the states of all the orbit queues is $\xi$. Clearly, this system falls within the framework of our two time scale mean-field model.

\setlength{\unitlength}{1cm}
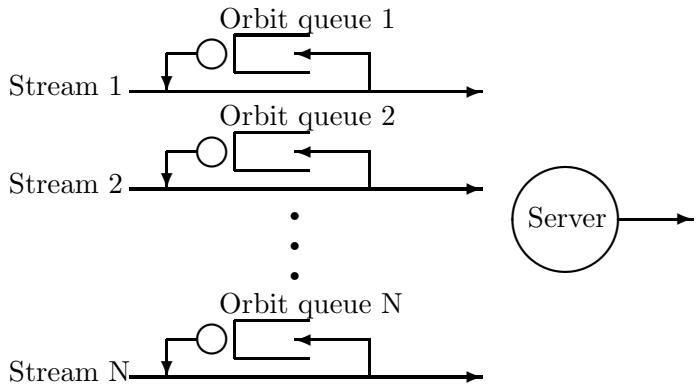
\begin{figure}[h!]
\begin{picture}(7,7)(-4,1)
\thicklines
\put(6.6,4.1){\circle{5}}
\put(6.1,4){Server}
\put(7.3,4.1){\vector(1,0){1}}

\put(0.8,5.8){\vector(1,0){4.7}}
\put(4,6.3){\vector(-1,0){1}}
\put(4,5.8){\line(0,1){0.5}}
\put(3.2,6.55){\line(-1,0){1}}
\put(3.2,6.05){\line(-1,0){1}}
\put(2.2,6.05){\line(0,1){0.5}}
\put(1.9,6.3){\circle{0.4}}
\put(1.7,6.3){\line(-1,0){0.4}}
\put(1.3,6.3){\vector(0,-1){0.5}}

\put(0.8,4.5){\vector(1,0){4.7}}
\put(4,5){\vector(-1,0){1}}
\put(4,4.5){\line(0,1){0.5}}
\put(3.2,5.25){\line(-1,0){1}}
\put(3.2,4.75){\line(-1,0){1}}
\put(2.2,4.75){\line(0,1){0.5}}
\put(1.9,5){\circle{0.4}}
\put(1.7,5){\line(-1,0){0.4}}
\put(1.3,5){\vector(0,-1){0.5}}

\put(0.8,2){\vector(1,0){4.7}}
\put(4,2.5){\vector(-1,0){1}}
\put(4,2){\line(0,1){0.5}}
\put(3.2,2.75){\line(-1,0){1}}
\put(3.2,2.25){\line(-1,0){1}}
\put(2.2,2.25){\line(0,1){0.5}}
\put(1.9,2.5){\circle{0.4}}
\put(1.7,2.5){\line(-1,0){0.4}}
\put(1.3,2.5){\vector(0,-1){0.5}}

\put(-0.8,5.75){Stream 1}
\put(-0.8,4.45){Stream 2}
\put(-0.8,1.95){Stream N}

\put(2,6.65){Orbit queue 1}
\put(2,5.35){Orbit queue 2}
\put(2,2.85){Orbit queue N}

\put(3,4.15){\circle*{0.1}}
\put(3,3.75){\circle*{0.1}}
\put(3,3.35){\circle*{0.1}}
\end{picture}
\caption{A retrial system with $N$ orbit queues}
\label{fig:retrial}
\end{figure}

\setlength{\unitlength}{1mm}
\begin{figure}[h]
\centering
\begin{picture}(50,30)(5,0)
\thicklines
\put(-20,15){\circle{9}}
\put(0,15){\circle{9}}
\put(20,15){\circle{9}}
\put(80,15){\circle{9}}
\put(-21,14){$0$}
\put(-1,14){$1$}
\put(19,14){$2$}
\put(79,14){$K$}

\qbezier(-17,18)(-10,25)(-3,18)
\put(-8,21.5){\vector(1,0){0.1}}
\qbezier(3,18)(10,25)(17,18)
\put(12,21.5){\vector(1,0){0.1}}
\qbezier(23,18)(30,25)(37,18)
\put(32,21.5){\vector(1,0){0.1}}
\qbezier(63,18)(70,25)(77,18)
\put(72,21.5){\vector(1,0){0.1}}
\qbezier(-17,12)(-10,5)(-3,12)
\put(-11,8.5){\vector(-1,0){0.1}}
\qbezier(3,12)(10,5)(17,12)
\put(9,8.5){\vector(-1,0){0.1}}
\qbezier(23,12)(30,5)(37,12)
\put(29,8.5){\vector(-1,0){0.1}}
\qbezier(63,12)(70,5)(77,12)
\put(69,8.5){\vector(-1,0){0.1}}
\put(42,15){\circle*{1}}
\put(50,15){\circle*{1}}
\put(58,15){\circle*{1}}
\put(-11,23){$\lambda y$}
\put(9,23){$\lambda y$}
\put(29,23){$\lambda y$}
\put(69,23){$\lambda y$}
\put(-15,5){$\alpha(1-y)$}
\put(4,5){$\alpha(1-y)$}
\put(25,5){$\alpha(1-y)$}
\put(65,5){$\alpha(1-y)$}
\end{picture}
\caption{Transition rates of an orbit queue when the server state is $y$}
\label{fig:transition_rates_node}
\end{figure}
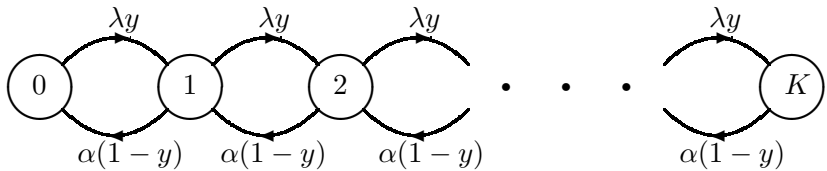

\setlength{\unitlength}{1mm}
\begin{figure}
\centering
\begin{picture}(50,25)(-30,5)
\thicklines
\put(-20,15){\circle{9}}
\put(0,15){\circle{9}}
\put(-21,16){$0$}
\put(-22,13){idle}
\put(-1,16){$1$}
\put(-3,13){busy}

\qbezier(-17,18)(-10,25)(-3,18)
\put(-8,21.5){\vector(1,0){0.1}}
\qbezier(-17,12)(-10,5)(-3,12)
\put(-11,8.5){\vector(-1,0){0.1}}
\put(-22,23){$N(\lambda + \alpha(1-\xi(0)))$}
\put(-11,5){$N$}
\end{picture}
\caption{Transition rates of the server when the empirical measure of nodes is $\xi$; $\xi(0)$ denotes the fraction of empty orbit queues}
\label{fig:transition_rates_server}
\end{figure}
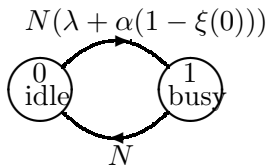

\emph{Example 2}. We now describe the setting of WLAN. Let there be $N$ nodes. Time is divided into slots. Each node has a state associated with it, which represents the probability of attempting a packet transmission in a slot. Since the network could be spread over a large geographical area, the nodes are grouped into $C$ classes; every node that belongs to a class can hear the transmissions of every other node in that class. Figure~\ref{fig:wlan} depicts an example network with $7$ nodes and $3$ classes. The interaction among the nodes comes from the distributed channel access algorithm executed by the nodes. This interaction results in the evolution of the state of each node in the following fashion: a node that incurs a collision upon a packet transmission moves to a different state with a reduced probability of attempt, and upon a successful transmission moves to another state with an increased probability of attempt. Since multiple nodes could transmit at the same slot, the channel corresponding to a class of nodes could be in three different states in a given time slot: (i) an idle slot (denoted by state $0$), (ii) a collision (state $2$) or (iii) a successful packet transmission (state $1$). We denote the channel state corresponding to each class of nodes as the environment, i.e., at each time slot, the environment is an element of $\{0,1,2\}^{C}$ with the $c$th coordinate representing the channel state of the $c$th class of nodes. Since there are $O(N)$ many nodes in each class, we see that the environment makes $O(N)$ many transitions over a given $O(1)$ time duration. Also, we see that the transition rates of the environment depend on the attempt probabilities of the nodes in that class, but only through the empirical measure of the states of the nodes in that class. On the other hand, the transition rates of the states of a node depend on the attempt probabilities of the nodes in that class (again, only through the empirical measure) as well as the environment. Hence, we have a two time scale mean-field model that describes the network, but one that operates in discrete-time. We now see how to translate this to an approximate continuous-time model.

\setlength{\unitlength}{1cm}
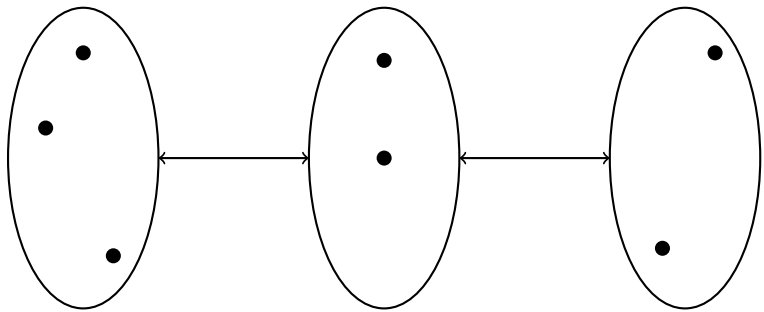
\begin{figure}
\centering
\begin{tikzpicture}
\thicklines
\draw[thick](2,0) ellipse (1 and 2);
\draw[thick](-2,0) ellipse (1 and 2);
\draw[thick](6,0) ellipse (1 and 2);
\fill (-2,1.4) circle(.1);
\fill (-1.6,-1.3) circle(.1);
\fill (-2.5,0.4) circle(.1);
\fill (2,1.3) circle(.1);
\fill (2,0) circle(.1);
\fill (6.4,1.4) circle(.1);
\fill (5.7,-1.2) circle(.1);
\draw[<->, thick] (-1,0) -- (1,0);
\draw[<->, thick] (3,0) -- (5,0);
\end{tikzpicture}
\caption{A wireless local area network with $3$ classes and $7$ users; interference among classes are indicated by arrows}
\label{fig:wlan}
\end{figure}

\setlength{\unitlength}{1mm}
\begin{figure}
\centering
\begin{picture}(50,16)(0,2)
\thicklines
\put(-20,15){\circle{9}}
\put(0,15){\circle{9}}
\put(20,15){\circle{9}}
\put(56,15){\circle{9}}
\put(-21,13.5){$0$}
\put(-1,13.5){$1$}
\put(19,13.5){$2$}
\put(55,13.5){$K$}
\put(-16,15){\vector(1,0){12}}
\put(4,15){\vector(1,0){12}}
\put(24,15){\vector(1,0){6}}
\qbezier(0,11)(-10,5)(-20,11)
\qbezier(20,11)(-5,0)(-20,11)
\qbezier(56,11)(0,-10)(-20,11)
\put(-11,8){\vector(-1,0){0.1}}
\put(-3,5.5){\vector(-1,0){0.1}}
\put(9,0.2){\vector(-1,0){0.1}}
\put(33,15){\circle*{1}}
\put(41,15){\circle*{1}}
\put(49,15){\circle*{1}}
\end{picture}
\caption{Set of allowed transitions for a particle in a WLAN}
\label{fig:transition_rates_node-wlan}
\end{figure}
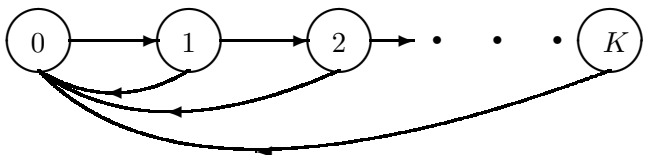

Figure~\ref{fig:transition_rates_node-wlan} depicts the set of allowed transitions of a node; in typical WLAN implementations, the most aggressive state is $0$ and the least aggressive state is $K$. A node moves from state $i$ to state $i+1$ when it incurs a collision, and moves from state $i$ to $0$ when a packet is successfully transmitted. To describe the transition rates of the continuous time model, we shall consider a scaled version of the above discrete time model where each time slot is of duration $1/N$. Let $p_i/N$ denote the attempt probability of a node in state $i$, and let $A$ denote the interference matrix among classes, specifically, $A_{c,d} = 1$ implies that a class $c$ node's transmission is interfered by a class $d$ node's transmission. Let $V_c = \{d : A_{c,d}=1\}$ denote the classes that interfere with class $c$ nodes' transmissions. Also, for each $i \in \{0, 1, \ldots, K\}$ and $c \in \{1,2,\ldots, C\}$, let $\xi^c_i$ denote the fraction of nodes (among the nodes in class $c$) in state $i$ and let $y \in \{0,1,2\}^C$ denote the state of the background process. The transition probability of tagged node in class $c$ from state $i$ to state $0$ is
\begin{align*}
& \left( \frac{p_i}{N}  \prod_{d \in V_c, d \neq c} \left[ 1_{\{y_d = 0\}} \prod_{j \in c, j \neq i} \biggr(1-\frac{p_j}{N}\biggr)\right] \right)\\
& \qquad \times \left( \prod_{d \in V_c, d \neq c} \left[ \prod_{d^\prime \in V_d}1_{\{y_{d^\prime} = 0\}} \left(\prod_{j \in d^\prime} (1-\frac{p_j}{N})\right) + \left( 1-\prod_{d^\prime \in V_d}1_{\{y_{d^\prime} = 0\}}\right) \right] \right);
\end{align*}
scaling the above by $N$ and noting that $\prod_{j \in d}(1-p_j/N) \sim \exp\{-\sum_{i=0}^K p_i \xi_i^d\}$, the corresponding transition rate of the continuous time model can be approximated as
\begin{align*}
p_i \left(\prod_{d  \in V_c} 1_{\{y_d = 0\}} \right)\times \left( \prod_{d \in V_c} \left[ \prod_{d^\prime \in V_d} 1_{\{y_{d^\prime}=0\}} \left(\exp\left\{-\sum_{i=0}^K p_i \xi^d_i\right\} -1\right) + 1 \right] \right).
\end{align*}
Similarly, the transition rate of a class $c$ node from state $i$ to state $i+1$ is
\begin{align*}
p_i \left(\prod_{d  \in V_c} 1_{\{y_d = 0\}} \right)\times \left( 1 - \prod_{d \in V_c} \left[ \prod_{d^\prime \in V_d} 1_{\{y_{d^\prime}=0\}} \left(\exp\left\{-\sum_{i=0}^K p_i \xi^d_i\right\} -1\right) + 1 \right] \right).
\end{align*}
We can also write down the transition rates of the background process; for example, a transition from the all-$0$ state to the state $y$ with $y_c = 1 $ and $y_d = 0$ for all $d \neq c$ (which happens when a node in class $c$ starts a transmission) occurs with rate
\begin{align*}
\left(N \sum_{i=0}^K p_i \xi^c_i \right) \times \exp\left\{-\sum_{i=0}^K p_i \xi^c_i\right\}.
\end{align*}
A study of the above model in the large-$N$ regime has been done by Bordenave et al.~\cite{bordenave-etal-12} towards understanding the average throughput obtained by a node in a given class, whereas our result in this paper provides a finer asymptotic analysis, in the realm of large deviations, which enables us to study metastability in such systems. 
For a continuous-time model of WLAN without a fast environment, see Boorstyn et al.~\cite{boorstyn-etal-87}.

\bibliographystyle{abbrv}
\bibliography{Report.bib}

\noindent \scriptsize{\textsc{S. Yasodharan and R. Sundaresan\newline
Department of Electrical Communication Engineering \newline
Indian Institute of Science\newline
Bangalore 560\,012, India\newline
email: sarath@iisc.ac.in, rajeshs@iisc.ac.in}}

\end{document}